\def\R{\mathbb{R}}
\def\C{\mathbb{C}}
\def\P{\mathbb{P}}
\def\E{\mathbb{E}}
\def\N{\mathbb{N}}
\def\CC{\mathcal{C}}
\renewcommand{\S}{\mathbb{S}}
\DeclareMathOperator{\Cov}{Cov}
\DeclareMathOperator{\polydim}{PolyDim}
\newtheorem{definition}{Definition}[section]
\newtheorem{theorem}{Theorem}[section]
\newtheorem{proposition}[theorem]{Proposition}
\newtheorem{lemma}[theorem]{Lemma}
\newtheorem{remark}{Remark}[section]
\newtheorem{corollary}[theorem]{Corollary}
\def\eps{\varepsilon}
\newcommand{\diam}{\text{Diam}}
\newcommand{\ang}{\text{Ang}}
\title{Simple Universal Bounds for Chebyshev-Type Quadratures}
\author{Ron Peled}
\address{Ron Peled\hfill\break
Courant Institute of Mathematical Sciences\\
251 Mercer st.\\
New York, NY 10012, USA \\}
\email{peled@cims.nyu.edu}
\urladdr{http://www.cims.nyu.edu/~peled}
\thanks{New York University. Partially completed during stay at the Institut Henri Poincare - Centre Emile Borel. Research supported by NSF Grant OISE 0730136.}
\subjclass[2000]{65D32; 41A55; 60D05.}
\keywords{Quadrature; Cubature; Chebyshev-type quadrature; Gaussian quadrature; Minimal quadrature size; Point sets on sphere; Poisson process.}
\begin{document}
\begin{abstract}
A Chebyshev-type quadrature for a probability measure $\sigma$ is a distribution which is uniform on $n$ points and has the same first $k$ moments as $\sigma$. We give an upper bound for the minimal $n$ required to achieve a given degree $k$, for $\sigma$ supported on an interval. In contrast to previous results of this type, our bound uses only simple properties of $\sigma$ and is applicable in wide generality.
We also obtain a lower bound for the required number of nodes which only uses estimates on the moments of $\sigma$.
Examples illustrating the sharpness of our bounds are given. 
As a corollary of our results, we obtain an apparently new result on the Gaussian quadrature.

In addition, we suggest another approach to bounding the minimal number of nodes required in a Chebyshev-type quadrature, utilizing a random choice of the nodes, and propose the challenge of analyzing its performance. A preliminary result in this direction is proved for the uniform measure on the cube. Finally, we apply our bounds to the construction of point sets on the sphere and cylinder which form local approximate Chebyshev-type quadratures. These results were needed recently in the context of understanding how well can a Poisson process approximate certain continuous distributions. The paper concludes with a list of open questions.

\end{abstract}
\maketitle



\begin{section}{Introduction}
A quadrature formula is a way of approximating a distribution by a set of point masses which preserves the integral of all polynomials up to a certain degree. More precisely, given an integer $k\ge 1$ and a measure $\sigma$ on $\R$ with finite first $k$ moments, a \emph{quadrature formula} of (algebraic) degree at least $k$ is a set of nodes $\{x_i\}_{i=1}^n\subset\R$ and weights $\{m_i\}_{i=1}^n\subset\R_+$ such that
\begin{equation}\label{quadrature_formula}
\int x^j d\sigma(x) = \sum_{i=1}^n m_i x_i^j
\end{equation}
for all integer $0\le j\le k$. The degree is exactly $k$ if equality does not hold when $j=k+1$. 
Such formulas have many applications in numerical analysis, classical analysis \cite{KN77}, geometry \cite{K98} and other fields. The maximal degree possible for a quadrature formula with $n$ nodes is $2n-1$ (unless $\sigma$ itself is atomic with $n$ nodes or less). This degree is attained uniquely for a distinguished formula called the \emph{Gaussian quadrature formula} whose $n$ nodes are placed at the roots of the $n$'th orthogonal polynomial corresponding to $\sigma$.

%

In this paper, we are concerned with a restricted class of quadrature formulas. We consider only probability measures $\sigma$ and restrict our formula to have all its weights equal (to $\frac{1}{n}$). Hence our formula takes the form
\begin{equation}\label{Chebyshev_type_quadrature}
\int x^j d\sigma(x) = \frac{1}{n}\sum_{i=1}^n x_i^j \qquad \text{for all $0\le j\le k$,}
\end{equation}
where the nodes $(x_i)_{i=1}^n$ need not be distinct. Such formulas are called \emph{Chebyshev-type quadrature formulas}. The special case when $n=k$ is known as Chebyshev quadratures, see the survey \cite{G75a}. There is also a multidimensional analogue of \eqref{Chebyshev_type_quadrature}, called \emph{Chebyshev-type cubatures}, when $\sigma$ is a measure on $\R^d$, $(x_i)_{i=1}^n\subseteq\R^d$ and we require that $\int p(x) d\sigma(x) = \frac{1}{n}\sum_{i=1}^n p(x_i)$ for all polynomials $p$ of degree at most $k$.
These formulas arise in various applications, such as combinatorics, statistics \cite{SZ84}, potential theory and geometry \cite{W93}. In addition, they recently proved essential to the understanding of fine properties of the gravitational allocation \cite{CPPR07,CPPR09}, where it was necessary to understand how well, and with what probability, can a Poisson process approximate a given continuous distribution.

The following questions arise naturally: Does a Chebyshev-type quadrature always exist for given $\sigma$ and $k$? How many nodes are required to achieve a given degree for such formulas?
\begin{definition}
For a probability measure $\sigma$ on $\R$ and integer $k\ge 1$, define $n^0_\sigma(k)$ to be the minimal number of nodes $n$ required in a Chebyshev-type quadrature \eqref{Chebyshev_type_quadrature} of algebraic degree at least $k$, or $\infty$ if no such quadrature exists. Define $n_\sigma(k)$ to be the minimal integer such that for any $n\ge n_\sigma(k)$ there exists a Chebyshev-type quadrature \eqref{Chebyshev_type_quadrature} of algebraic degree at least $k$ having exactly $n$ nodes, or $\infty$ if no such integer exists (see Theorem~\ref{CTQ_existence_theorem} below).
\end{definition}
Of course, we always have $n^0_\sigma(k)\le n_\sigma(k)\le\infty$ (see Theorem~\ref{second_example_theorem} for an example where they have different orders of magnitude).

The existence question for Chebyshev-type quadratures has been researched extensively and is well understood (see \cite{SZ84,R88,K93}). Results exist for more general formulas than \eqref{Chebyshev_type_quadrature}, involving more general spaces than $\R$ and more general functions than $x^j$. In the case of \eqref{Chebyshev_type_quadrature}, one has the following necessary and sufficient conditions.
\begin{theorem}\label{CTQ_existence_theorem}
 Given an integer $k\ge 1$ and a probability measure $\sigma$ on $\R$ with $\int |x|^kd\sigma(x)<\infty$. 
\begin{enumerate}
 \item If $\sigma$ is purely atomic with $m$ atoms and $k\ge 2m$ then the only quadrature formula \eqref{quadrature_formula} of degree at least $k$ for $\sigma$ is $\sigma$ itself. Thus, in this case, if $\sigma$ has an atom of irrational weight then $n^0_\sigma(k)=n_\sigma(k)=\infty$ and if all atoms of $\sigma$ have rational weights then $n_\sigma^0(k)<\infty$ and $n_\sigma(k)=\infty$.
\item If $\sigma$ either has a non-atomic component or it is purely atomic with $m$ atoms and $k<2m$, then $n_\sigma(k)<\infty$. Furthermore, in this case, there exists an $n_0\in\N$ such that for any $n\ge n_0$ there exists a Chebyshev-type quadrature formula \eqref{Chebyshev_type_quadrature} for $\sigma$ of degree at least $k$ having all \emph{distinct} nodes.

\end{enumerate}
\end{theorem}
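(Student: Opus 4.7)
\emph{Part (1)} follows from a rigidity argument. Let $\tau$ be any probability measure with $\int x^j\,d\tau = \int x^j\,d\sigma$ for $j=0,\ldots,k$, where $\sigma=\sum_{i=1}^m w_i\delta_{a_i}$. Consider the non-negative polynomial $p(x)=\prod_{i=1}^m (x-a_i)^2$, of degree $2m\le k$. Then $\int p\,d\sigma=0$, and matching moments gives $\int p\,d\tau=0$, forcing $\tau$ to be supported on $\{a_1,\ldots,a_m\}$. Writing $\tau=\sum u_i\delta_{a_i}$, the moment equations for $j=0,1,\ldots,m-1$ give a Vandermonde system $\sum_i a_i^j u_i=\sum_i a_i^j w_i$, which is invertible since the $a_i$ are distinct, so $u_i=w_i$ and $\tau=\sigma$. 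Applied to a Chebyshev-type quadrature $\tau=\frac{1}{n}\sum_{r=1}^n\delta_{x_r}$, this forces each $x_r\in\{a_i\}$ and $w_i=c_i/n$ for integers $c_i\ge 0$. The irrational/rational dichotomy is immediate: irrational weights are impossible; rational weights with common denominator $q$ force $q\mid n$, so $n^0_\sigma(k)<\infty$ but $n_\sigma(k)=\infty$ (no quadrature exists when $n$ is not a multiple of $q$).

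\emph{Part (2), existence of some quadrature.} Under the stated hypothesis, the truncated moment problem for $(m_0,\ldots,m_k)$ has a flexible (open) family of representing measures. Using standard results on truncated moment problems (see e.g.\ \cite{SZ84,K93}), one finds among them a representing measure $\tau$ with rational masses and common denominator $n_1$, yielding a Chebyshev-type quadrature with $n_1$ nodes. Let $S\subset\N$ denote the set of valid node-counts.

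\emph{Part (2), covering all large $n$.} The set $S$ is closed under addition, since the union of two quadratures with $n,n'$ nodes is a Chebyshev-type quadrature with $n+n'$ nodes. By the numerical-semigroup / Sylvester--Frobenius theorem, to conclude that $S\supseteq\{n:n\ge n_0\}$ for some $n_0$ it suffices to produce $n_1,n_2\in S$ with $\gcd(n_1,n_2)=1$. To obtain a pair of consecutive sizes $n_1,n_1+1\in S$, I would exploit the ``slack'' in the moment problem: in the non-atomic case, pick a small interval $I$ on which $\sigma$ has a density, and replace one node of a given quadrature by a carefully chosen cluster of two nodes inside $I$, choosing their positions (two free parameters) to compensate the $k+1$ moment conditions via the implicit function theorem applied to the remaining free nodes near their original positions. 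In the atomic case with $k<2m$, the analogous slack comes from the fact that $2m$ atoms support a $(2m-k-1)$-dimensional family of measures with prescribed first $k$ moments, and one can rebalance to move from $n_1$ to $n_1+1$ nodes.

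\emph{Distinct nodes.} For $n$ sufficiently large, the moment map $(x_1,\ldots,x_n)\mapsto \frac{1}{n}\bigl(\sum x_r^j\bigr)_{j=0}^k$ restricted near a generic Chebyshev-type solution is a submersion, so the fibre is a smooth $(n-k-1)$-dimensional manifold, on which configurations with coinciding nodes form a lower-dimensional subvariety. A small generic perturbation along the fibre then yields all distinct nodes.

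\emph{Main obstacle.} The crux is producing two coprime sizes in $S$, i.e.\ constructing a quadrature with exactly one more node than a given one, while preserving all $k+1$ moment identities and equal weights $1/n$ after rescaling. This perturbation is where the non-atomic component (or, in the purely atomic case, the inequality $k<2m$) must be used in an essential way; all other steps are either linear algebra or a union/perturbation argument.
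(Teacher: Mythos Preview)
Your Part~(1) is correct and essentially identical to the paper's argument.

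For Part~(2), your semigroup framework is valid (the set $S$ of admissible node-counts is indeed closed under addition), but the step you yourself flag as the ``main obstacle'' is a genuine gap, and your sketch for it does not work as stated. Going from an $n_1$-node Chebyshev-type quadrature to an $(n_1+1)$-node one is not a local perturbation: all weights change from $1/n_1$ to $1/(n_1+1)$, so every moment equation is disturbed by an amount of order $1/n_1$, and splitting one node into two gives you at most two free real parameters against $k$ perturbed equations. The implicit function theorem applied to ``the remaining free nodes near their original positions'' does not help either, because those nodes were already constrained to hit the moments exactly; moving them along the fibre of the moment map does not generate the extra degree of freedom you need to absorb a global rescaling of weights. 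In the purely atomic case your remark about a ``$(2m-k-1)$-dimensional family'' is correct but does not by itself produce two \emph{coprime} denominators.

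The paper avoids this obstacle entirely with a different mechanism. It first constructs, via moment theory, a quadrature for $\sigma$ with $2m+1$ distinct nodes and degree at least $k=2m-1$. It then invokes a theorem of Kuijlaars: around the weight vector of any quadrature with $N$ distinct nodes and degree at least $N-1$, there is a relatively open set $U$ in the probability simplex such that \emph{every} weight vector in $U$ is realised by some choice of nodes with the same moments. Since $U$ is open, for every sufficiently large $n$ it contains a point whose coordinates are all in $\frac{1}{n}\Z$; replicating nodes according to these multiplicities yields an $n$-node Chebyshev-type quadrature directly, for every large $n$, with no need to manufacture coprime sizes. A second theorem of Kuijlaars (whose content is precisely the submersion/generic-fibre argument you sketch) then makes the nodes distinct.

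So the key idea you are missing is: rather than trying to pass from $n$ to $n+1$, show that the set of admissible \emph{weight vectors} for a fixed small number of distinct nodes is open, and harvest rational points of every large denominator from that open set.
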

Although not stated explicitly, the theorem follows readily from results of Kuijlaars \cite{K93} combined with classical results in the theory of the moment problem \cite{KN77}. We prove it in Section~\ref{existence_section}.
\begin{remark}
If the support of $\sigma$ is contained in some interval $[a,b]$ then it is sometimes desirable to have a Chebyshev-type quadrature with all nodes distinct and in the open interval $(a,b)$ (see, e.g., \cite{G75a}). It is also possible to write necessary and sufficient conditions for this case, see Remark~\ref{strict_CTQ_remark} for details.
\end{remark}


%

Theorem~\ref{CTQ_existence_theorem} does not address the quantitative question of the dependence of $n_\sigma^0(k)$ and $n_\sigma(k)$ on $\sigma$ and $k$, but part 1 of it already shows that unlike the case of the ordinary quadrature \eqref{quadrature_formula}, there is no universal upper bound on $n_\sigma^0(k)$ given only in terms of $k$. Bernstein was the first to discover the surprising fact that even for very simple $\sigma$, $n_\sigma^0$ may grow super-linearly. In two papers from 1937 \cite{B37a,B37b}, he proves the following result.
\begin{theorem}\label{Bernstein_CTQ}(Bernstein)
Let $\sigma$ be the uniform distribution on $[0,1]$. Then there exist $C,c>0$ such that for all $k\ge 1$,
\begin{equation*}
ck^2\le n^0_\sigma(k)\le Ck^2.
\end{equation*}
\end{theorem}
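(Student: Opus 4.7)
The plan is to bound $n^0_\sigma(k)$ from both sides by comparing with the order-$m$ Gauss--Legendre quadrature on $[0,1]$, whose nodes $x_1<\cdots<x_m$ and positive weights $w_1,\ldots,w_m$ enjoy the classical asymptotics $x_1 \sim c/m^2$ and $w_j \sim c'\sqrt{x_j(1-x_j)}/m$; in particular $w_1 \sim c''/m^2$, and the quadrature is exact on polynomials of degree at most $2m-1$.

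For the upper bound $n^0_\sigma(k) \le Ck^2$, I would take $m = \lceil k/2\rceil + 1$, so that Gauss of order $m$ is exact on polynomials of degree $\le k$. Then I would construct a Chebyshev-type quadrature with $n = Cm^2$ nodes by placing, around each Gauss node $x_j$, a tight cluster of $n_j$ equally-weighted nodes with $\sum_j n_j = n$ and $n_j/n$ close to $w_j$. The lower bound $w_j \ge c''/m^2$ ensures that the $n_j$ can be chosen to be positive integers once $C$ is large enough. Starting from this nearly-correct configuration, a perturbation argument --- adjusting each cluster by solving for intra-cluster degrees of freedom --- makes the $k$ moment identities hold exactly. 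Concretely, a tight cluster of radius $\delta$ around $x_j$ contributes to the $p$-th moment by $\sum_l (x_j+\delta_{j,l})^p = n_j x_j^p + O(n_j\, p\, \delta)$ plus higher-order $\delta$-terms, and the residual moment discrepancies can be absorbed into small intra-cluster rearrangements, possibly combined with the existence statement Theorem~\ref{CTQ_existence_theorem}.

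For the lower bound $n^0_\sigma(k) \ge ck^2$, fix a Chebyshev-type quadrature $\{y_i\}_{i=1}^n$ of degree $k$ and set $m = \lfloor k/2\rfloor + 1$. Since both Gauss of order $m$ and the CTQ are exact on polynomials of degree $\le k$, one has the identity
\[
\sum_{j=1}^m w_j\, p(x_j) = \int_0^1 p(x)\,dx = \frac{1}{n}\sum_{i=1}^n p(y_i)
\]
for every polynomial $p$ of degree $\le k$. Applied to $p = L_1^2$, where $L_1$ is the degree-$(m-1)$ Lagrange polynomial at the Gauss nodes with $L_1(x_1) = 1$ and $L_1(x_j) = 0$ for $j\ge 2$, the left-hand side equals $w_1 \sim c/k^2$. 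If one can show that $\sum_i L_1(y_i)^2 \ge 1/2$ for every Chebyshev-type configuration, then $n \ge 1/(2w_1) \ge ck^2$.

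The main obstacle is establishing that uniform lower bound on $\sum_i L_1(y_i)^2$. The guiding intuition is that a degree-$k$ polynomial on $[0,1]$ cannot ``ignore'' intervals of length $\gtrsim 1/k^2$ near the endpoints, so some Chebyshev node $y_{i^*}$ must lie within $O(1/k^2)$ of $x_1$, giving $L_1(y_{i^*})^2 \gtrsim 1$. Making this rigorous can be done by comparing to a second test polynomial concentrated around $x_1$, or more cleanly through the logarithmic potential-theoretic framework of Kuijlaars~\cite{K93}, whose setup yields the endpoint scaling $n^0_\sigma(k) \ge ck^2$ directly from the blow-up of the equilibrium density $1/(\pi\sqrt{x(1-x)})$ at the endpoints of $[0,1]$.
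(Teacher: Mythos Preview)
The paper does not prove Theorem~\ref{Bernstein_CTQ}; it is quoted as Bernstein's result, with the lower-bound mechanism recorded separately as Theorem~\ref{Bernstein_estimate}, namely $n^0_\sigma(2m-1)\ge 1/\min(\lambda_1^{(m)},\lambda_m^{(m)})$ in terms of the extreme Gaussian weights. Your overall strategy is therefore the same as Bernstein's, but your write-up of the lower bound has a real gap, and the upper bound is left at the level of a heuristic.

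\textbf{Lower bound.} You correctly arrive at
\[
w_1=\int_0^1 L_1(x)^2\,dx=\frac{1}{n}\sum_{i=1}^n L_1(y_i)^2,
\]
and then identify the obstacle as proving $\sum_i L_1(y_i)^2\ge c$. You do not prove this; the ``potential-theoretic'' deferral to \cite{K93} is not a proof either. The missing step is short and is exactly Bernstein's trick. Set $Q(x):=(x-\xi_1)L_1(x)^2$, a polynomial of degree $2m-1\le k$. Since $Q$ vanishes at every Gauss node, $\int_0^1 Q=\sum_j w_j Q(\xi_j)=0$, hence $\sum_i Q(y_i)=0$. If all $y_i>\xi_1$ then each summand $(y_i-\xi_1)L_1(y_i)^2$ is nonnegative, and not all can vanish (else all $y_i\in\{\xi_2,\dots,\xi_m\}$, contradicting $\int \prod_{j\ge2}(x-\xi_j)^2\,d\sigma>0$), a contradiction. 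Therefore $y_{\min}\le\xi_1$. For $x\le\xi_1$ each factor $(x-\xi_j)/(\xi_1-\xi_j)\ge1$, so $L_1(y_{\min})^2\ge1$, giving $\sum_i L_1(y_i)^2\ge1$ and hence $n\ge 1/w_1\sim c k^2$. This is precisely the content of Theorem~\ref{Bernstein_estimate}. Your suggested route via ``some $y_{i^*}$ within $O(1/k^2)$ of $x_1$'' is neither necessary nor obviously true; what is needed is $y_{\min}\le\xi_1$, and it comes for free from the odd test polynomial $Q$.

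\textbf{Upper bound.} Clustering $n_j\approx n w_j$ equal-weight nodes near each Gauss node is indeed Bernstein's construction, and the lower bound $w_j\ge c/m^2$ is the correct reason one can take $n\asymp m^2$. But ``a perturbation argument \dots\ possibly combined with Theorem~\ref{CTQ_existence_theorem}'' is not a proof: Theorem~\ref{CTQ_existence_theorem} is non-quantitative, and you have $k$ moment constraints to satisfy exactly, not approximately. One clean way to close this is to place at each Gauss node $\xi_j$ a symmetric pair (or small block) of nodes $\xi_j\pm\varepsilon_j$ and solve the resulting (Vandermonde-type) system in the $\varepsilon_j$; alternatively, one can invoke an explicit inverse-function-theorem estimate of the kind proved in this paper (Proposition~\ref{perturbation_of_moments_prop}). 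As written, your upper bound is a correct outline but not a complete argument.
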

Aside from Bernstein's result, the asymptotic behavior of $n^0_\sigma$ (or $n_\sigma$) has been determined in only a few cases; most notably in \cite{K95}, where it was generalized to a subset of the Jacobi weight functions, and in \cite{K95-2}, where it was found for measures of the form $d\sigma=w(x)(1-x^2)^{-1/2}1_{x\in[-1,1]}dx$ for $w$ positive and analytic on $[-1,1]$. 
We mention briefly that some results exist also for Chebyshev-type cubatures.
There, research has mostly concentrated on the case where $\sigma$ is the area or volume measure of a certain set. See \cite{KM94} (and \cite{K06} for related ideas) for results on simple two and three dimensional shapes, and \cite{BV08,BV09} for recent progress on spherical designs, the case when $\sigma$ is the uniform measure on a sphere, a long standing open problem.

There also exist results in the literature: \cite{RB91} (inspired by \cite{R88}) and \cite{W91}, giving upper bounds on $n_\sigma(k)$ for general measures $\sigma$ in some class. However, these results require specific bounds on $\sigma$ which seem difficult to obtain for general measures. For example, the result of \cite{RB91} requires, as one of its ingredients, a lower bound on the smallest eigenvalue of the matrix $A=(a_{ij})_{i,j=1}^k$, where $a_{ij}:=\int (x^i-m_i)(x^j-m_j)d\sigma(x)$ and $m_i:=\int x^id\sigma(x)$. Moreover, the results require $\sigma$ to have a certain regularity: to be non-atomic with full support on some interval \cite{RB91}, or to have a density satisfying certain upper and lower bounds \cite{W91}.


This paper has several goals: First, to give an upper bound on $n_\sigma(k)$ which is given in terms of \emph{simple} properties of $\sigma$ (Theorems~\ref{one_dim_quadrature_thm}, \ref{abs_cont_upper_bound_thm} and \ref{large_atom_upper_bound_theorem}), requiring only an estimate on $\sigma$'s inverse modulus of continuity. Moreover, while the bound is particularly simple for absolutely continuous measures with bounded densities, it extends also to singular measures and even to purely atomic measures, provided some control over the size of the atoms is known.
We also give a lower bound on $n_\sigma^0(k)$ which only requires estimates on the $k-1$'st and $k$'th moments of $\sigma$ (Theorem~\ref{general_lower_bound}). Corresponding examples illustrate the sharpness of our bounds (Theorems~\ref{lower_bound_theorem} and \ref{second_example_theorem}). In particular, we find that for measures $\sigma$ supported on $[0,1]$ with essentially bounded density, $n_\sigma(k)$ may rise at most exponentially with $k$, and this rate of growth is attained for some $\sigma$.
As one corollary of our theorems, we obtain an apparently new result on the Gaussian quadrature (Corollary~\ref{Gaussian_quad_corollary}).

Second, to introduce the concept of \emph{random Chebyshev-type quadratures} (and its higher-dimensional analogues), where nodes are chosen by independent samples from $\sigma$ (Section~\ref{random_CTQ_sec}). We explain how this concept provides another way to upper bound $n^0_\sigma(k)$ and $n_\sigma(k)$ and propose the challenge of analyzing its performance. A preliminary result in this direction is proven, for the case that $\sigma$ is the uniform measure on a cube (Theorem~\ref{positive_density_thm}). Our analysis proceeds via a local limit theorem.

Third, to describe applications of our theorems to the construction of point sets on spheres and cylinders which are local approximate Chebyshev-type cubatures, meaning that one may partition the sphere or cylinder to small diameter sets on which the point sets are approximate Chebyshev-type cubatures (Theorems~\ref{special_sphere_cubature} and \ref{special_cubature_on_cyl_thm}). These constructions and the result for the uniform measure on the cube mentioned above, were needed recently in the study of the gravitational allocation \cite{CPPR09} in the context of understanding how well a Poisson process can approximate certain continuous distributions.

These goals are developed in the next three subsections, without proofs. Section~\ref{proofs_section} contains proofs and supplements. Section~\ref{open_questions_section} presents open questions.
 
\subsection{Simple bounds for the number of nodes}\label{universal_bound_section}
In this section, we present an upper bound on $n_\sigma(k)$ which is calculated in terms of \emph{simple} properties of $\sigma$. We also give a lower bound on $n_\sigma^0(k)$ which only requires estimates on the $k-1$'st and $k$'th moment of $\sigma$, and give examples illustrating the sharpness of our bounds.
The information about $\sigma$ we shall need for our upper bound is contained in the following function,
\begin{equation}\label{R_sigma_def}
R_\sigma(\delta):=\min\left(|x-y|\ \big|\ x,y\in\R,\ \sigma([x,y])\ge \delta\right)
\end{equation}
defined for $0<\delta<1$. $R_\sigma$ is the inverse modulus of continuity of $\sigma$; $R_\sigma(\delta)$ measures the minimal length an interval needs to have in order to have probability at least $\delta$.
\begin{theorem}\label{one_dim_quadrature_thm}
Let $\sigma$ be a probability measure with $\sigma([0,1])=1$. Fix an integer $k\ge 2$ and let
\begin{equation*}
\begin{split}
\rho&:=(k-1)R_\sigma\left(\frac{1}{k+3}\right),\\
r&:=\frac{\rho}{6(k+3)}\left(\frac{\rho}{12e}\right)^{k-1}.
\end{split}
\end{equation*}
Then for each integer $n\ge r^{-1}$ and each $p\in\R^k$ satisfying 
\begin{equation}\label{p_condition}
\left|p_j-\int x^jd\sigma(x)\right|\le r\qquad 1\le j\le k
\end{equation}
there exist (not necessarily distinct) $(x_i)_{i=1}^n\subseteq [0,1]$ satisfying
\begin{equation*}
\frac{1}{n} \sum_{i=1}^n x_i^j = p_j \quad\text{ for all integer $1\le j\le k$.}
\end{equation*}
\end{theorem}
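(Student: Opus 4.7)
The plan is to build the quadrature as a perturbation of a \emph{base configuration} that concentrates the $n$ nodes at a few reference points, one in each cell of a partition of $[0,1]$ into $k+3$ pieces. First, by the definition of $R_\sigma$, I would partition $[0,1]$ into consecutive intervals $I_1,\dots,I_{k+3}$ with $\sigma(I_j)\ge 1/(k+3)$ for each $j$; each then has length at least $R_\sigma(1/(k+3))=\rho/(k-1)$. In each $I_j$ I pick a reference point $t_j$ (say, the conditional mean of $\sigma|_{I_j}$) and an integer $n_j$ with $|n_j-n\sigma(I_j)|<1$ and $\sum_j n_j=n$; since $n\ge r^{-1}$, each $n_j\gtrsim n/(k+3)\ge 1$. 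Placing $n_j$ copies of $t_j$ as the nodes yields a base quadrature with moments $M^0_l=\tfrac1n\sum_j n_j t_j^l$ differing from $\int x^l\,d\sigma$ by $O(1/n)\ll r$, so by the hypothesis on $p$, $\|p-M^0\|_\infty=O(r)$.

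The next step is to perturb the base configuration: let the nodes concentrated at $t_j$ slide to some $x_j\in I_j$ and consider the moment map $\Phi(x_1,\dots,x_{k+3})=(\tfrac1n\sum_j n_j x_j^l)_{l=1}^k$. Freezing three of the $x_j$'s at their reference values (these ``anchors'' absorb rounding and keep the moved nodes strictly inside $[0,1]$) and letting the other $k$ vary freely, the restricted Jacobian factors as $J=\operatorname{diag}(1,\dots,k)\cdot V\cdot\operatorname{diag}(n_j/n)$ with $V_{l,j}=x_j^{l-1}$ the $k\times k$ Vandermonde matrix on the free nodes. A quantitative inverse function theorem will then produce $x\in\prod_j I_j$ with $\Phi(x)=p$, provided $\|J^{-1}\|_\infty$ is well controlled and the required perturbation fits inside the box $\prod_j I_j$ around the reference.

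The main obstacle is the sharp bound on $\|V^{-1}\|_\infty$. I would use that the rows of $V^{-1}$ are the coefficient vectors of the Lagrange interpolation polynomials $L_i(x)=\prod_{j\ne i}(x-x_j)/(x_i-x_j)$, and that any two of the free nodes are separated by at least $R_\sigma(1/(k+3))$, which gives $\prod_{j\ne i}|x_i-x_j|\ge R_\sigma(1/(k+3))^{k-1}(i-1)!(k-i)!$. The numerator coefficients are bounded by elementary symmetric polynomials of $k-1$ numbers in $[0,1]$, hence by $2^{k-1}$ in $\ell^1$. Dividing and applying Stirling to convert $(k-1)!$ into an exponential, and absorbing the $k-1$ into $\rho=(k-1)R_\sigma(1/(k+3))$, yields $\|V^{-1}\|_\infty\lesssim(Ce/\rho)^{k-1}$; combined with $\|\operatorname{diag}(n_j/n)^{-1}\|\lesssim k+3$ and the contraction factor of the inverse function theorem, the constant inflates to produce the specific $12e$ in the denominator and the $1/6(k+3)$ in front of $r$ stated in the theorem. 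The precise Lagrange interpolation estimate with explicit constant is the main piece of real work; the rest is a routine inverse function theorem setup, with the three anchor intervals providing the slack for rounding and boundary issues.
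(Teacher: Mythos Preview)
Your plan has a genuine gap in the first step. You claim that placing $n_j$ copies at the reference points $t_j$ gives base moments $M^0_l=\tfrac1n\sum_j n_j t_j^l$ within $O(1/n)$ of $\int x^l\,d\sigma$. That is false: the error decomposes as
\[
M^0_l-\int x^l\,d\sigma
=\underbrace{\sum_j\Bigl(\tfrac{n_j}{n}-\sigma(I_j)\Bigr)t_j^l}_{=O(1/n)}
\;+\;\underbrace{\sum_j\sigma(I_j)\Bigl(t_j^l-\tfrac{1}{\sigma(I_j)}\int_{I_j}x^l\,d\sigma\Bigr)}_{\text{fixed, independent of }n}.
\]
The second sum does not shrink with $n$; it is the discretization error from collapsing each cell's mass to a single point, and for $l\ge 2$ it is of order $\sum_j\sigma(I_j)\,\mathrm{diam}(I_j)$. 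Since the cells $I_j$ depend only on $\sigma$ and $k$, this term can be of order $1$ (think of a $\sigma$ with a low-density stretch so that one $I_j$ is long), and in any case it is vastly larger than $r$, which is exponentially small in $k$. Your perturbation step, being governed by $\|J^{-1}\|_\infty\approx (k+3)(12e/\rho)^{k-1}$ and a movement budget of $\mathrm{diam}(I_j)$, produces a moment ball of radius comparable to $r$ around $M^0$; it cannot absorb an $O(1)$ discrepancy.

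The paper resolves exactly this tension. It first builds a base approximation with $n$ \emph{distinct} quantile nodes $y_i$ (not $k+3$ clusters), for which one genuinely has $|\tfrac1n\sum y_i^l-\int x^l d\sigma|\le 1/n$. The price is that a single well-separated $k$-tuple among the $y_i$ only yields a correction of size $\tfrac1n\cdot(\rho/12e)^{k-1}$, far too small. The paper therefore extracts $s=\lceil n/(k+3)\rceil$ \emph{disjoint} well-separated $k$-subsets and perturbs each one (its Proposition on $T_k$, proved via an ODE flow and Gautschi's Vandermonde bound, applied $s$ times). The aggregate correction capacity is $\tfrac{s}{n}\cdot\tfrac{\rho}{3}(\rho/12e)^{k-1}\ge 2r$, which covers both the $1/n$ base error and the $r$-slack in $p$. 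Your Lagrange/Vandermonde estimate is essentially the right ingredient for the per-subset step, but the architecture around it---a $1/n$-accurate base plus many disjoint perturbation subsets---is what you are missing.
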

The theorem states that if the number of nodes $n$ is large enough with respect to $k$ and the quantity $R_\sigma\left(\frac{1}{k+3}\right)^{-1}$, then there exists a Chebyshev-type quadrature \eqref{Chebyshev_type_quadrature} having the same first $k$ moments as $\sigma$. Moreover, for each small perturbation of the moments of $\sigma$, there exists a Chebyshev-type quadrature with these perturbed moments. The theorem gives explicit bounds on $n$ and on the size of the allowed perturbation. Note that to have a non-trivial bound, we must have $R_\sigma(\frac{1}{k+3})>0$, which is equivalent to saying that $\sigma$ has no atom with mass at least $\frac{1}{k+3}$.
For generalizations of the theorem to the case of distinct nodes in $(0,1)$ and to the case of functions other than $x^j$, see Remark~\ref{main_theorem_extensions_remark}.

Of course, the most important case of the theorem is when the moments of $\sigma$ are unperturbed. In addition, in many applications, one is interested in absolutely continuous distributions with bounded densities. If the density bound is $M$, we have $R_\sigma(\delta)\ge M^{-1}\delta$ for all $\delta$. Similarly, if one considers singular $\sigma$, a typical scenario is when $R_\sigma(\delta)\ge c\delta^{\beta}$ for some $\beta> 1$. For these cases we have the following useful corollary which follows immediately from Theorem~\ref{one_dim_quadrature_thm}.
\begin{theorem}\label{abs_cont_upper_bound_thm}
Let $\sigma$ be a probability measure with $\sigma([0,1])=1$. 
\begin{enumerate}
 \item Suppose that $\sigma$ is absolutely continuous with a density which is essentially bounded by $M$. Then for each integer $k\ge 2$ we have 
\begin{equation*}
 n_\sigma(k)\le \lceil 75e^4 kM\left(12eM\right)^{(k-1)} \rceil.
\end{equation*}
\item Suppose that $R_\sigma(\delta)\ge c\delta^{\beta}$ for some $c>0, \beta\ge 1$ and all $0<\delta<1$. Then for each integer $k\ge 2$ we have
\begin{equation*}
 n_\sigma(k)\le \lceil \alpha^k(k+3)^{(\beta-1)k+1}\rceil,
\end{equation*}
where $\alpha>0$ depends only on $c$ and $\beta$.
\end{enumerate}
Furthermore, in both cases we have that all quadrature nodes lie in $[0,1]$.
\end{theorem}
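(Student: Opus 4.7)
The plan is to derive both parts of Theorem~\ref{abs_cont_upper_bound_thm} as direct corollaries of Theorem~\ref{one_dim_quadrature_thm}, which is why the statement already advertises them as immediate. In both parts I will apply Theorem~\ref{one_dim_quadrature_thm} to the unperturbed moment vector $p_j=\int x^j\,d\sigma(x)$, so the perturbation condition \eqref{p_condition} is trivially satisfied and only the bound $n\ge r^{-1}$ has to be verified. The containment of all nodes in $[0,1]$ is simply inherited from Theorem~\ref{one_dim_quadrature_thm}. Thus the entire job is to estimate $r^{-1}$ from above, which in turn reduces to an estimate of $R_\sigma\!\left(\frac{1}{k+3}\right)$ from below. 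There is no genuine obstacle beyond bookkeeping; the only care needed is to keep track of the powers of $k+3$ and $k-1$ and absorb them into the advertised form.

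For part (1), I first observe that an essentially bounded density $\le M$ implies $\sigma([x,y])\le M(y-x)$ for every interval, so that $R_\sigma(\delta)\ge \delta/M$ for all $\delta\in(0,1)$. Plugging in $\delta=1/(k+3)$ gives $\rho\ge (k-1)/(M(k+3))$ and therefore
\begin{equation*}
r^{-1}\ \le\ \frac{6(k+3)^2}{k-1}\left(\frac{k+3}{k-1}\right)^{k-1} M\,(12eM)^{k-1}.
\end{equation*}
Two elementary inequalities finish the job: $(1+4/(k-1))^{k-1}\le e^{4}$, and the polynomial bound $6(k+3)^2\le 75k(k-1)$, which is easily checked for $k=2$ (equality) and by direct comparison of leading coefficients for $k\ge 3$. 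These combine to give $r^{-1}\le 75e^{4}kM(12eM)^{k-1}$, whence $n_\sigma(k)\le\lceil 75e^4kM(12eM)^{k-1}\rceil$.

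For part (2), I substitute $R_\sigma(1/(k+3))\ge c(k+3)^{-\beta}$ into the definitions of $\rho$ and $r$, obtaining
\begin{equation*}
r^{-1}\ \le\ \frac{6(k+3)^{\beta+1}}{(k-1)c}\left(\frac{12e(k+3)^{\beta}}{(k-1)c}\right)^{k-1}.
\end{equation*}
I then collect powers of $k+3$: the exponent becomes $(\beta+1)+\beta(k-1)=(\beta-1)k+1+k$, and the factor $\left(\frac{k+3}{k-1}\right)^{k-1}\le 5^{k-1}$ (using $\frac{k+3}{k-1}\le 5$ for $k\ge 2$) absorbs cleanly into a geometric factor. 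One stray factor of $k+3$ remains after reducing the exponent to $(\beta-1)k+1$; it is absorbed into the exponential using $k+3\le e^{k}$ for $k\ge 2$. The final bound has the form $r^{-1}\le \alpha^{k}(k+3)^{(\beta-1)k+1}$ for a constant $\alpha$ depending only on $c$ and $\beta$ (indeed, $\alpha$ can be taken to depend only on $c$, e.g.\ $\alpha=60e^{2}/c$).

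In short, the only step with any content is the inequality $R_\sigma(\delta)\ge \delta/M$ in part (1), and all remaining work is mechanical simplification of the formula in Theorem~\ref{one_dim_quadrature_thm}. Thus I do not anticipate any real difficulty; the statements are deliberately packaged corollaries designed to make the consequences of the main theorem transparent for the two commonly arising regimes of absolutely continuous and Hölder-type measures.
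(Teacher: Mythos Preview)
Your proposal is correct and follows exactly the approach the paper intends: the text states that Theorem~\ref{abs_cont_upper_bound_thm} ``follows immediately from Theorem~\ref{one_dim_quadrature_thm}'', and your computations supply precisely the elementary bookkeeping (the bound $R_\sigma(\delta)\ge\delta/M$, the inequality $(1+4/(k-1))^{k-1}\le e^4$, and the check $6(k+3)^2\le 75k(k-1)$ for $k\ge 2$) needed to turn that remark into a proof. The containment of nodes in $[0,1]$ and the fact that the bound holds for every $n\ge r^{-1}$ (hence controls $n_\sigma(k)$, not just $n_\sigma^0(k)$) are inherited from Theorem~\ref{one_dim_quadrature_thm} as you say.
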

Hence, for measures with bounded densities, one needs at most an exponential number of nodes in a Chebyshev-type quadrature. A more singular measure may require even more nodes. Similar conclusions can be drawn for other measures according to which lower bound one has for $R_\sigma$. 

The previous theorems provide quantitative bounds for $n_\sigma(k)$ in the cases when $\sigma$ does not have large atoms. Can we provide similar bounds when $\sigma$ is a mixture of a large atom and a non-atomic component? or when $\sigma$ has infinitely many atoms? The following theorem does so. Define, for a probability measure $\sigma$ and $0< \eps<1$,
\begin{equation*}
\begin{split}
\sigma^t_\eps&:=\sigma - \sum_{\{x\ |\ \sigma(\{x\})>\eps\}} (\sigma(\{x\})-\eps)\delta_x\ ,\\ \sigma'_\eps&:=\frac{\sigma^t_\eps}{\sigma^t_\eps(\R)}.
\end{split}
\end{equation*}
In words, $\sigma^t_\eps$ is $\sigma$ with all its atoms truncated to mass $\eps$ and $\sigma'_\eps$ is its normalized version.
\begin{theorem}\label{large_atom_upper_bound_theorem}
Let $\sigma$ be a probability measure with $\sigma([0,1])=1$. Fix an integer $k\ge 2$ and suppose that there exists $0<\eps<1$ such that
\begin{equation}\label{eps_condition}
\frac{\eps}{\sigma^t_\eps([0,1])}<\frac{2}{2k+7}.
\end{equation}
Fix such an $\eps$ and let
\begin{equation*}
\begin{split}
\rho&:=(k-1)R_{\sigma'_{\eps}}\left(\frac{2}{2k+7}\right),\\
r&:=\frac{\rho}{6(k+3)}\left(\frac{\rho}{12e}\right)^{k-1}.
\end{split}
\end{equation*}
Then for any integer $n\ge \max\left(\frac{1}{r\sigma^t_\eps([0,1])},\frac{2k+6}{\eps}\right)$ there exist (not necessarily distinct) $(x_i)_{i=1}^n\subseteq [0,1]$ satisfying
\begin{equation*}
\frac{1}{n} \sum_{i=1}^n x_i^j = \int x^jd\sigma(x) \quad\text{ for all integer $1\le j\le k$.}
\end{equation*}
\end{theorem}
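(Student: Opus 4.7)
The plan is to place many of the $n$ nodes directly on top of the large atoms of $\sigma$ in order to absorb their excess mass, and then apply Theorem~\ref{one_dim_quadrature_thm} to an auxiliary probability measure close to $\sigma'_\eps$ to handle what is left.

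Let $y_1,\ldots,y_m$ denote the atoms of $\sigma$ with mass $w_i := \sigma(\{y_i\}) > \eps$ (so $m < 1/\eps$) and write $M := \sigma^t_\eps([0,1])$. For the given $n$, place $n^a_i := \lfloor n(w_i - \eps)\rfloor$ nodes at $y_i$ and reserve $\tilde n := n - \sum_i n^a_i$ nodes for a Chebyshev-type quadrature of the normalized residual
\[\sigma_{\text{res}} := \sigma - \sum_i \frac{n^a_i}{n}\delta_{y_i} = \sigma^t_\eps + \sum_i \frac{\{n(w_i-\eps)\}}{n}\delta_{y_i},\qquad \tilde M := \tilde n/n,\qquad \tilde\sigma := \sigma_{\text{res}}/\tilde M.\]
By construction $\tilde\sigma$ is a probability measure with $\tilde M \in [M, M + m/n]$, $\tilde n = n\tilde M$ is an integer, and the largest atom of $\tilde\sigma$ has mass at most $(\eps + 1/n)/\tilde M$. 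If Theorem~\ref{one_dim_quadrature_thm} produces $\tilde n$ nodes $(\tilde x_l) \subseteq [0,1]$ whose $j$-th empirical moment equals $\int x^j d\tilde\sigma$, juxtaposing them with $n^a_i$ copies of each $y_i$ yields $n$ nodes in $[0,1]$ whose $j$-th empirical moment equals $\frac{1}{n}\sum_i n^a_i y_i^j + \tilde M \int x^j d\tilde\sigma = \int x^j d\sigma$, as required.

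Applying Theorem~\ref{one_dim_quadrature_thm} to $\tilde\sigma$ needs two estimates. The atom bound $(\eps+1/n)/\tilde M < 1/(k+3)$ follows from $\tilde M \ge M$, the hypothesis $\eps/M < 2/(2k+7)$, and $n \ge (2k+6)/\eps$ via the identity $2/(2k+7) + 1/((k+3)(2k+7)) = 1/(k+3)$. The main obstacle is the $R$-function comparison
\[R_{\tilde\sigma}\!\bigl(1/(k+3)\bigr) \ge R_{\sigma'_\eps}\!\bigl(2/(2k+7)\bigr),\]
which ensures that the $\rho, r$ produced by Theorem~\ref{one_dim_quadrature_thm} applied to $\tilde\sigma$ dominate those in our statement. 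The plan for this: for any interval $[x,y]$ containing $m_{[x,y]}$ big atoms, the bound $\sigma_{\text{res}}([x,y]) \le M\sigma'_\eps([x,y]) + m_{[x,y]}/n$ is immediate, while the key observation is that each big atom contributes exactly $\eps$ to $\sigma^t_\eps$, so $m_{[x,y]} \le \sigma^t_\eps([x,y])/\eps = M\sigma'_\eps([x,y])/\eps$. Dividing by $\tilde M \ge M$ and using $n\eps \ge 2k+6$ then gives
\[\tilde\sigma([x,y]) \le \sigma'_\eps([x,y])\bigl(1 + 1/(n\eps)\bigr) \le \sigma'_\eps([x,y])\cdot(2k+7)/(2k+6),\]
and since $1/(k+3) = 2/(2k+6)$, any $[x,y]$ with $\tilde\sigma([x,y]) \ge 1/(k+3)$ satisfies $\sigma'_\eps([x,y]) \ge 2/(2k+7)$.

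With the comparison in hand, Theorem~\ref{one_dim_quadrature_thm} applies to $\tilde\sigma$ whenever $\tilde n$ exceeds its $1/r$ threshold, which follows from $\tilde n \ge nM \ge 1/r$, i.e.\ from $n \ge 1/(rM)$. The specific threshold $2/(2k+7)$ in the hypothesis is exactly the slack needed to absorb the rounding factor $(2k+7)/(2k+6)$ introduced by replacing $n(w_i-\eps)$ by its floor, while the second hypothesis $n \ge (2k+6)/\eps$ is what forces that factor to be sufficiently small.
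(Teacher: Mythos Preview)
Your proof is correct and follows essentially the same strategy as the paper's: place $\lfloor n(w_i-\eps)\rfloor$ nodes on the large atoms, show the normalized residual measure $\tilde\sigma$ satisfies $\tilde\sigma([x,y])\le\frac{2k+7}{2k+6}\sigma'_\eps([x,y])$ (hence $R_{\tilde\sigma}(1/(k+3))\ge R_{\sigma'_\eps}(2/(2k+7))$), and apply Theorem~\ref{one_dim_quadrature_thm} to $\tilde\sigma$ with $\tilde n\ge nM\ge 1/r$ nodes. The only cosmetic difference is that the paper truncates the slightly smaller set $A=\{x:\sigma(\{x\})>\frac{2k+7}{2k+6}\eps\}$ and obtains a pointwise sandwich $\sigma^t_\eps\le\sigma_{1,n}\le\frac{2k+7}{2k+6}\sigma^t_\eps$, whereas you truncate all atoms of mass $>\eps$ and instead use the counting bound $m_{[x,y]}\le\sigma^t_\eps([x,y])/\eps$ to reach the same interval inequality; both routes are equivalent.
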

\begin{remark}\label{atomic_theorem_remark}
 \begin{enumerate}
\item It is not difficult to see that condition $\eqref{eps_condition}$ is satisfied for any small enough $\eps>0$ if $\sigma$ has a non-atomic component or at least $k+4$ atoms.
  \item Note that the largest atom in $\sigma'_{\eps}$ has at most $\frac{\eps}{\sigma^t_{\eps}([0,1])}$ mass so that condition \eqref{eps_condition} ensures that $\rho>0$. The reason that the $\frac{1}{k+3}$ of Theorem~\ref{one_dim_quadrature_thm} is replaced by $\frac{2}{2k+7}$ and for the extra factor $\frac{2k+6}{\eps}$ in the bound on $n$ is that we may not be able to exactly truncate the atoms of $\sigma$ to probability $\eps$ using atoms of size $\frac{1}{n}$.
\item Similarly to Theorem~\ref{one_dim_quadrature_thm}, we can quantify a statement saying that for any moment vector $p$ which is close enough to the moments of $\sigma$, we can find $\{x_i\}_{i=1}^n$ with these moments.
\item The proof is based on writing $\sigma=q\sigma_1+(1-q)\sigma_2$ for probability measures $\sigma_1,\sigma_2$, where $\sigma_1$ approximates $\sigma'_{\eps}$ and $\sigma_2$ is the ``leftovers'' of the large atoms of $\sigma$. 
The approximation is chosen so that $\sigma_2$ already has atoms with rational probability, 
then Theorem~\ref{one_dim_quadrature_thm} is used to get a Chebyshev-type quadrature for $\sigma_1$. We note that this approach might yield better bounds than those of Theorem~\ref{one_dim_quadrature_thm} even for $\sigma$ which do not have large atoms. For example, if $R_\sigma(\frac{1}{k+3})$ is very small, one may try to decompose $\sigma=q\sigma_1+(1-q)\sigma_2$ so that $R_{\sigma_1}(\frac{1}{k+3})>R_{\sigma}(\frac{1}{k+3})$ and $q$ is rational with small denominator. Then approximate $\sigma_2$ in a simple manner, say as in Lemma~\ref{simple_approximation_lemma} below, and finally approximate $\sigma_1$ using Theorem~\ref{one_dim_quadrature_thm} and use the freedom in the moments afforded by \eqref{p_condition} to compensate for the errors in the moments of the approximation to $\sigma_2$.
 \end{enumerate}
\end{remark}

\subsubsection*{Lower bounds}
In this section we complement the above upper bounds for $n_\sigma(k)$ by presenting lower bounds for $n_\sigma^0(k)$ and examples illustrating the sharpness of our bounds. As a by-product of our results, we note an apparently new inequality for the Gaussian quadrature.

We start by describing a lower bound for $n_\sigma^0(k)$, for general probability measures $\sigma$, which Bernstein used in deriving Theorem~\ref{Bernstein_CTQ}.
To state it, we first recall that for $k=2m-1$ for $m\in\N$ and a probability measure $\sigma$ on $\R$ with $\int |x|^kd\sigma(x)<\infty$, unless $\sigma$ is purely atomic with less than $m$ atoms, we have the Gaussian quadrature formula with nodes $\xi_1^{(m)}<\xi_2^{(m)}<\cdots<\xi_m^{(m)}$ and weights $(\lambda_i^{(m)})_{i=1}^m$ satisfying
\begin{equation}\label{Gaussian_quadrature}
 \sum_{i=1}^m \lambda_i^{(m)}(\xi_i^{(m)})^j = \int x^jd\sigma(x)\qquad \forall\ 0\le j\le k.
\end{equation}
\begin{theorem}(Bernstein \cite{B37a})\label{Bernstein_estimate}
For a probability measure $\sigma$ and $k=2m-1$ as above, we have $n_\sigma^0(k)\ge \frac{1}{\min(\lambda_1^{(m)}, \lambda_m^{(m)})}$.
\end{theorem}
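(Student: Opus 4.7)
The plan is to exploit the fact that any Chebyshev-type quadrature $\mu_n = \frac{1}{n}\sum_{i=1}^n \delta_{x_i}$ of degree $\ge 2m-1$ agrees with $\sigma$ in integrating all polynomials of degree $\le 2m-1$, so it also agrees with the Gaussian quadrature $\sum_{i=1}^m \lambda_i^{(m)} \delta_{\xi_i^{(m)}}$ on such polynomials. The key test functions to plug in are the \emph{squared Lagrange polynomials} for the Gaussian nodes. For $1\le j\le m$, set
\[
L_j(x):=\prod_{i\ne j}\frac{x-\xi_i^{(m)}}{\xi_j^{(m)}-\xi_i^{(m)}},
\]
of degree $m-1$, so that $L_j^2$ has degree $2m-2\le k$. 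Using \eqref{Gaussian_quadrature} we have $\int L_j^2\,d\sigma=\lambda_j^{(m)}$ (the cross-terms vanish because $L_j(\xi_i^{(m)})=\delta_{ij}$), and from the Chebyshev-type quadrature \eqref{Chebyshev_type_quadrature} this equals $\frac{1}{n}\sum_{i=1}^n L_j^2(x_i)$. Hence the central identity is
\[
n\lambda_j^{(m)}=\sum_{i=1}^n L_j^2(x_i), \qquad j=1,\dots,m.
\]

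Next, I would argue that the $x_i$'s must ``bracket'' the Gaussian nodes from below (for $j=1$) and from above (for $j=m$). Note that $L_1^2(x)\ge 1$ for every $x\le \xi_1^{(m)}$: for such $x$ each factor $(x-\xi_i^{(m)})/(\xi_1^{(m)}-\xi_i^{(m)})$ has modulus at least $1$ since both numerator and denominator are negative and $|x-\xi_i^{(m)}|\ge |\xi_1^{(m)}-\xi_i^{(m)}|$. To force at least one $x_i$ into $(-\infty,\xi_1^{(m)}]$, consider the degree $2m-1$ polynomial
\[
f(x):=(x-\xi_1^{(m)})\prod_{i=2}^m(x-\xi_i^{(m)})^2,
\]
for which Gaussian quadrature gives $\int f\,d\sigma=0$ (all summands vanish), hence $\sum_{i=1}^n f(x_i)=0$. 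Since $f(x)\ge 0$ for $x\ge \xi_1^{(m)}$ and $f(x)<0$ for $x<\xi_1^{(m)}$ (away from the $\xi_i^{(m)}$), this forces either some $x_i<\xi_1^{(m)}$, or all $x_i$ lie in $\{\xi_1^{(m)},\dots,\xi_m^{(m)}\}$. In the latter case $\mu_n$ and $\sum_j\lambda_j^{(m)}\delta_{\xi_j^{(m)}}$ share their first $2m-1$ moments and are both supported on the same $m$-point set, so they must coincide; then $\lambda_1^{(m)}$ is a positive integer multiple of $1/n$, giving $\lambda_1^{(m)}\ge 1/n$ outright. In the former case, substituting this node into the central identity yields $n\lambda_1^{(m)}\ge L_1^2(x_i)\ge 1$, i.e., $\lambda_1^{(m)}\ge 1/n$.

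The symmetric argument with $L_m^2$ and $\tilde f(x):=(\xi_m^{(m)}-x)\prod_{i=1}^{m-1}(x-\xi_i^{(m)})^2$ gives $\lambda_m^{(m)}\ge 1/n$. Combining, $n\ge 1/\lambda_1^{(m)}$ and $n\ge 1/\lambda_m^{(m)}$, whence $n\ge 1/\min(\lambda_1^{(m)},\lambda_m^{(m)})$, which is the desired bound on $n_\sigma^0(k)$.

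The only mildly delicate step is the ``bracketing'' argument in the second paragraph; the alternative that all nodes $x_i$ coincide with Gaussian nodes has to be handled separately (via the uniqueness of a measure on $m$ prescribed points matching $2m-1$ moments). Everything else is a clean two-line manipulation, and the bound $L_1^2\ge 1$ outside the Gaussian nodes is immediate from the factored form of $L_1$.
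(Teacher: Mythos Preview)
Your argument is correct. The paper does not actually supply its own proof of this theorem: it is stated as Bernstein's result, with the remark that the bound ``extends to all measures'' and references to \cite{G75b,K93,KM94}, so there is no in-paper proof to compare against directly. What you have written is the classical argument, and each step checks out: the identity $\int L_j^2\,d\sigma=\lambda_j^{(m)}$ follows immediately from \eqref{Gaussian_quadrature} since $\deg L_j^2=2m-2$; the sign analysis of the degree-$(2m-1)$ polynomial $f$ forces either a node strictly below $\xi_1^{(m)}$ or all nodes on the Gaussian support (in which case Vandermonde uniqueness gives $\mu_n$ equal to the Gaussian measure, so $\lambda_1^{(m)}\in\frac{1}{n}\N$); and the elementary bound $L_1^2(x)\ge 1$ for $x\le\xi_1^{(m)}$ then finishes the job.

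It is worth noting that your approach is exactly the template the paper later adapts in its proof of Theorem~\ref{general_lower_bound}: there too one integrates a carefully chosen degree-$k$ polynomial that is nonnegative except on a half-line, forcing an extremal node, and then evaluates a second nonnegative polynomial of degree $k-1$ at that node. Your Lagrange-squared polynomials play the role of $g(x)=x^{k-1}/a^{k-1}$ there, and your $f$ plays the role of $x^{k-1}(a-x)$. So while the paper omits the proof here, your argument is entirely in line with the techniques it does present.
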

Bernstein proved this theorem in the special case of the uniform distribution on an interval, but as some authors note \cite{G75b,K93,KM94}, the bound extends to all measures. We note an immediate corollary of Theorem~\ref{abs_cont_upper_bound_thm} and Theorem~\ref{Bernstein_estimate} to an estimate on Gaussian quadratures.
\begin{corollary}\label{Gaussian_quad_corollary}
For any probability measure $\sigma$ with $\sigma([0,1])=1$ and density essentially bounded by $M$, we have for any $m\in\N$ that
\begin{equation*}
\lambda_1^{(m)}\ge\frac{1}{\lceil 75e^4 (2m-1)M\left(12eM\right)^{(2m-2)} \rceil}.
\end{equation*}
\end{corollary}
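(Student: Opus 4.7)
The plan is to apply the two previously stated results in sequence, with the parameter choice $k := 2m-1$. Since $\sigma$ is absolutely continuous on $[0,1]$ with density essentially bounded by $M$, part (1) of Theorem~\ref{abs_cont_upper_bound_thm} applies directly and gives the upper bound
\begin{equation*}
n_\sigma^0(2m-1)\le n_\sigma(2m-1)\le \lceil 75e^4(2m-1)M(12eM)^{2m-2}\rceil,
\end{equation*}
where the first inequality is just the definition of $n_\sigma^0$ versus $n_\sigma$: any particular $n$-node Chebyshev-type quadrature of degree at least $2m-1$ witnesses the existence of some such quadrature, hence bounds $n_\sigma^0$ from above.

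Next I would verify that the Gaussian quadrature \eqref{Gaussian_quadrature} with $m$ nodes makes sense for our $\sigma$, so that Theorem~\ref{Bernstein_estimate} applies. Two conditions must be checked: that $\int |x|^{2m-1}d\sigma(x)<\infty$, which is automatic because $\sigma$ is supported in $[0,1]$; and that $\sigma$ is not purely atomic with fewer than $m$ atoms, which is automatic because $\sigma$ has a density. Hence Theorem~\ref{Bernstein_estimate} yields
\begin{equation*}
\min\bigl(\lambda_1^{(m)},\,\lambda_m^{(m)}\bigr)\ge \frac{1}{n_\sigma^0(2m-1)}.
\end{equation*}

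Combining the two displays and using the trivial inequality $\lambda_1^{(m)}\ge \min(\lambda_1^{(m)},\lambda_m^{(m)})$ produces the corollary. There is no serious obstacle; the content of the corollary lies entirely in the two input theorems, and the only thing to be careful about is matching the parameter $k$ of Theorem~\ref{abs_cont_upper_bound_thm} with the index $m$ of the Gaussian quadrature through the relation $k=2m-1$, so that the exponent $k-1$ in the upper bound becomes $2m-2$ as stated.
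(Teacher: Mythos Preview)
Your proof is correct and matches exactly the paper's intended argument: the corollary is stated there as an immediate consequence of Theorem~\ref{abs_cont_upper_bound_thm} and Theorem~\ref{Bernstein_estimate}, combined via $n_\sigma^0(k)\le n_\sigma(k)$ with $k=2m-1$. The only minor point to add is that Theorem~\ref{abs_cont_upper_bound_thm} requires $k\ge 2$, so the case $m=1$ should be handled separately; this is trivial since then $\lambda_1^{(1)}=1$ while the right-hand side is at most $1$ (note $M\ge 1$ because $\int_0^1 w\le M$).
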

This estimate appears to be new and we do not know if it is simple to prove directly. Similar corollaries can be phrased for general measures using $R_\sigma$ and Theorem~\ref{one_dim_quadrature_thm}.

Theorem~\ref{Bernstein_estimate} can be quite accurate (as Theorem~\ref{Bernstein_CTQ} illustrates), however, one drawback of it is that it may be difficult to apply in specific cases since it requires knowledge of the Gaussian quadrature associated to the given measure.
We now propose a second lower bound, whose proof makes use of similar ideas to that of Theorem~\ref{Bernstein_estimate}, which has the advantage that in order to apply it, the only required information about the measure are bounds on its $(k-1)$'st and $k$'th moments.
\begin{theorem}\label{general_lower_bound}
Let $\sigma$ be a probability measure on $\R$ with $\sigma(\{0\})<1$. Then for every \emph{odd} integer $k\ge 3$ for which $\int |x|^k d\sigma(x)<\infty$, we have
\begin{equation}\label{n_sigma_0_lower_bound}
n_\sigma^0(k)\ge\frac{\left(\int x^kd\sigma(x)\right)^{k-1}}{\left(\int x^{k-1}d\sigma(x)\right)^{k}}.
\end{equation}
\end{theorem}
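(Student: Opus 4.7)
The plan is to fix a Chebyshev-type quadrature of degree at least $k$ with $n := n_\sigma^0(k)$ nodes $x_1,\dots,x_n\in\R$ (assuming $n_\sigma^0(k)<\infty$, else the statement is trivial) and bound $n$ from below using the moment identities $\int x^j d\sigma = \frac{1}{n}\sum_i x_i^j$, valid for every $0\le j\le k$. Let $M_j$ denote the common value. The goal reduces to showing $M_k^{k-1}\le n\,M_{k-1}^k$.

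The argument rests on two elementary estimates involving the largest node magnitude $M := \max_i |x_i|$, both exploiting that $k$ is odd, so $k-1$ is even and $x_i^{k-1}=|x_i|^{k-1}\ge 0$. First, by the triangle inequality and the bound $|x_i|\le M$,
\begin{equation*}
|M_k|\;=\;\Bigl|\tfrac{1}{n}\sum_i x_i^k\Bigr|\;\le\;\tfrac{1}{n}\sum_i |x_i|\cdot|x_i|^{k-1}\;\le\;M\cdot M_{k-1}.
\end{equation*}
Second, since every summand in $\sum_i |x_i|^{k-1}$ is nonnegative and at least one equals $M^{k-1}$, we have $M_{k-1}\ge M^{k-1}/n$.

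The hypothesis $\sigma(\{0\})<1$ guarantees $M_{k-1}>0$, so the second estimate rearranges to $M\le (nM_{k-1})^{1/(k-1)}$. Substituting this into the first gives $|M_k|\le n^{1/(k-1)}M_{k-1}^{k/(k-1)}$, and raising both sides to the $(k-1)$-st power — using once more that $k-1$ is even, so that $M_k^{k-1}=|M_k|^{k-1}$ — yields $M_k^{k-1}\le n\,M_{k-1}^k$, which is \eqref{n_sigma_0_lower_bound}.

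There is no serious obstacle: the proof is short and amounts to assembling two almost trivial inequalities, in the spirit of Bernstein's argument in that it controls the quadrature sum by the largest node. The only subtle point is sign-bookkeeping, which is precisely why the hypothesis restricts to odd $k$: with $k-1$ even, $M_{k-1}$ is an honest nonnegative quantity comparable to $M^{k-1}/n$, and raising the intermediate bound on $|M_k|$ to the power $k-1$ produces a sign-free statement about $M_k^{k-1}$ valid even when $M_k$ is negative. For even $k$ this line of argument breaks down since $M_{k-1}$ may change sign and the lower bound on $M$ is no longer automatic.
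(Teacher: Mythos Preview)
Your proof is correct and follows the same skeleton as the paper's: bound the largest node from below by the moment ratio $|m_k|/m_{k-1}$, then use the $(k-1)$-st moment identity to convert this into a lower bound on $n$. The paper carries this out slightly differently: it first reduces to $m_k>0$ by reflecting $\sigma$ through the origin, then sets $a=m_k/m_{k-1}$ and integrates the test polynomial $f(x)=x^{k-1}(a-x)$ against the quadrature to force $\xi:=\max_i x_i\ge a$, and finally integrates $g(x)=x^{k-1}/a^{k-1}$ to get $1/n\le m_{k-1}^k/m_k^{k-1}$. Your route is a mild streamlining: by working with $M=\max_i|x_i|$ and the triangle inequality $|m_k|\le M\,m_{k-1}$, you obtain the same node bound without the sign reduction and without introducing $f$, and your second estimate $m_{k-1}\ge M^{k-1}/n$ is exactly the paper's use of $g$ in different clothing. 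Both arguments hinge on the same two facts and the same parity observation; yours is just packaged more directly.
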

We remark that the lower bound given by the above theorem changes, in general, when replacing $\sigma$ by a translate of it. Hence, one may wish to optimize the amount by which to translate $\sigma$ before applying the bound. To keep the theorem as simple as possible, we avoid making this optimization here.

Theorem~\ref{general_lower_bound} implies that, for example, to obtain a lower bound on $n_\sigma^0(k)$ for probability measures supported on $[0,1]$ (which are not $\delta_0$), it is sufficient to have a lower bound on the absolute value of the $k$'th moment of $\sigma$ and an upper bound on the $(k-1)$'st moment of $\sigma$. The following corollary is proved via this technique. It illustrates that the upper bound given by Theorem~\ref{abs_cont_upper_bound_thm} and the lower bound given by Theorem~\ref{general_lower_bound} may be close in specific examples.
\begin{corollary}\label{lower_bound_theorem}
 There exists $C>0$ such that for every odd integer $k>C$, there exists a probability measure $\sigma_k$ on $[0,1]$, absolutely continuous with density essentially bounded above by $Ck$, satisfying
\begin{equation*}
 n^0_{\sigma_k}(k)\ge \frac{1}{2\sqrt{k}}\left(\frac{e}{2}\right)^{k}.
\end{equation*}
\end{corollary}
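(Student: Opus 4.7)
The plan is to exhibit the measure $\sigma_k$ explicitly and apply Theorem~\ref{general_lower_bound}. I take $\sigma_k$ to be the $\mathrm{Beta}(1,k)$ distribution on $[0,1]$, i.e., the probability measure with density $f_k(x)=k(1-x)^{k-1}$. This density attains its maximum $k$ at $x=0$, so it is essentially bounded above by $k$; this suffices with $C=1$ in the corollary (I can later enlarge $C$ so that it also serves as the threshold on $k$).

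The moments of $\sigma_k$ are immediate from the Beta integral: for every integer $j \geq 0$,
$$m_j \;:=\; \int_0^1 x^j f_k(x)\,dx \;=\; k\, B(j+1,k) \;=\; \frac{k!\,j!}{(j+k)!} \;=\; \binom{j+k}{k}^{-1}.$$
In particular $m_k = 1/\binom{2k}{k}$, and using $\binom{2k-1}{k-1}=\binom{2k}{k}/2$ I obtain $m_{k-1}=2/\binom{2k}{k}$. Since $\sigma_k$ is absolutely continuous (hence not the point mass at $0$) and $k$ is odd, Theorem~\ref{general_lower_bound} applies and gives
$$n^0_{\sigma_k}(k) \;\geq\; \frac{m_k^{k-1}}{m_{k-1}^k} \;=\; \frac{\binom{2k}{k}}{2^k}.$$

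It remains to check that $\binom{2k}{k}/2^k \geq (e/2)^k/(2\sqrt k)$ for all sufficiently large $k$, equivalently that $\binom{2k}{k} \geq e^k/(2\sqrt k)$. By Stirling's formula, $\binom{2k}{k} \sim 4^k/\sqrt{\pi k}$, so the inequality reduces to $(4/e)^k \geq \sqrt{\pi}/2$ (up to a factor tending to $1$), which certainly holds for all $k$ beyond some absolute threshold since $4/e > 1$. Absorbing this threshold into $C$ completes the proof. No essential obstacle arises here: the whole argument is a direct moment computation for an elementary measure followed by a standard Stirling estimate.
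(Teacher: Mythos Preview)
Your proof is correct. The moment computation for $\mathrm{Beta}(1,k)$ is accurate, the application of Theorem~\ref{general_lower_bound} is legitimate since $\sigma_k$ is absolutely continuous and $k$ is odd, and the final Stirling estimate is routine.

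Your route is genuinely different from the paper's, and cleaner. The paper constructs $\sigma_k$ by truncating the exponential distribution to $[0,2k]$ and rescaling to $[0,1]$. Because of the truncation, the moments are no longer exact factorials; the paper has to prove the two-sided bound $\tfrac{c_k}{2}j!\le \int x^j\,d\sigma_k'\le c_k j!$ for $j=k-1,k$ via a separate tail estimate $\int_{2k}^\infty x^j e^{-x}\,dx\le j!/2$, and only then feed the resulting moment ratio into Theorem~\ref{general_lower_bound} and Stirling. Your choice of $\mathrm{Beta}(1,k)$ sidesteps all of this: the moments are exactly $\binom{j+k}{k}^{-1}$, the ratio $m_k^{k-1}/m_{k-1}^k$ collapses to $\binom{2k}{k}/2^k$ in one line, and the density bound is exactly $k$ rather than $Ck$ for some unspecified $C$. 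The paper's construction does have the mild conceptual advantage that it makes transparent \emph{why} the bound comes out as roughly $(e/2)^k$ --- it is essentially $k^{k-1}/(k-1)!$ from the exponential moments --- but your argument achieves the same conclusion with less work.
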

In fact, the measure $\sigma_k$ constructed in this corollary is simply the exponential distribution, properly truncated and rescaled. Note also that, since $n_\sigma^0(k)$ is non-decreasing in $k$ for any measure $\sigma$, the corollary implies a similar bound for even integers $k$.

Let us compare the lower bound of Corollary~\ref{lower_bound_theorem} with the upper bound on $n_\sigma(k)$ given by Theorem~\ref{abs_cont_upper_bound_thm}. Since the density of $\sigma_k$ is bounded by $Ck$ for some $C>0$, Theorem~\ref{abs_cont_upper_bound_thm} gives
\begin{equation*}
n_{\sigma_k}^0(k)\le n_{\sigma_k}(k)\le (C'k)^k
\end{equation*}
for some $C'>0$, which differs from the bound of Corollary~\ref{lower_bound_theorem} by a $\log k$ factor in the exponent. Seeking to have an example on which the upper bound of Theorem~\ref{abs_cont_upper_bound_thm} is sharp, up to the constants involved, we introduce the following second example.
We set $d_n(\sigma)$, for $n\in\N$ and a probability measure $\sigma$ on $\R$ with all moments finite, to be the maximal possible degree of accuracy for a Chebyshev-type quadrature for $\sigma$ having exactly $n$ (not necessarily distinct) nodes (or $\infty$ if any degree of accuracy can be attained).

\begin{theorem}\label{second_example_theorem}
Let $\sigma_0$ be the probability measure having density
\begin{equation*}
w(x):=\begin{cases}
1& x\in [-1,-\frac{1}{2}]\cup[\frac{1}{2},1]\\
0& \text{otherwise}
\end{cases}.
\end{equation*}
In other words, $\sigma_0$ is the uniform distribution on the set $[-1,-\frac{1}{2}]\cup[\frac{1}{2},1]$. Then there exist $C,c>0$ such that 
\begin{align}
&d_n(\sigma_0)\ge c\sqrt{n}\qquad\qquad\text{for even $n$,}\label{even_n_bound}\\
&d_n(\sigma_0)\le C\ln(Cn)\qquad\text{for odd $n$.}\label{odd_n_bound}
\end{align}
In particular, there exist $C_1,c_1>0$ such that for every $k\in\N$, we have
\begin{align}
&n_{\sigma_0}^0(k)\le C_1k^2\qquad\text{and}\label{example_n_sigma_0_bound}\\
&n_{\sigma_0}(k)\ge c_1e^{c_1k}.\label{example_n_sigma_bound}
\end{align}
\end{theorem}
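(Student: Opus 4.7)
The plan exploits the symmetry of $\sigma_0$ about the origin together with the gap $(-1/2,1/2)$ in its support. For even $n$ the symmetry lets us pair nodes as $\pm$, reducing the problem to a tame one-interval quadrature; for odd $n$ an ``unpaired'' node is forced into or very close to the gap, where an explicit Chebyshev polynomial grows exponentially.

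For \eqref{even_n_bound}, write $n=2m$ and look for a symmetric quadrature $\{\pm y_1,\ldots,\pm y_m\}$ with $y_\ell\in[1/2,1]$. Odd moments of $\sigma_0$ vanish and are matched for free; matching the even ones becomes
\[
\frac{1}{m}\sum_{\ell=1}^m y_\ell^{2j}=\int y^{2j}\,d\tau(y)\qquad(1\le j\le\lfloor k/2\rfloor),
\]
where $\tau$ is uniform on $[1/2,1]$. Any Chebyshev-type quadrature for $\tau$ with $m$ nodes of degree $2K$ furnishes this for $k=2K$, so the Bernstein--Kuijlaars quadratic upper bound $n_\tau(K)\le CK^2$ (Theorem~\ref{Bernstein_CTQ}, \cite{K93}) produces such a quadrature whenever $m\ge C(2K)^2$, giving $d_{2m}(\sigma_0)\ge 2K+1\ge c\sqrt{n}$. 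Statement \eqref{example_n_sigma_0_bound} then follows by taking the smallest even $n$ with $c\sqrt{n}\ge k$.

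For \eqref{odd_n_bound}, suppose $\{x_i\}_{i=1}^n$ is a degree-$k$ Chebyshev-type quadrature of $\sigma_0$ with $n$ odd. Since every odd moment of $\sigma_0$ vanishes, $\sum_i x_i^{2j+1}=0$ for all $2j+1\le k$, and Newton's identities translate this into $e_1=e_3=\cdots=e_{\min(k,n)}=0$ for the elementary symmetric polynomials of $\{x_i\}$. When $k\ge n$, every odd-indexed $e_j$ up to $e_n$ vanishes, so $\prod_i(z-x_i)$ equals $z$ times a polynomial in $z^2$: one node must be $0$ and the rest come in $\pm$ pairs. Consider now the non-negative polynomial
\[
\Phi_K(x):=T_K\!\Bigl(\tfrac{8x^2-5}{3}\Bigr)^{\!2}
\]
of degree $4K$, where $T_K$ is the classical Chebyshev polynomial and the affine map $s\mapsto(8s-5)/3$ carries $[1/4,1]$ (the image of $\operatorname{supp}(\sigma_0)$ under $x\mapsto x^2$) onto $[-1,1]$. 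Then $0\le\Phi_K\le 1$ on $\operatorname{supp}(\sigma_0)$, whereas $\Phi_K(0)=T_K(-5/3)^2=\bigl(\tfrac{3^K+3^{-K}}{2}\bigr)^{\!2}\ge 9^K/4$. For $4K\le k$ the quadrature identity applied to $\Phi_K\ge 0$ yields
\[
\tfrac{9^K}{4}\le\Phi_K(0)\le\sum_{i=1}^n\Phi_K(x_i)=n\!\int\Phi_K\,d\sigma_0\le n,
\]
so $K\le C\log n$ and $k\le 4K+3\le C'\ln(Cn)$. The subcase $k<n$ is handled by the same polynomial together with a perturbation argument: the even-power part of $\prod_i(z-x_i)$ has degree at most $n-k-1$, so $\prod_i(z-x_i)$ differs from its odd part $z\,\tilde R(z^2)$ by a polynomial of small degree, which forces some root $x_{i_0}$ to lie within $e^{-ck}$ of $0$; the moment control $\sum_i x_i^{2K}=O(n/K)$ then ensures that nodes far outside $[-1,1]$ are too few to cancel the exponentially large value $\Phi_K(x_{i_0})$.

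Statement \eqref{example_n_sigma_bound} is immediate from \eqref{odd_n_bound}: every $n\ge n_{\sigma_0}(k)$ admits a degree-$k$ quadrature, in particular the smallest odd $n\ge n_{\sigma_0}(k)$, and \eqref{odd_n_bound} applied there gives $k\le C\ln(C(n_{\sigma_0}(k)+1))$, rearranging to $n_{\sigma_0}(k)\ge c_1 e^{c_1 k}$. The main obstacle is the perturbative subcase $k<n$ of \eqref{odd_n_bound}: quantitatively controlling both the near-zero root of $\prod_i(z-x_i)$ and the contributions of any outlier nodes outside $[-1,1]$ via the $2K$-th moment bound is the delicate quantitative step.
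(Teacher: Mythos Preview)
Your treatment of the even-$n$ bound \eqref{even_n_bound} is correct and matches the paper's approach: both reduce, via the $\pm$ symmetry, to Bernstein's quadratic bound for the uniform measure on a single interval.

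For the odd-$n$ bound \eqref{odd_n_bound}, your Chebyshev-polynomial construction $\Phi_K(x)=T_K\bigl((8x^2-5)/3\bigr)^2$ is exactly the right ingredient (it is essentially the content of the Peherstorfer result the paper quotes), and your Newton-identity argument correctly shows that when $k\ge n$ some node must sit at $0$. The problem is that this subcase is essentially vacuous: combining $k\ge n$ with the conclusion $k\le C\ln(Cn)$ forces $n$ to be bounded, so the entire content of \eqref{odd_n_bound} lies in the regime $k<n$. There your argument is only a sketch, and the sketch does not go through as stated. Knowing that $e_1=e_3=\cdots=e_{k'}=0$ (with $k'$ the largest odd integer $\le k$) tells you that the even-degree part of $\prod_i(z-x_i)$ has degree at most $n-k'-2$, but ``small degree'' here means degree roughly $n-k$, which is \emph{large} precisely when $k\ll n$; there is no reason this forces a root within $e^{-ck}$ of the origin. (For instance, $z^n+\varepsilon$ with $n$ odd differs from the odd polynomial $z^n$ by a degree-$0$ term, yet its real root is at $-\varepsilon^{1/n}$, not $\varepsilon$.) You flag this yourself as ``the main obstacle,'' but it is not a technicality to be filled in later: without it the proof of \eqref{odd_n_bound} is missing its principal case.

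The paper sidesteps this difficulty entirely. Rather than trying to locate a near-zero node in an \emph{arbitrary} odd-$n$ quadrature, it invokes a theorem of F\"orster--Ostermeyer: for a symmetric weight such as $w$, the maximal degree $d_n(\sigma_0)$ is always attained by a \emph{symmetric} node set. For odd $n$ a symmetric node set necessarily contains $0$, and then your $\Phi_K$ argument (equivalently, Peherstorfer's theorem) finishes immediately. So the missing idea is the reduction to symmetric quadratures; once you have that, no perturbation analysis is needed.
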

Thus the theorem shows the surprising fact that for the uniform distribution on two disjoint intervals $\sigma_0$, $d_n(\sigma_0)$ has completely different orders of magnitude for odd and even $n$. It also shows that the upper bound of Theorem~\ref{abs_cont_upper_bound_thm} can be attained, up to the constants involved.
Our proof of this theorem uses general theorems of Peherstorfer \cite{P90} and F\"orster \& Ostermeyer \cite{FO86} which, when taken together, show that $d_n(\sigma)$ may rise at most logarithmically in $n$ for odd $n$, whenever $\sigma$ is a symmetric measure having $0$ outside its support. We remark also that the phenomena that $d_n(\sigma)$ may have very different orders of magnitude for odd and even $n$, was first discovered, in a particular case, by F\"orster \cite{F86}.

\subsection{Random Chebyshev-type cubatures}\label{random_CTQ_sec}
We give the name \emph{random} \linebreak Chebyshev-type cubature to the situation in which we would like to approximate the moments of a measure $\sigma$ by the moments of a uniform distribution on $n$ points (as in ordinary Chebyshev-type cubatures), but do not choose the position of the points, instead, the points are chosen randomly according to independent samples from $\sigma$. In such a situation, it is natural to ask how small is the probability that the moments of the random measure approximate the moments of $\sigma$ very well. In general, this is a question about a \emph{small ball probability}. 

As we shall see, this notion gives another way to prove existence of Chebyshev-type quadratures and cubatures and we believe that it deserves better study. In addition, in the analysis of \cite{CPPR09}, it arose naturally in the context of understanding how well a Poisson process approximates Lebesgue measure. 



To formalize the above, fix $d\ge 1$ and define for $k\ge 1$, $\polydim(k,d):=\binom{k+d}{d}-1$. Then define the moment map $P_k^d:\R^d\to\R^{\polydim(k,d)}$ by
\begin{equation}\label{moment_map_def}
P_k^d(x):=(x^{\alpha})_\alpha
\end{equation}
where $\alpha$ runs over all multi-indices with $0<|\alpha|\le k$, where we mean that $\alpha\in(\N\cup \{0\})^d$, $x^\alpha:=\prod_{i=1}^d x_i^{\alpha_i}$ and $|\alpha|:=\sum_{i=1}^d\alpha_i$.

Given a probability measure $\sigma$ with support in $\R^d$, let $M_k(\sigma)$ denote its vector of multi-moments of degree at most $k$. That is,
\begin{equation*}
 M_k(\sigma):=\left(\int x^\alpha d\sigma(x)\right)_{0<|\alpha|\le k}.
\end{equation*}
For $n\ge 1$, consider the following random measure
\begin{equation*}
 \sigma_n:=\frac{1}{n}\sum_{i=1}^n \delta_{x_i}
\end{equation*}
where the $\{x_i\}_{i=1}^n$ are chosen independently from the distribution $\sigma$. Note that $M_k(\sigma_n)=\frac{1}{n}\sum_{i=1}^n P_k^d(x_i)$. It follows that $\E M_k(\sigma_n)=M_k(\sigma)$. Still, the moments of $\sigma_n$ typically do not approximate well the moments of $\sigma$. Indeed, by the central limit theorem, the difference $|M_k(\sigma_n)_\alpha - M_k(\sigma)_\alpha|$ scales like $\frac{1}{\sqrt{n}}$ for any fixed $\alpha$ (if $\sigma$ has moments of any order, say). We are interested in the probability that this difference is much smaller. More precisely, let
\begin{equation*}
 p_{n,k,\eps}(\sigma):=\P\left(\Vert M_k(\sigma_n) - M_k(\sigma)\Vert_\infty\le \frac{\eps}{\sqrt{n}}\right).
\end{equation*}
This is the \emph{small ball probability} for the random vector $M_k(\sigma_n)$.
We would like to understand how it scales for a fixed $n$ as $\eps$ tends to $0$. The following lemma connects this probability to the existence of Chebyshev-type cubatures.
\begin{lemma}\label{small_ball_to_cubature_lemma}
If for some $n,k\ge 1$ and every $\eps>0$ we have $p_{n,k,\eps}(\sigma)>0$ then there exists a Chebyshev-type cubature for $\sigma$ of degree at least $k$ having exactly $n$ (not necessarily distinct) nodes.
\end{lemma}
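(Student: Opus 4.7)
The plan is to re-interpret the small ball hypothesis as a statement about the support of a pushforward measure, and then to extract a deterministic node configuration by a compactness argument.

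Let $F\colon(\R^d)^n\to\R^{\polydim(k,d)}$ be the continuous moment map
\[
F(x_1,\dots,x_n):=\frac{1}{n}\sum_{i=1}^n P_k^d(x_i),
\]
so that, with $X_1,\dots,X_n$ iid $\sim\sigma$, we have $M_k(\sigma_n)=F(X_1,\dots,X_n)$, and the law of $M_k(\sigma_n)$ is the pushforward $F_*(\sigma^{\otimes n})$. I would first invoke the standard fact that for any continuous $F$ and probability measure $\mu$, $\operatorname{supp}(F_*\mu)=\overline{F(\operatorname{supp}(\mu))}$, applied here with $\operatorname{supp}(\sigma^{\otimes n})=\operatorname{supp}(\sigma)^n$. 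The hypothesis that $p_{n,k,\eps}(\sigma)>0$ for every $\eps>0$ says exactly that every open neighborhood of $M_k(\sigma)$ has positive probability under $F_*(\sigma^{\otimes n})$, i.e.\ $M_k(\sigma)\in\operatorname{supp}(F_*(\sigma^{\otimes n}))$. Combining, there exist tuples $\vec{x}^{(m)}=(x_1^{(m)},\dots,x_n^{(m)})\in\operatorname{supp}(\sigma)^n$ with $F(\vec{x}^{(m)})\to M_k(\sigma)$ as $m\to\infty$.

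It remains to promote this convergence to an exact equality. For $k\ge 2$, the image vector $F(\vec{x})$ has among its coordinates the pure second moments $\frac{1}{n}\sum_{i=1}^n(x_i)_j^2$ for $j=1,\dots,d$, corresponding to multi-indices $\alpha=2e_j$ (with $|\alpha|=2\le k$). Since the sequence $F(\vec{x}^{(m)})$ is convergent, these second-moment coordinates stay bounded in $m$, which forces each $|(x_i^{(m)})_j|$ to remain bounded, so the tuples $\vec{x}^{(m)}$ lie in a bounded subset of $(\R^d)^n$. Bolzano--Weierstrass then yields a convergent subsequence $\vec{x}^{(m_l)}\to\vec{x}^*$, and continuity of $F$ gives $F(\vec{x}^*)=M_k(\sigma)$; the coordinates of $\vec{x}^*$ form the desired Chebyshev-type cubature of degree at least $k$ for $\sigma$ with exactly $n$ (not necessarily distinct) nodes. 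The remaining case $k=1$ is trivial: placing all $n$ nodes at $M_1(\sigma)$ already matches the first moments.

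The only point needing care is ensuring that the near-optimal node configurations $\vec{x}^{(m)}$ do not escape to infinity along the approximating sequence; this is handled by the second-moment observation above. The rest of the argument is routine manipulation of supports and a standard compactness extraction.
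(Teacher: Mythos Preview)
Your proof is correct and follows essentially the same route as the paper: extract an approximating sequence of node configurations from the positive-probability hypothesis, use the pure second moments (available since $k\ge 2$) to confine the configurations to a bounded set, pass to a convergent subsequence, and dispose of $k=1$ trivially. Your detour through $\operatorname{supp}(F_*\mu)=\overline{F(\operatorname{supp}\mu)}$ is a harmless repackaging of the paper's direct step ``$p_{n,k,\eps_j}>0$, so pick a realization in the event''; it is not needed, but it is not wrong either.
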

Thus understanding $p_{n,k,\eps}$ provides a different way to show existence of \linebreak Chebyshev-type cubatures (and to prove lower bounds for $n_\sigma^0$). We propose the challenge of bounding, in specific examples, the minimal $n$ for which the condition of the lemma is satisfied and seeing if this approach may improve known bounds. The most interesting case in this respect is that of spherical designs, when $\sigma$ is the uniform measure on the sphere $\S^{d-1}$, but one may start by checking what bound is achieved for the interval and comparing it with Theorem~\ref{Bernstein_CTQ}. In this paper, we content ourselves with a small step in this direction (which, however, already takes some work to prove) by showing that the condition of the lemma is satisfied, for large enough $n$, when $\sigma$ is the uniform measure on the cube $[-1,1]^d$. This is achieved by showing that, for large enough $n$, the random vector $M_k(\sigma_n)$ has a positive density at $M_k(\sigma)$. Unfortunately, our result does not provide quantitative bounds for $n$.

\begin{theorem} \label{positive_density_thm}
Fix $k\ge 1$ and let $(X_i)_{i=1}^\infty$ be an IID sequence of RV's uniform on $[-1,1]^d$. Let $M_i:=P_k^d(X_i)$ and $\bar{S}_n:=\frac{1}{\sqrt{n}}\sum_{i=1}^n (M_i-\E M_1)$. Then there exists $N_0=N_0(k,d)>0$, $a=a(k,d)>0$ and $t=t(k,d)>0$ such that for all $n>N_0$, $\bar{S}_n$ is absolutely continuous with respect to Lebesgue measure in $\R^{\polydim(k,d)}$ and its density $f_n(x)$ satisfies $f_n(x)\ge a$ for $|x|\le t$.
\end{theorem}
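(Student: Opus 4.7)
The plan is to invoke a multidimensional local central limit theorem (LCLT), after establishing that the distribution of $M_1$ becomes absolutely continuous on $\R^D$ (with $D := \polydim(k,d)$) after sufficiently many convolutions. First, I would verify the covariance matrix $\Sigma := \Cov(M_1)$ is strictly positive definite: the monomials $\{x^\alpha : 0 < |\alpha| \leq k\}$ are linearly independent as polynomials, and the uniform measure on $[-1,1]^d$ has full support, so any nontrivial linear combination $\sum_\alpha v_\alpha x^\alpha$ is a non-constant function and hence has positive variance.

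The crux is to show that for some $m_0 = m_0(k,d)$, the distribution of $T_{m_0} := \sum_{i=1}^{m_0} M_i$ has an absolutely continuous component with respect to Lebesgue measure on $\R^D$. Consider the map $F: ([-1,1]^d)^{m_0} \to \R^D$ defined by $F(x_1,\ldots,x_{m_0}) := \sum_i P_k^d(x_i)$, whose differential at $(x_1,\ldots,x_{m_0})$ is the $D \times (dm_0)$ block matrix $[J(x_1) \mid \cdots \mid J(x_{m_0})]$, where $J(x)$ is the Jacobian of $P_k^d$ at $x$. A vector $v \in \R^D$ orthogonal to every column forces $\nabla Q(x_i) = 0$ for all $i$, where $Q(x) := \sum_\alpha v_\alpha x^\alpha$. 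Since each $\partial_j Q$ lies in the finite-dimensional space of polynomials of degree at most $k-1$, I can pick $m_0$ larger than $\binom{k+d-1}{d}$ and choose $x_1,\ldots,x_{m_0} \in [-1,1]^d$ in sufficiently general position so that simultaneous vanishing of every $\partial_j Q$ at these points forces $\partial_j Q \equiv 0$, whence $Q$ is constant and, having no constant term, identically zero; therefore $v = 0$. Hence $F$ is a submersion on some open neighborhood $U$ of this configuration, and a standard coarea/change-of-variables argument shows that the pushforward of the uniform measure on $U$ under $F$ has a density on the open set $F(U)\subset\R^D$. This produces an absolutely continuous component in the law of $T_{m_0}$.

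With these two ingredients in hand, a standard multivariate LCLT for IID sums under the hypothesis that some convolution power of the step distribution is absolutely continuous (see, e.g., Bhattacharya and Rao's theorems for densities of normalized sums) applies to the bounded sequence $M_i - \E M_1$. The conclusion is that, for all $n$ sufficiently large, $\bar S_n$ has a density $f_n$, and
\[
\sup_{x \in \R^D} \left| f_n(x) - \varphi_\Sigma(x) \right| \xrightarrow[n \to \infty]{} 0,
\]
where $\varphi_\Sigma$ is the centered Gaussian density on $\R^D$ with covariance $\Sigma$. Since $\Sigma$ is positive definite, $\varphi_\Sigma$ is continuous and strictly positive at $0$, so there exist $t > 0$ and $a > 0$ with $\varphi_\Sigma(x) \geq 2a$ on $\{|x| \leq t\}$; by the uniform convergence, $f_n(x) \geq a$ on the same ball for all $n > N_0$, which is the desired statement.

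The main obstacle is the smoothing-by-convolution step. The law of $M_1$ is concentrated on the $d$-dimensional manifold $P_k^d([-1,1]^d) \subset \R^D$, which is a set of Lebesgue measure zero whenever $d < D$, and in particular Cram\'er's condition fails for $M_1$; one must therefore work to confirm that a finite number of convolutions restore absolute continuity in the full ambient space $\R^D$. The Jacobian and general-position argument sketched above is the technical heart of the proof, and it is also the step that prevents quantitative control on $N_0$.
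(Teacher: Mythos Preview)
Your overall architecture matches the paper's: reduce to a multivariate local CLT from Bhattacharya--Rao and verify its smoothness hypothesis. The divergence is in how that hypothesis is verified, and this is where your proposal has a genuine gap.

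The theorem you want to invoke (Theorem~19.1 in \cite{BR76}, as quoted in the paper) requires $\hat{M}_1\in L^p(\R^D)$ for some finite $p$; equivalently, that \emph{some convolution power of the law of $M_1$ has a bounded density}. Your submersion argument, as stated, only yields that the law of $T_{m_0}$ has a nonzero absolutely continuous \emph{component}. Even if you strengthen this (using that the non-submersion locus of a polynomial map is a proper subvariety, hence null) to full absolute continuity of $T_{m_0}$, you still only obtain a density $g\in L^1$. This is strictly weaker than what the LCLT needs: there exist compactly supported $L^1$ densities whose Fourier transform lies in no $L^p$, so Riemann--Lebesgue alone does not close the gap. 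Attempts to bootstrap by writing $\mu^{*m_0}=c\nu_1+(1-c)\nu_2$ with $\nu_1$ having bounded density run into the non-integrable constant term $(1-c)^2$ when estimating $|\hat\mu|^{2m_0}$, and iterating convolutions never eliminates the possibly singular or unbounded remainder.

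The paper circumvents this entirely by working on the Fourier side from the start: it shows directly, via Van der Corput--type oscillatory integral estimates, that $|\hat{M}_1(r\eta)|\le C_k r^{-1/k}$ uniformly in $\eta$, which immediately gives $\hat{M}_1\in L^p$ for $p>kD$. The key input is that for each direction $\eta$, the phase $\eta\cdot P_k^d$ is a nonzero polynomial of degree at most $k$, so some directional derivative of order $\le k$ is bounded below by a positive constant depending only on $k,d$ (by compactness of the sphere). Your geometric submersion observation is correct and natural, but to complete your route you would still need a quantitative decay estimate for $\hat g$---which, for pushforwards by polynomial maps, is precisely what Van der Corput provides. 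In short, the missing ingredient in your proposal is exactly the paper's main technical step.
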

This theorem can be viewed as a local limit theorem (and its proof follows this approach). The moment vector $\sqrt{n}\bar{S}_n$ is a sum of IID contributions and we show that starting at some large $n$, it has density which (suitably scaled) converges uniformly to the density of a centered Gaussian vector. Our main tool is Fourier analytic estimates.

\subsection{Application to construction of local cubatures}\label{local_CTQ_sec}
In this section, we apply the results of Section~\ref{universal_bound_section} to give a construction of discrete point sets on the sphere and the cylinder which are approximate Chebyshev-type cubatures.
In the case of the sphere, we approximate its surface measure. In the case of the cylinder, the approximation is to a measure with density (with respect to surface area) constant on every spherical section and growing linearly along the axis of the cylinder. In both cases, our approximations are stronger than ordinary Chebyshev-type cubatures in that they are ``local'', i.e., there is a partition of the set in question (the sphere or the mid-part of the cylinder) to subsets of small diameter such that our point set restricted to each of these subsets is an approximate Chebyshev-type cubature.

The application to the cylinder, which builds on the application to the sphere, was central in the recent study \cite{CPPR09} on gravitational allocation where it was used to construct ``wormholes''; long tentacles in space surrounded by rings of stars in which the gravitational force is atypically strong in the tentacle's direction. 

Our construction of the Chebyshev-type cubature for the sphere is very similar to a construction of Wagner \cite{W92} which he used in his work on a problem in potential theory (in a somewhat similar manner as the application in \cite{CPPR09}). Despite this similarity, we chose to give a full proof of it here since some parts in Wagner's construction (such as the exact partition of the sphere) are only sketched and since our construction 
gives explicit bounds on the number of nodes in the cubature (at the expense of getting only an approximate cubature formula), whereas his only shows existence.

To state our theorems, define $\sigma_d$ to be the $d$-dimensional area measure on sets in $\R^d$, and, abusing notation slightly, also as the $d$-dimensional Hausdorff measure on sets in $R^{d'}$ for $d'>d$.
For a set $E\subseteq \R^d$, define $\diam(E):=\max_{x,y\in E}|x-y|$, the diameter of $E$, where $|\cdot|$ is Euclidean distance. We use the same multi-index notation as defined after \eqref{moment_map_def}. Define for $d,k\ge 1$ and $\delta>0$,
\begin{equation}\label{m_0_definition}
m_0(d,k,\delta):=\text{Smallest integer $m\ge 1$ satisfying }\left(\frac{ke}{m+1}\right)^{m+1}\le \frac{\delta}{2d2^k}.
\end{equation}
Embedding $\S^d$ into $\R^{d+1}$ as the unit sphere, we prove:
\begin{theorem}\label{special_sphere_cubature}
For each $d\ge 1, k\ge 1$ and $0<\tau,\delta<1$ there exist $C(d),c(d)>0$ (depending only on $d$), an integer $K=K(\tau,d)>0$ and a partition of $\S^{d}$ (up to surface measure $0$) into measurable subsets $E_1, \ldots, E_K$ satisfying the following properties:
\begin{enumerate}
\item $\diam(E_i)\le C(d)\tau$, $\sigma_{d}(E_i)\ge c(d)\tau^{d}$ for all $i$, and $K\le C(d)\tau^{-d}$.
\item For $N=n^d$, where $n$ can be any integer satisfying $n\ge C(d)^{m_0(d,k,\delta)}$, and for each $1\le i\le K$, there exist $(z_{i,j})_{j=1}^N\subseteq E_i$ such that
\begin{equation}\label{approx_quadrature_on_sphere}
\left|\frac{1}{N}\sum_{j=1}^N g(z_{i,j}) - \frac{1}{\sigma_{d}(E_i)}\int_{E_i} g(z)d\sigma_{d}(z)\right|\le \delta
\end{equation}
for all $g:\R^{d+1}\to\R$ of the form $g(z)=(z-w)^\alpha$ for $w\in\S^d$ and a multi-index $\alpha$ with $|\alpha|\le k$.
\end{enumerate}
\end{theorem}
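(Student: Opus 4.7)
The plan is to proceed in three stages: (i) construct the partition $E_1,\ldots,E_K$; (ii) reduce, for each $E_i$, the approximate cubature requirement to polynomial exactness in suitable coordinates; (iii) obtain this exactness by tensoring one-dimensional Chebyshev-type quadratures from Theorem~\ref{one_dim_quadrature_thm}.

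For Stage (i), I would proceed by induction on $d$. For $d=1$ the circle is divided into arcs of length approximately $\tau$. For $d\ge 2$, slice $\S^d\subset\R^{d+1}$ by the value of the last Cartesian coordinate $z_{d+1}$ into ``zones'' of width $\sim\tau$, with polar caps at the two ends kept as single pieces; each zone is topologically $I\times\S^{d-1}(r)$ for $r=\sqrt{1-z_{d+1}^2}$ and is further subdivided using the inductive construction on $\S^{d-1}$ rescaled by $r$, using coarser angular partitions when $r$ is small so that each final $E_i$ still satisfies $\diam(E_i)\le C(d)\tau$. Elementary estimates then yield $\sigma_d(E_i)\ge c(d)\tau^d$ and $K\le C(d)\tau^{-d}$.

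For Stage (ii), fix $E_i$ and parameterize it by a hyperspherical-type map $\phi\colon I_1\times\cdots\times I_d\to E_i$ adapted to the partition, under which the pullback of the surface measure factorizes as $\prod_{j=1}^d w_j(\theta_j)\,d\theta_j$ with each $w_j$ having density bounded above and below by positive constants depending only on $d$. In these coordinates, each Cartesian coordinate of $\phi(\theta)$ is a product of sines and cosines of single $\theta_j$'s. Expanding $(z-w)^\alpha=\sum_{\gamma\le\alpha}\binom{\alpha}{\gamma}(-w)^{\alpha-\gamma}z^\gamma$ therefore writes $(z-w)^\alpha\circ\phi$ as a sum of at most $2^k$ products $\prod_{j=1}^d h_{\gamma,j}(\theta_j)$, each $h_{\gamma,j}$ a trigonometric monomial of degree $\le k$ with $|h_{\gamma,j}|\le 1$, and with coefficients $(-w)^{\alpha-\gamma}$ of modulus at most $1$.

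For Stage (iii), a Taylor-remainder estimate yields that any trigonometric monomial of degree $\le k$ is approximated uniformly on $I_j$ by a polynomial of degree $m=m_0(d,k,\delta)$ with error at most $(ke/(m+1))^{m+1}\le \delta/(2d\cdot 2^k)$, precisely matching the definition of $m_0$. Telescoping across the $d$ coordinates and summing over the $2^k$ expansion terms produces a polynomial $\widetilde g(\theta)$, of degree $\le m$ in each variable, that approximates $(z-w)^\alpha\circ\phi$ uniformly within $\delta/2$. Applying Theorem~\ref{one_dim_quadrature_thm} with $m$ in place of $k$ to each normalized factor measure (whose boundedness gives $R(1/(m+3))\ge c(d)/m$, hence a node requirement of $n\ge C(d)^m$) yields an $n$-point Chebyshev-type quadrature on each $I_j$ exact for polynomials of degree $\le m$. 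The tensor product of these $d$ quadratures gives an $n^d$-point set in $E_i$ exact for every polynomial of coordinate-degree $\le m$, hence exact for $\widetilde g$; combining with the approximation bound proves \eqref{approx_quadrature_on_sphere}. The main obstacle is Stage (i): the partition must be constructed with explicit, $d$-dependent handling of the coordinate singularities at the poles so as to achieve simultaneous lower bounds on $\sigma_d(E_i)$ and upper bounds on $\diam(E_i)$.
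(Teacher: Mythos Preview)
Your proposal is correct and follows essentially the same route as the paper: an inductive partition of $\S^d$ into spherical-coordinate boxes, factorization of both the surface measure and the monomials $z^\alpha$ across coordinates, Taylor approximation of the trigonometric factors $\sin^{k_1}\cos^{k_2}$ by polynomials of degree $m_0$, one-dimensional Chebyshev-type quadratures for the factor measures, tensoring, and finally the binomial expansion of $(z-w)^\alpha$.

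Two minor corrections. First, the factor densities $w_j(\theta_j)=\sin^j(\theta_j)$ are \emph{not} bounded below near the poles; what actually holds (and is all you need) is that, after rescaling each interval to $[0,1]$ and normalizing to a probability measure, the density is bounded \emph{above} by a constant depending only on $d$, because $\max_{I}\sin^j/\bigl(\tfrac{1}{|I|}\int_I\sin^j\bigr)\le C(j)$ uniformly in the subinterval $I$. Second, the paper avoids your special treatment of polar caps: working entirely in the angular coordinates $\Omega_d$, it slices the outermost angle $\theta_{d-1}$ into equal subintervals and applies the inductive construction on each slice with parameter $\tau'=\min(\tau/r,\tfrac12)$, where $r=\sin$ of the slice's midpoint. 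This automatically coarsens the angular subdivision near the poles while keeping every box with all side lengths $\le 1$; the latter is exactly what makes the Taylor remainder bound $|\theta-\sigma_1|^{m+1}\le 1$ work uniformly. If you instead keep a polar cap as a single piece, its box in $\Omega_d$ has sides of length $2\pi$ and $\pi$, and your Stage~(iii) estimate would need adjustment.
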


To state our theorem for the cylinder, we make a few more definitions.
Given $L,W>0$ and a dimension $d\ge 1$, let 
\begin{equation*}
 P_{L,W}:=\{x\in\R^d\ |\ |x_1|\le L, x_2^2+\cdots +x_d^2=W^2\}.
\end{equation*}
so that $P_{L,W}$ is the curved part of the boundary of a length $L$ cylinder of radius $W$.
Let $\nu_{L,W}$ be the measure supported on $P_{L,W}$ and absolutely continuous with respect to $\sigma_{d-1}$ with density $V(x_1,\ldots, x_d)=v(x_1)=1+\frac{x_1+L}{2L}$. I.e., the density increases linearly from $1$ to $2$ as $x_1$ increases from $-L$ to $L$.
Recalling the definition of $m_0(d,k,\delta)$ from \eqref{m_0_definition}, we prove:
\begin{theorem}\label{special_cubature_on_cyl_thm}
For each $d\ge 3$ there exists $C(d)>0$ such that for each $k\ge 1$, $L>C(d)$, $W>0$, $0<\tau<W$ and $0<\delta<W^k$, we have an integer $K=K(L,W,\tau,d)>0$ and measurable subsets $D_1, \ldots, D_{K}\subseteq P_{2L,W}$ satisfying the following properties:
\begin{enumerate}
\item[(I)] $\nu_{2L,W}(D_i\cap D_j)=0$ for each $i\neq j$ and $\nu_{2L,W}\left(P_{L,W}\setminus\left(\cup_{i=1}^{K} D_i\right)\right)=0$.
\item[(II)] $\diam(D_i)\le C(d)\tau$, $\nu_{2L,W}(D_i)= \tau^{d-1}$ for all $i$, and $K\le C(d)LW^{d-2}\tau^{-(d-1)}$.
\item[(III)] For $n=n_1^{d-1}$, where $n_1$ can be any integer satisfying $n_1\ge C(d)^{m_0(d-2,k,\delta/(4LW)^k)}$, and for each $1\le i\le K$, there exist $(w_{D_{i},j})_{j=1}^n\subseteq D_i$ such that
\begin{equation*}
\left|\frac{1}{n}\sum_{j=1}^n h(w_{D_i,j}) - \frac{1}{\nu_{2L,W}(D_i)}\int_{D_i} h(w)d\nu_{2L,W}(w)\right|\le \delta
\end{equation*}
for all $h:\R^{d}\to\R$ of the form $h(w)=(w-y)^\alpha$ for $y\in P_{2L,W}$ and a multi-index $\alpha$ with $|\alpha|\le k$.
\end{enumerate}
\end{theorem}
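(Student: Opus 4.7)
The plan is to decompose each brick $D_i$ as a product $I_i\times F_i$, where $I_i$ is an axial sub-interval of $[-L,L+O(\tau)]$ and $F_i\subseteq W\S^{d-2}$ is a small patch on the cross-sectional sphere, and to build the cubature on $D_i$ as a product of a one-dimensional Chebyshev-type quadrature along $I_i$ (for the density $v(x_1)dx_1$) with the spherical cubature of Theorem~\ref{special_sphere_cubature} on $F_i$. This decomposition works cleanly because every target polynomial $h(w)=(w-y)^{\alpha}$, with $\alpha=(\alpha_1,\alpha')\in\N\times\N^{d-1}$, factors as $(w_1-y_1)^{\alpha_1}(w_s-y_s)^{\alpha'}$ under the axial/angular split $w=(w_1,w_s)$, and $\nu_{2L,W}$ restricted to $I_i\times F_i$ factors as $v(x_1)dx_1\otimes d\sigma_{d-2}|_{F_i}$.

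For the partition, first apply the partition half of Theorem~\ref{special_sphere_cubature} to the unit sphere $\S^{d-2}$ with parameter $\tau/W<1$, then rescale by $W$ to obtain patches $F_1,\ldots,F_{K_0}\subseteq W\S^{d-2}$ with $\diam(F_\ell)\le C(d)\tau$; the lower bound $\sigma_{d-2}(F_\ell)\ge c(d)\tau^{d-2}$ is given, and the matching upper bound follows automatically from the diameter bound. For each $\ell$ independently, cut $[-L,L'_\ell]$ (with $L'_\ell-L=O(\tau)$, so still inside $[-L,2L]$ under the hypothesis $L>C(d)$) into axial sub-intervals $I_j^{(\ell)}$ whose widths are fixed by the requirement $\nu_{2L,W}(I_j^{(\ell)}\times F_\ell)=\tau^{d-1}$; since $\sigma_{d-2}(F_\ell)\asymp\tau^{d-2}$ and $v\in[1,2]$ on $P_{2L,W}$, each such width lies in $[c'\tau,C'\tau]$. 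The bricks $D_i:=I_j^{(\ell)}\times F_\ell$ are disjoint in $\nu$-measure (since the $F_\ell$'s are disjoint up to measure zero), cover $P_{L,W}$, have diameter $\le C(d)\tau$, and number at most $C(d)LW^{d-2}\tau^{-(d-1)}$ by a total-measure count.

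On each brick $D_i=I_i\times F_i$, apply Theorem~\ref{one_dim_quadrature_thm} (or Theorem~\ref{abs_cont_upper_bound_thm}) to the normalized axial density $v|_{I_i}/\!\int_{I_i}v$ on $I_i$ — which is essentially bounded by $2$ after rescaling $I_i$ to $[0,1]$ — to obtain $n_1$ axial nodes $(y_{i,m})$ matching all moments on $I_i$ up to degree $k$ \emph{exactly}; this needs $n_1$ of order $C^k$. Simultaneously, apply Theorem~\ref{special_sphere_cubature} to the rescaled piece $F_i/W\subseteq\S^{d-2}$ with accuracy parameter $\delta/(4LW)^k$ (which lies in $(0,1)$ thanks to $\delta<W^k$ and $L>C(d)$), obtaining $n_1^{d-2}$ nodes $(\bar z_{i,\ell})$; this requires $n_1\ge C(d)^{m_0(d-2,k,\delta/(4LW)^k)}$, and this sphere bound dominates the axial one. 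Define the brick nodes as $w_{D_i,(m,\ell)}:=(y_{i,m},\,W\bar z_{i,\ell})$, totalling $n_1^{d-1}$.

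For the error bound, write $A,A'$ for the axial cubature average and its target, and $B,B'$ for the corresponding spherical quantities. The axial part is exact ($A=A'$), while rescaling the sphere theorem gives $|B-B'|\le W^{|\alpha'|}\cdot\delta/(4LW)^k$, the factor $W^{|\alpha'|}$ arising from $(w_s-y_s)^{\alpha'}=W^{|\alpha'|}(\bar w_s-\bar y_s)^{\alpha'}$. Since $|w_1-y_1|\le 3L$ (as $w_1\in[-L,L]$, $y_1\in[-2L,2L]$), one has $|A|\le(3L)^{\alpha_1}$, so the overall error equals $|A|\cdot|B-B'|\le(3L)^{\alpha_1}W^{|\alpha'|}\delta/(4LW)^k\le(3/4)^k\delta\le\delta$. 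The one genuine technical nuisance is engineering the partition so that every brick simultaneously achieves $\nu$-measure exactly $\tau^{d-1}$, diameter $O(\tau)$, and stays inside $P_{2L,W}$; once this is arranged, the product cubature analysis is the routine computation above.
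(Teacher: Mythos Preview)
Your proposal is correct and follows essentially the same approach as the paper: both build the bricks $D_i$ as products of axial intervals with the spherical patches from Theorem~\ref{special_sphere_cubature}, place on each brick the Cartesian product of an \emph{exact} one-dimensional Chebyshev-type quadrature along the axis (via Theorem~\ref{abs_cont_upper_bound_thm}, density bounded by $2$) with the approximate spherical cubature, and bound the product error by $|A|\cdot|B-B'|$ using $|w_1-y_1|=O(L)$. The only cosmetic difference is that the paper first reduces to $W=1$ by a global rescaling and then runs the construction on the interval $[-\tfrac{3}{2}L,\tfrac{3}{2}L]$, whereas you work directly with general $W$ and start the axial intervals at $-L$; both choices lead to the same estimates.
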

It is worth noting that in the above theorem, the sets $D_1,\ldots, D_K$ cover $P_{L,W}$ (up to $\nu$-measure 0) and are contained in $P_{2L,W}$, but do not necessarily form a partition of $P_{2L,W}$ (up to $\nu$-measure 0). Indeed, this is not possible for generic values of $L,W$ and $\tau$ since property (I) implies the $D_i$ are disjoint (up to $\nu$-measure 0) and property (II) implies that each $D_i$ has $\nu$ measure exactly $\tau^{d-1}$. Thus, for the sets to form a partition, we would need that $\frac{\nu_{2L,W}(P_{2L,W})}{\tau^{d-1}}$ be an integer.
\begin{subsection}{Acknowledgments}
We are grateful to Franz Peherstorfer for explaining to us the relevance of \cite{P90} and suggesting that it may be used to obtain lower bounds for $n_\sigma(k)$, as we did in Theorem~\ref{second_example_theorem}. We thank Nir Lev for his essential help in referring us to the book of Stein and explaining the relevance of its propositions to estimates in Section~\ref{random_cubatures_on_cube_section}.
 We thank Boris Tsirelson and Mikhail Sodin for several useful conversations, in particular concerning approximation of continuous measures by discrete ones. Finally, we thank Greg Kuperberg, Yuval Peres, Dan Romik and Sasha Sodin for useful discussions and comments on quadratures and cubatures.
\end{subsection}
\end{section}

\begin{section}{Proofs and supplements}\label{proofs_section}
\begin{subsection}{Existence of Chebyshev-type quadratures}\label{existence_section}

In this section we prove Theorem~\ref{CTQ_existence_theorem}. We start with the following theorem which follows from classical results in the theory of the moment problem:



\begin{theorem}\label{quadrature_exists_thm}
 Given $k=2m-1$ for $m\in\N$ and a probability measure $\sigma$ on $\R$ with $\int |x|^kd\sigma(x)<\infty$. Unless $\sigma$ is purely atomic with less than $m$ atoms, there exists a quadrature formula \eqref{quadrature_formula} for $\sigma$ of degree at least $k$ having exactly $2m+1$ nodes.
\end{theorem}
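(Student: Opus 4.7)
The plan is to begin with the Gaussian quadrature of $\sigma$, which has $m$ nodes, and then enlarge it to a $(2m+1)$-node quadrature by inserting $m+1$ auxiliary nodes strictly interlaced with the Gaussian ones and moving along the unique one-dimensional perturbation direction allowed by the moment constraints. First I would verify that the hypothesis on $\sigma$ makes the truncated moment problem non-degenerate: the Hankel matrix $H_m=(\int x^{i+j}d\sigma)_{i,j=0}^{m-1}$ is strictly positive definite, since $v^T H_m v=\int\bigl(\sum_i v_i x^i\bigr)^2 d\sigma$ can only vanish for $v\neq 0$ when $\sigma$ is supported on at most $m-1$ points, contradicting the assumption. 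Classical orthogonal-polynomial theory then produces the Gaussian quadrature with distinct real nodes $\xi_1<\cdots<\xi_m$ and strictly positive weights $\lambda_1,\ldots,\lambda_m$ summing to $1$ and representing $\int x^jd\sigma$ for all $0\le j\le 2m-1$.

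Next I would choose $m+1$ real numbers $y_1<\xi_1<y_3<\xi_2<y_5<\cdots<\xi_m<y_{2m+1}$ strictly interlacing the Gaussian nodes, and relabel the merged set in increasing order as $y_1<y_2<\cdots<y_{2m+1}$, so that $y_{2i}=\xi_i$ for $1\le i\le m$. The linear system
\begin{equation*}
\sum_{i=1}^{2m+1} w_i\,y_i^{\,j}=\int x^j\,d\sigma,\qquad 0\le j\le 2m-1,
\end{equation*}
consists of $2m$ equations in $2m+1$ unknowns; its coefficient matrix is a Vandermonde block of rank $2m$, so its solution set is a one-dimensional affine subspace of $\R^{2m+1}$, containing the Gaussian solution $(0,\lambda_1,0,\lambda_2,\ldots,0,\lambda_m,0)$. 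A generator of the kernel direction is provided by the Lagrange interpolation identity
\begin{equation*}
\sum_{i=1}^{2m+1}\frac{y_i^{\,j}}{\prod_{k\neq i}(y_i-y_k)}=0,\qquad 0\le j\le 2m-1,
\end{equation*}
namely $v_i:=\bigl(\prod_{k\neq i}(y_i-y_k)\bigr)^{-1}$, whose sign is $(-1)^{2m+1-i}$: positive at the odd indices (the new auxiliary nodes) and negative at the even indices (the Gaussian nodes).

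Finally I would perturb along $+v$ by a small $t>0$: the new weights are $w_i^{(t)}=tv_i>0$ at odd $i$ and $w_i^{(t)}=\lambda_{i/2}+tv_i$ at even $i$, and the latter remain strictly positive as long as $0<t<\min_i \lambda_{i/2}/|v_i|$. For any such $t$ the measure $\sum_{i=1}^{2m+1} w_i^{(t)}\delta_{y_i}$ is a probability measure with exactly $2m+1$ distinct nodes, strictly positive weights, and matching $\int x^jd\sigma$ for all $0\le j\le 2m-1=k$, giving the desired quadrature. The main obstacle, and the reason for insisting on interlacing, is the sign pattern of the Lagrange tangent $v$: any placement of the $m+1$ auxiliary points that is not interlaced with the Gaussian nodes would put some of them at even indices of the sorted sequence, where $v$ is negative, and the natural perturbation would then be unable to raise all those weights from $0$ into the positive reals before the Gaussian weights are driven negative.
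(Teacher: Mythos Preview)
Your proof is correct but follows a genuinely different route from the paper's. Both arguments begin by passing to the $m$-node Gaussian quadrature (using positivity of the Hankel matrix $H_m$ under the stated hypothesis), but they diverge from there. The paper invokes the Krein--Nudel'man theory of canonical representations: having reduced to an atomic $\sigma$ with $m$ atoms supported in some $[a,b]$, it enlarges the interval slightly to $[c,d]$, observes that $\sigma$ is strictly positive on $[c,d]$ for degree $k$, and then quotes the existence of a ``lower representation'' $\sigma_{m+1}$ with exactly $m+1$ nodes in $(c,d)$, all distinct from the atoms of $\sigma$; the desired $(2m+1)$-node formula is then simply $\tfrac{1}{2}(\sigma+\sigma_{m+1})$.

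Your approach replaces this black-box appeal to moment theory with an explicit linear-algebra construction: you interlace $m+1$ auxiliary points with the Gaussian nodes, identify the one-dimensional kernel of the $2m\times(2m+1)$ Vandermonde system via the Lagrange identity, read off the alternating sign pattern of the kernel vector, and perturb by a small positive multiple of it to push all weights into the positive reals. This is more elementary and entirely self-contained, at the cost of being longer; the paper's argument is shorter but depends on the reader knowing (or looking up) the structure theory of canonical representations in \cite{KN77}. Both yield the same conclusion with the same node count.
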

\begin{proof}
We assume without loss of generality that $\sigma$ is atomic with $m$ nodes since otherwise we can replace $\sigma$ by its Gaussian quadrature \eqref{Gaussian_quadrature}.
Then its support is contained in an interval $[a,b]$. Fix $c<a$ and $d>b$. Note that for any polynomial $P\not\equiv0$ of degree at most $k$ which is non-negative on $[c,d]$ we have $\int_c^d Pd\sigma>0$. In other words (see \cite[III \textsection 1]{KN77}), $\sigma$ (or rather its first $k$ moments) is strictly positive with respect to $[c,d]$ and $k$. 
This implies that there exists a quadrature formula $\sigma_{m+1}$ for $\sigma$ having degree at least $k$, exactly $m+1$ nodes, all in $(c,d)$ and all different from those of $\sigma$ (this is any of the lower representations of index $n+3$, see \cite[III \textsection7.1]{KN77}. All nodes are in the interior of $[c,d]$ since $k$ is odd). Then the measure $\frac{1}{2}(\sigma+\sigma_{m+1})$ satisfies the requirements of the theorem.\qedhere


\end{proof}
We also need two theorems of Kuijlaars:
\begin{theorem}\label{Kuijlaars_distinct_nodes}(\cite{K93}, Theorem 3.2) Given a probability measure $\sigma$, suppose we have a Chebyshev-type quadrature formula \eqref{Chebyshev_type_quadrature} of degree at least $k$ with $n$ nodes in $(-1,1)$, of which $n_0\ge k$ are distinct. Then there exists a Chebyshev-type quadrature formula with $n$ distinct nodes in $(-1,1)$ of degree at least $k$. 
\end{theorem}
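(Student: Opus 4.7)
The plan is geometric: I would view the set of admissible node configurations as a level set of the moment map, show it is a smooth manifold near the given quadrature $x^*$, and then argue that its ``diagonal subset'' (points with some repeated coordinates) is nowhere dense, so a nearby point avoiding every diagonal supplies the required distinct-node quadrature.

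More concretely, writing $m_j := \int t^j\,d\sigma(t)$ and $x^* = (x^*_1,\ldots,x^*_n) \in (-1,1)^n$ for the given quadrature, I would study
\[
V := \Bigl\{ x \in (-1,1)^n : \frac{1}{n}\sum_{i=1}^n x_i^j = m_j,\ 1 \le j \le k \Bigr\}.
\]
The first step is to apply the implicit function theorem at $x^*$. The Jacobian of the moment map is the $k \times n$ matrix $A$ with $A_{ji} = \frac{j}{n}(x^*_i)^{j-1}$; on any $k$ columns corresponding to $k$ pairwise distinct values of $x^*$, it factors as $\mathrm{diag}(1,2,\ldots,k)/n$ times a classical Vandermonde, hence is invertible. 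Since the hypothesis $n_0 \ge k$ furnishes at least $k$ distinct values among $x^*_1,\ldots,x^*_n$, the matrix $A$ has full row rank $k$, and $V$ is, near $x^*$, a smooth $(n-k)$-dimensional submanifold of $(-1,1)^n$ with tangent space $\ker A$.

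The main step, where I expect the real content of the argument to sit, is to rule out that $V$ lies locally in any ``bad hyperplane'' $H_{ij} := \{x : x_i = x_j\}$ corresponding to a pair $(i,j)$ with $x^*_i = x^*_j$ and $i \neq j$. Equivalently, the linear functional $\epsilon \mapsto \epsilon_i - \epsilon_j$ should fail to belong to the row space of $A$. That row space consists exactly of vectors of the form $(P(x^*_1),\ldots,P(x^*_n))$ as $P$ ranges over polynomials of degree at most $k-1$; if $e_i - e_j$ did lie in it, some such $P$ would have to satisfy both $P(x^*_i) = n$ and $P(x^*_j) = -n$, which is impossible because $x^*_i = x^*_j$. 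This small polynomial-interpolation remark is the one place where the hypothesis $n_0 \ge k$ genuinely bites (in addition to its role in the rank computation above), and it is the non-routine content of the proof.

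To finish, each relevant $V \cap H_{ij}$ is a proper smooth submanifold of $V$ and hence is nowhere dense in $V$; the union over the finitely many pairs $(i,j)$ with $x^*_i = x^*_j$ is therefore still nowhere dense. Picking $x \in V$ arbitrarily close to $x^*$ and avoiding all these diagonals produces a configuration whose coordinates are pairwise distinct (pairs already distinct at $x^*$ remain so under a sufficiently small perturbation) and which still lies in $(-1,1)^n$ by continuity; this yields the desired Chebyshev-type quadrature of degree at least $k$ with $n$ distinct nodes in $(-1,1)$.
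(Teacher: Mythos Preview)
The paper does not supply its own proof of this theorem; it merely quotes the result from Kuijlaars \cite{K93} and uses it as a black box. So there is no ``paper's proof'' to compare against.

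Your argument is correct and is in fact the natural implicit-function-theorem proof one would expect. Two small remarks. First, a cosmetic inconsistency: once you describe the row space as $\{(P(x^*_1),\ldots,P(x^*_n)) : \deg P \le k-1\}$ without the factor $1/n$, the values forced on $P$ are $P(x^*_i)=1$ and $P(x^*_j)=-1$, not $\pm n$; of course the contradiction with $x^*_i=x^*_j$ is unaffected. Second, your sentence ``this is the one place where $n_0\ge k$ genuinely bites (in addition to the rank computation)'' overstates the role of the hypothesis: the transversality step works for any configuration, since the impossibility of $P(x^*_i)\neq P(x^*_j)$ when $x^*_i=x^*_j$ uses nothing about the number of distinct nodes. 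The hypothesis $n_0\ge k$ is needed only to guarantee that the Jacobian has full row rank, so that $V$ is a smooth $(n-k)$-manifold near $x^*$ and the implicit function theorem applies at all. With that clarification, the proof stands.
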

\begin{theorem}\label{Kuijlaars_open_set}(\cite{K93}, Theorem 4.2) Given a probability measure $\sigma$ and a quadrature formula \eqref{quadrature_formula} with weights $(m_i)_{i=1}^n$, distinct nodes in $(-1,1)$ and degree at least $n-1$, there exists a relatively open subset $U$ of the collection $\{(p_1,\ldots,p_n)\ |\ p_i>0\text{ for $1\le i\le n$ and }\sum_{i=1}^n p_i=1\}$ with $(m_i)_{i=1}^n\in U$ and such that for every $(p_i)_{i=1}^n\in U$ there exist nodes $(\tilde{x}_i)_{i=1}^n\subseteq(-1,1)$ satisfying
\begin{equation*}
 \sum_{i=1}^n p_i\tilde{x_i}^j=\int x^jd\sigma(x)\qquad 0\le j\le n-1.
\end{equation*}
\end{theorem}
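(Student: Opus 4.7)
The plan is to prove Theorem~\ref{Kuijlaars_open_set} by a direct application of the implicit function theorem. Writing $\mu_j := \int x^j\,d\sigma(x)$ and $\Delta := \{p\in\R^n_+ : \sum_i p_i = 1\}$, I would define
\[
F\colon \R^{n-1}\times \Delta \to \R^{n-1}, \qquad F_j(\tilde x_1,\ldots,\tilde x_{n-1}; p) := \sum_{i=1}^{n-1} p_i \tilde x_i^{\,j} + p_n x_n^{\,j} - \mu_j, \quad 1\le j\le n-1,
\]
freezing the last node to its original value $x_n$ and leaving $\tilde x_1,\ldots,\tilde x_{n-1}$ as the unknowns. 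The hypothesis supplies $n$ moment equations ($0\le j\le n-1$); the $j=0$ identity is automatic on $\Delta$, and the remaining $n-1$ identities tell me that $F\bigl((x_1,\ldots,x_{n-1});(m_1,\ldots,m_n)\bigr)=0$, so I have a base point at which to invoke the IFT.

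Next I would compute the Jacobian of $F$ in the variables $\tilde x_1,\ldots,\tilde x_{n-1}$ at the base point: the $(j,i)$ entry equals $p_i\, j\, x_i^{j-1}$. Factoring $p_i$ out of the $i$-th column and $j$ out of the $j$-th row gives
\[
\det\Bigl(\partial F_j/\partial \tilde x_i\Bigr)_{i,j=1}^{n-1} \;=\; (n-1)!\,\prod_{i=1}^{n-1} p_i \;\cdot \det\bigl(x_i^{\,j-1}\bigr)_{i,j=1}^{n-1} \;=\; (n-1)!\,\prod_{i=1}^{n-1} p_i\,\prod_{1\le i<j\le n-1}(x_j - x_i),
\]
a nonzero quantity because the $x_i$ are distinct and the weights $m_i$ are positive. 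By continuity the determinant remains nonzero in a neighbourhood of the base point.

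The implicit function theorem then furnishes an open neighbourhood $U\subseteq \Delta$ of $(m_1,\ldots,m_n)$ and smooth functions $p \mapsto \tilde x_i(p)$ with $\tilde x_i(m)=x_i$ satisfying $F(\tilde x_1(p),\ldots,\tilde x_{n-1}(p); p) \equiv 0$ on $U$. Setting $\tilde x_n(p):=x_n$ produces the required $n$ nodes realising the moments $\mu_0,\ldots,\mu_{n-1}$ with weights $p$. Finally, since $x_1,\ldots,x_n\in(-1,1)$, continuity of $\tilde x_i(\cdot)$ lets me shrink $U$ further so that all the perturbed nodes remain in $(-1,1)$, completing the proof.

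The only genuinely nontrivial step is the nonvanishing of the Jacobian, and this is where I expect any trouble to sit; but the reduction to a classical Vandermonde determinant dispatches it cleanly, so no further subtlety should arise. I do not need the $(\tilde x_i(p))$ to remain distinct—although they will, by continuity, so that the conclusion actually delivers a Chebyshev-type-style quadrature with $n$ distinct nodes whenever $p_i=1/n$ for all $i$.
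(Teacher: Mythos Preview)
Your implicit function theorem argument is correct. Note, however, that the paper does not actually prove this statement: it is quoted from Kuijlaars \cite{K93} and the paper merely remarks that ``the (short) proof is valid for any $\sigma$.'' Your approach---reduce to $n-1$ equations by dropping the $j=0$ constraint, freeze one node, and invoke the IFT after recognising the Vandermonde structure of the Jacobian---is the natural short argument and is essentially what Kuijlaars does, so there is nothing further to compare.
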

The second theorem was proven in \cite{K93} for absolutely continuous $\sigma$ with bounded support but the (short) proof is valid for any $\sigma$.

\begin{proof}[Proof of Theorem~\ref{CTQ_existence_theorem}]
If $\sigma$ is purely atomic with $j\le \frac{k}{2}$ atoms, it is well known that $\sigma$ itself is the only quadrature formula having degree at least $k$. This can be seen by considering the non-negative polynomial $P$ having a double zero at each atom of $\sigma$. Since $P$ has degree $2j\le k$ and since $\int Pd\sigma=0$ we see that the integral of $P$ is zero also with respect to a quadrature with degree at least $k$. Hence, the nodes of that quadrature are a subset of the nodes of $\sigma$, but this implies that they are equal since the location of the nodes determines the weights by solving a linear system with a Vandermonde coefficient matrix.

Assume now that $\sigma$ is not purely atomic with $j\le\frac{k}{2}$ atoms. We may assume $k=2m-1$ for some $m\in\N$ since if $k$ is even, the theorem remains true when $k$ is replaced by $k+1$. We use Theorem~\ref{quadrature_exists_thm} to obtain $\sigma_{2m+1}$, a quadrature for $\sigma$ of degree at least $k$ having exactly $2m+1$ nodes. It follows from Theorem~\ref{Kuijlaars_open_set} that for some $n_0\in\N$ and any $n\ge n_0$ there exist Chebyshev-type quadratures with $n$ nodes having the same first $2m$ moments as $\sigma_{2m+1}$, so that, in particular, they have degree at least $k$ with respect to $\sigma$. The nodes of these quadratures can be made distinct using Theorem~\ref{Kuijlaars_distinct_nodes}.
\end{proof}
\begin{remark}\label{strict_CTQ_remark}
For a probability measure $\sigma$ with support in $[a,b]$ and $k\in\N$, we say that $\sigma$ is singular with respect to $[a,b]$ and $k$ if there exists a polynomial $P\not\equiv 0$ of degree at most $k$ which is non-negative on $[a,b]$ and such that $\int Pd\sigma=0$. Equivalently, $\sigma$ is singular with respect to $[a,b]$ and $k$ if and only if it is purely atomic and its index $I(\sigma)\le k$ where $I(\sigma):=\sum I(x)$ over all atoms $x$ of $\sigma$ and
\begin{equation*}
 I(x)=\begin{cases} 1& x=a\text{ or } x=b\\2& x\in(a,b)\end{cases}.
\end{equation*} 
If $\sigma$ is singular, it follows from the same proof as above, but using the polynomial $P$ exhibiting the singularity, that the only quadrature for $\sigma$ with all nodes in $[a,b]$ is $\sigma$ itself. 

If $\sigma$ is not singular with respect to $[a,b]$ and $k$, then the same proof as above with minor modifications
shows that for any large enough $n$, there exist Chebyshev-type quadratures for $\sigma$ having degree at least $k$ and $n$ distinct nodes in $(a,b)$. If $k$ is odd, the only modification is that in the proof of Theorem~\ref{quadrature_exists_thm}, one should obtain its representations directly in $[a,b]$ without passing to the larger interval $[c,d]$ (this is possible since $\sigma$ is non-singular on $[a,b]$). Then the quadrature thus obtained will have all its nodes in $(a,b)$. For even $k$, the additional required modification is to first replace $\sigma$ by $\sigma'$, a canonical representation of it with support in $[a,b]$, index $k+2$ and the same first $k$ moments \cite[III \textsection 4]{KN77}, then to apply the above proof for $\sigma'$ and $k+1$.
\end{remark}

\end{subsection}
\begin{subsection}{Bounds for Chebyshev-type quadratures}\label{one_dim_quadrature_section}
In this section we prove Theorems~\ref{one_dim_quadrature_thm} and \ref{large_atom_upper_bound_theorem}, which give general upper bounds for the minimal number of nodes required in a Chebyshev-type quadrature, and Theorem~\ref{general_lower_bound}, Corollary~\ref{lower_bound_theorem} and Theorem~\ref{second_example_theorem} which give lower bounds for the required number of nodes and examples of cases where many nodes are required. We remark first about possible generalizations of our results.

\begin{remark}\label{main_theorem_extensions_remark}
\begin{enumerate}
\item It is sometimes desirable that the nodes of the quadrature be distinct and contained in the open interval $(0,1)$.
To obtain bounds for such formulas using our results, start by picking a small $\eps>0$, linearly map $\sigma$ to have support in $[\eps,1-\eps]$, apply Theorem~\ref{one_dim_quadrature_thm} to the new measure and use the freedom afforded by \eqref{p_condition} to make the moments of the resulting quadrature equal those of $\sigma$. To make the nodes distinct, use Theorem~\ref{Kuijlaars_distinct_nodes} (the proof of Theorem~\ref{one_dim_quadrature_thm} gives at least $k$ distinct nodes).


\item It may also be desirable to have a result similar to Theorem~\ref{one_dim_quadrature_thm} for functions other than $x^j$. The main ingredient required to adapt our proof to such a setting is to have a ``quantitative inverse mapping theorem'', as in Proposition~\ref{perturbation_of_moments_prop}, for the new collection of functions.
\end{enumerate}
\end{remark}
We start our proofs by recalling the definition \eqref{R_sigma_def} of $R_\sigma$ for a probability measure $\sigma$, and noting the following simple properties:
\begin{enumerate}
\item $R_\sigma$ is monotonically increasing.
\item $R_\sigma(\delta)=0$ if and only if $\sigma$ has an atom of mass at least $\delta$.
\item If $\sigma$ is supported on $[0,1]$, then $R_\sigma(\frac{1}{m})\le \frac{1}{m}$ for each $m\in\N$.
\item If $\sigma$ is absolutely continuous with a density that is essentially bounded by $M$, then $R_\sigma(\delta)\ge \frac{\delta}{M}$ for all $\delta$.
\end{enumerate}



\subsubsection{Proof of Theorem~\ref{one_dim_quadrature_thm}}
We start with a lemma providing a simple approximate Chebyshev-type quadrature.
\begin{lemma}\label{simple_approximation_lemma}
Let $\sigma$ be a probability measure with $\sigma([0,1])=1$. For $n\in\N$, let $\mu:=\frac{1}{n} \sum_{i=1}^n\delta_{y_i}$, where the $(y_i)$ are chosen according to the rule:
\begin{equation}
y_i := \min\left(y\in[0,1]\ \big|\ \sigma([0,y])\ge\frac{i}{n}\right)\qquad 1\le i\le n.\label{simple_approximation}
\end{equation}
Then for all $j\in\N$, we have
\begin{equation*}
\left|\int x^jd\sigma - \int x^jd\mu\right|\le \frac{1}{n}.
\end{equation*}
\end{lemma}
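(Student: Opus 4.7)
The plan is to recognize the $y_i$'s as the $(i/n)$-quantiles of $\sigma$ and then to reduce the claim to the elementary error bound for a right-endpoint Riemann sum of a non-decreasing function valued in $[0,1]$.

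First I would introduce the cumulative distribution function $F_\sigma(x):=\sigma([0,x])$ and the left-continuous-from-the-right quantile function $F_\sigma^{-1}(u):=\min\{y\in[0,1]\,:\,F_\sigma(y)\ge u\}$ for $u\in(0,1]$, noting that this minimum is attained because $F_\sigma$ is right-continuous and non-decreasing. Directly from the defining formula \eqref{simple_approximation} we then have $y_i=F_\sigma^{-1}(i/n)$. Setting $g(u):=\bigl(F_\sigma^{-1}(u)\bigr)^j$, the standard quantile representation (if $U$ is uniform on $[0,1]$ then $F_\sigma^{-1}(U)\sim\sigma$) yields $\int x^j\,d\sigma=\int_0^1 g(u)\,du$, while by definition of $\mu$ we also have $\int x^j\,d\mu=\frac{1}{n}\sum_{i=1}^n g(i/n)$. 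So the quantity to be bounded is precisely the error of the right-endpoint Riemann sum of $g$ on $[0,1]$.

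Next I would use that $g$ is non-decreasing (being a composition of non-decreasing maps) and takes values in $[0,1]$ (since $\sigma$ is supported in $[0,1]$, so $F_\sigma^{-1}$ maps into $[0,1]$). Partitioning $[0,1]$ into the $n$ equal subintervals $I_i=((i-1)/n,\,i/n]$ and bounding $g$ on $I_i$ above by $g(i/n)$ and below by $g((i-1)/n)$ (with the convention $g(0):=0$), a one-line sandwich yields $\frac{1}{n}\sum_{i=0}^{n-1}g(i/n)\le\int_0^1 g(u)\,du\le\frac{1}{n}\sum_{i=1}^n g(i/n)$. Hence the integral and the Riemann sum differ by at most $\frac{1}{n}\bigl(g(1)-g(0)\bigr)\le\frac{1}{n}$, which is the claimed bound.

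There is no real obstacle; the only subtlety is choosing the correct one-sided quantile function so that the identity $y_i=F_\sigma^{-1}(i/n)$ remains valid when $\sigma$ has atoms (where $F_\sigma$ has jumps). After that, everything reduces to the monotone Riemann-sum estimate above.
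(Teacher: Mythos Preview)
Your argument is correct. The quantile transform identification $y_i=F_\sigma^{-1}(i/n)$ is exactly right, the inverse-CDF law $F_\sigma^{-1}(U)\sim\sigma$ holds with the left-continuous quantile you chose (and survives atoms, as you note), and the monotone right-endpoint Riemann-sum bound gives $\bigl|\int_0^1 g - \tfrac1n\sum g(i/n)\bigr|\le\tfrac1n\,g(1)\le\tfrac1n$.

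The paper argues instead directly in $x$-space: it decomposes $\sigma=\sum_{i=1}^n\sigma_i$ into pieces of mass $1/n$ supported on $[y_{i-1},y_i]$ (carefully splitting any atom that straddles a boundary), bounds $\bigl|\int(x^j-y_i^j)\,d\sigma_i\bigr|\le\tfrac1n(y_i^j-y_{i-1}^j)$, and telescopes. Your proof is the pullback of this under the quantile map: the paper's $\sigma_i$ is the pushforward of Lebesgue measure on $((i-1)/n,\,i/n]$ by $F_\sigma^{-1}$, and the paper's telescoping sum $\sum(y_i^j-y_{i-1}^j)$ is your $g(1)-g(0)$. So the two are the same estimate, but your packaging via the quantile function is cleaner---it avoids the explicit bookkeeping of the ``leftover mass'' $\alpha_i$ at atoms and reduces everything to a one-line fact about Riemann sums of monotone functions bounded in $[0,1]$. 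The paper's version, on the other hand, makes the decomposition $\sigma=\sum\sigma_i$ explicit, which is also referenced in the remark following the lemma.
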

\begin{proof}
Define $y_0:=0$ and for each $0\le i\le n$ define the ``leftover mass at $y_i$'' by
\begin{equation*}
\alpha_i:=\sigma([0,y_i])-\frac{i}{n}.
\end{equation*}
Note that $0\le \alpha_i\le \sigma(\{y_i\})$ by definition of the $y_i$. Also, define measures $(\sigma_i)_{i=1}^n$ by
\begin{equation*}
\sigma_i(\cdot) := \sigma(\cdot\cap(y_{i-1},y_i])+\alpha_{i-1}\delta_{y_{i-1}}(\cdot)-\alpha_i\delta_{y_i}(\cdot)
\end{equation*}
and note that these measures are non-negative with total mass exactly $\frac{1}{n}$ and that we have $\sigma=\sum_{i=1}^n \sigma_i$. Now, fix $j\in\N$ and estimate
\begin{equation*}
\left|\int x^jd\sigma - \int x^j d\mu\right|=\left|\sum_{i=1}^n \int (x^j-y_i^j)d\sigma_i\right|\le \frac{1}{n}\sum_{i=1}^n \left(y_i^j-y_{i-1}^j\right)\le \frac{1}{n}
\end{equation*}
as required.
\end{proof}
\begin{remark}
It is worth noting that $\frac{c}{n}$ for some $c>0$ is the best approximation possible in this level of generality if one uses the above method of dividing $\sigma$ into $\sigma_i$'s with mass $\frac{1}{n}$ and approximating each one with one point. This can be seen by considering the example of $\sigma=\frac{1}{2}(\delta_0 + \delta_1)$ when $n$ is odd and the example of $\sigma=\frac{1}{3}(\delta_0+\delta_{\frac{1}{2}}+\delta_1)$ when $n$ is even.
\end{remark}

Our aim is to perturb the above simple approximation into a Chebyshev-type quadrature for $\sigma$. To this end, we define the moment map $T_k:\R^k\to\R^k$ by
\begin{equation*}
(T_k(z))_j:=\sum_{i=1}^k z_i^j,
\end{equation*}
and rely on the following quantitative ``inverse mapping theorem'':
\begin{proposition}\label{perturbation_of_moments_prop}
Fix $\rho>0$, integer $k\ge 2$ and let $z\in\R^k$ satisfy
\begin{equation}\label{z_separation_cond}
\frac{\rho}{3(k-1)}\le z_i\le 1-\frac{\rho}{3(k-1)} \quad\text{ and }\quad |z_i-z_{j}|\ge \frac{\rho}{k-1}
\end{equation}
for all $1\le i,j\le k$ with $j\neq i$. Then for any $p\in\R^k$ satisfying
\begin{equation}\label{close_moments_starting_point}
|p-T_k(z)|_\infty\le \frac{\rho}{3}\left(\frac{\rho}{12e}\right)^{k-1},
\end{equation}
there exists $w\in\R^k$ satisfying $|w-z|_\infty\le \frac{\rho}{3(k-1)}$ and $T_k(w)=p$.
\end{proposition}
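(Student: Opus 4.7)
The proposition is a quantitative inverse function theorem for the polynomial map $T_k:\R^k\to\R^k$, and my plan is to prove it in two stages: first establish a sharp bound on $\|DT_k(w)^{-1}\|_\infty$ valid for $w$ throughout a ball around $z$, then run a homotopy/ODE argument to construct $w$ with $T_k(w)=p$.

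For the conditioning estimate, I would factor $DT_k(w)=D\cdot V(w)$ where $D=\mathrm{diag}(1,\ldots,k)$ and $V(w)_{ji}=w_i^{j-1}$ is a Vandermonde matrix, so that $\|DT_k(w)^{-1}\|_\infty\le \|V(w)^{-1}\|_\infty$. The rows of $V(w)^{-1}$ are the coefficient vectors of the Lagrange basis polynomials $\ell_i(x)=\prod_{m\ne i}(x-w_m)/(w_i-w_m)$, so $\|V(w)^{-1}\|_\infty=\max_i\sum_l|c_{il}|$ with $\ell_i(x)=\sum_l c_{il}\,x^{l-1}$. I would bound the numerator coefficients by the elementary symmetric functions of $(w_m)_{m\ne i}$ (whose $\ell^1$ sum is at most $2^{k-1}$ since $w_m\in[0,1]$) and the denominator $|\prod_{m\ne i}(w_i-w_m)|$ from below via the pairwise separation $s$ of the $w_m$, combined with the identity $(p-1)!(k-p)!=(k-1)!/\binom{k-1}{p-1}$; sharp Stirling estimates applied to both $(k-1)!$ and to the central binomial coefficient controlling $\min_p\binom{k-1}{p-1}^{-1}$ then yield a bound of the form
\[
\|V(w)^{-1}\|_\infty \le \frac{1}{\pi(k-1)}\Big(\frac{4e}{(k-1)s}\Big)^{k-1}.
\]

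Next I would set $r:=\rho/(3(k-1))$ and note that for any $w\in B_\infty(z,r)$, hypothesis \eqref{z_separation_cond} forces $w_i\in[0,1]$ and pairwise separation $|w_i-w_j|\ge \rho/(k-1)-2r=\rho/(3(k-1))$; applying the previous display with $s=\rho/(3(k-1))$ gives $\|DT_k(w)^{-1}\|_\infty\le (12e/\rho)^{k-1}/(\pi(k-1))$ throughout $B_\infty(z,r)$. I would then consider the initial value problem $\dot w(t)=DT_k(w(t))^{-1}(p-T_k(z))$, $w(0)=z$, designed so that $\frac{d}{dt}T_k(w(t))=p-T_k(z)$ and therefore $T_k(w(t))=T_k(z)+t(p-T_k(z))$ along any solution. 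The right-hand side is smooth as long as the components of $w(t)$ remain distinct, and by standard ODE theory the solution exists on a maximal interval $[0,t^*)$ determined by the exit time from $B_\infty(z,r)$. Bounding
\[
\|w(t)-z\|_\infty \le \int_0^t\|DT_k(w(s))^{-1}\|_\infty\,\|p-T_k(z)\|_\infty\,ds \le \frac{t\rho}{3\pi(k-1)} < r,
\]
where the inequality uses \eqref{close_moments_starting_point}, and invoking a standard bootstrap/continuity argument shows that $w(t)$ cannot reach $\partial B_\infty(z,r)$ before $t=1$, so $t^*>1$ and $w:=w(1)$ satisfies $T_k(w)=p$ with $\|w-z\|_\infty\le r$.

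The main obstacle is the conditioning estimate: obtaining the extra factor $1/(k-1)$ in the bound on $\|V(w)^{-1}\|_\infty$ requires carefully combining the Stirling lower bound for $(k-1)!$ with the refined Stirling upper bound on the central binomial coefficient. Without this factor, cancelling $(12e/\rho)^{k-1}$ against $(\rho/(12e))^{k-1}$ in the integrated bound would only produce $\|w(1)-z\|_\infty\le \rho/3$, which already exceeds $r=\rho/(3(k-1))$ for $k\ge 3$ and thus fails to confine the homotopy to $B_\infty(z,r)$; the factor $1/\pi(k-1)$ extracted from the Vandermonde inversion is exactly what makes the ODE return home.
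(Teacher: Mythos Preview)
Your proposal is correct and follows essentially the same route as the paper: bound $\|DT_k(w)^{-1}\|_\infty$ via the Vandermonde factorization (the paper cites Gautschi's formula $\|V(w)^{-1}\|_\infty=\max_i\prod_{j\ne i}\frac{1+w_j}{|w_i-w_j|}$, which is exactly what your Lagrange-coefficient computation produces) together with Stirling, and then run an ODE confined to the $\ell_\infty$-ball of radius $\rho/(3(k-1))$. The only cosmetic difference is the choice of flow---the paper uses the continuous Newton ODE $\dot w=U(w)^{-1}(p-T_k(w))$, giving $T_k(w(t))-p=e^{-t}(T_k(z)-p)$ and integrating to $t=\infty$, whereas you lift the straight segment via $\dot w=DT_k(w)^{-1}(p-T_k(z))$ and stop at $t=1$; since $\int_0^\infty e^{-s}\,ds=\int_0^1 ds=1$, the resulting displacement bounds coincide.
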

In words, the proposition shows that if the $z_i$ are well separated, then the image through $T_k$ of a ball around $z$ contains a ball in moment space (where the balls are in the $l_\infty$ metric), and it gives quantitative bounds on the radii of these balls.
Since the proof of this proposition is somewhat long, we delay it until after we explain how Theorem~\ref{one_dim_quadrature_thm} follows from the proposition and lemmas. 

Iterating the proposition, we obtain the following corollary.
\begin{corollary}\label{disjoint_subsets_cor}
Given $\mu:=\frac{1}{n}\sum_{i=1}^n \delta_{y_i}$ with all $0\le y_i\le 1$ and $k\ge 2$. Suppose that there exist $0<\rho\le 1$ and $s$ disjoint subsets $(z(r))_{r=1}^s$ of the $(y_i)$'s, each of size exactly $k$, such that
\begin{equation}\label{separation_condition}
\begin{split}
&\frac{\rho}{3(k-1)}\le z(r)_i \le 1-\frac{\rho}{3(k-1)},\\
&|z(r)_i-z(r)_j|\ge \frac{\rho}{k-1}
\end{split}
\end{equation}
for all $1\le r\le s$ and $1\le i,j\le k$ with $j\neq i$. Then for any $p\in\R^k$ satisfying
\begin{equation}\label{perturbation_condition}
\left|p_j-\int x^jd\mu\right|\le \frac{\rho s}{3n}\left(\frac{\rho}{12e}\right)^{k-1} \qquad 1\le j\le k,
\end{equation}
there exists $\mu'$ of the form $\mu':=\frac{1}{n}\sum_{i=1}^{n} \delta_{x_i}$ with all $0\le x_i\le 1$, such that $\int x^jd\mu'=p_j$ for all $j$.
\end{corollary}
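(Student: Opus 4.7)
The plan is to reduce the corollary to $s$ applications of Proposition~\ref{perturbation_of_moments_prop}, one for each disjoint subset $z(r)$, by distributing the required moment perturbation equally among the subsets. Write $\Delta := p - (\int x^j d\mu)_{j=1}^k \in \R^k$, so hypothesis~\eqref{perturbation_condition} says $|\Delta|_\infty \le \frac{\rho s}{3n}\left(\frac{\rho}{12e}\right)^{k-1}$. The idea is to modify only the points in $\bigcup_{r=1}^s z(r)$, leaving the remaining $y_i$'s untouched, and to ask each $z(r)$ to absorb exactly $1/s$ of the perturbation in the original moments, i.e., to produce $w(r) \in \R^k$ with
\[
T_k(w(r)) = T_k(z(r)) + \frac{n}{s}\Delta.
\]

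First I would check that Proposition~\ref{perturbation_of_moments_prop} applies to each $z(r)$. The separation and interior conditions \eqref{separation_condition} are exactly \eqref{z_separation_cond}, and the required bound on the target vector is
\[
\left|\frac{n}{s}\Delta\right|_\infty \;\le\; \frac{n}{s}\cdot\frac{\rho s}{3n}\left(\frac{\rho}{12e}\right)^{k-1} \;=\; \frac{\rho}{3}\left(\frac{\rho}{12e}\right)^{k-1},
\]
which is precisely \eqref{close_moments_starting_point}. So for each $r$ the proposition yields $w(r) \in \R^k$ with $|w(r)-z(r)|_\infty \le \frac{\rho}{3(k-1)}$ and $T_k(w(r)) = T_k(z(r)) + \frac{n}{s}\Delta$. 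Combined with the lower/upper bounds in \eqref{separation_condition}, the displacement bound gives $w(r)_i \in [0,1]$ for every $r$ and $i$.

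Next I would assemble $\mu'$ by starting from $\mu$ and replacing, in the multiset of nodes, each $z(r)$ by the corresponding $w(r)$, leaving all other $y_i$'s in place. Since the subsets $z(r)$ are disjoint and the total number of nodes is preserved, $\mu' = \frac{1}{n}\sum_{i=1}^n \delta_{x_i}$ for some $x_i \in [0,1]$. The moment change is then
\[
\int x^j d\mu' - \int x^j d\mu \;=\; \frac{1}{n}\sum_{r=1}^s\bigl(T_k(w(r))_j - T_k(z(r))_j\bigr) \;=\; \frac{1}{n}\cdot s\cdot\frac{n}{s}\Delta_j \;=\; \Delta_j,
\]
so $\int x^j d\mu' = p_j$ for every $1 \le j \le k$, as required.

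There is no serious obstacle: all the analytic content lives in Proposition~\ref{perturbation_of_moments_prop}, and the only thing to be careful about is the arithmetic bookkeeping---specifically, that dividing the perturbation by $s$ (and multiplying by $n$ to compensate for the $1/n$ weights in $\mu$) turns the corollary's assumption \eqref{perturbation_condition} into exactly the proposition's hypothesis \eqref{close_moments_starting_point}, and that the displacement bound furnished by the proposition matches the slack $\frac{\rho}{3(k-1)}$ built into the interior condition in \eqref{separation_condition} to guarantee the perturbed nodes stay in $[0,1]$.
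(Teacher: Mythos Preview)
Your proof is correct and follows essentially the same approach as the paper: apply Proposition~\ref{perturbation_of_moments_prop} to each of the $s$ subsets $z(r)$, with the factor $\frac{1}{n}$ accounting for the normalization in $\mu$. The only cosmetic difference is that the paper phrases it as moving the moments ``in the direction of $p$'' one subset at a time (a sequential picture), whereas you distribute the total perturbation $\Delta$ equally among the subsets up front (a parallel picture); the arithmetic and the conclusion are identical.
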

We emphasize that in this corollary and the rest of the proof, by disjoint subsets $z(r)\subseteq (y_i)$ we mean that we may choose indices $(i^r_1,\ldots, i^r_k)_{r=1}^s$ such that $z(r)_j=y_{i^r_j}$ and each $i$ appears at most once in all these index sets. Note that with this convention, if the $(y_i)$ contain a certain value multiple times, then it may happen that the $(z(r))$ also contain this value multiple times.
\begin{proof}
The corollary follows by applying Proposition~\ref{perturbation_of_moments_prop} to each of the subsets $z(r)$, each time changing the moments of the measure in the direction of the vector $p$. Note the additional factor $\frac{1}{n}$ in \eqref{perturbation_condition} as compared to \eqref{close_moments_starting_point}. This factor appears since $T_k$ is an unnormalized sum whereas $\mu'$ contains the normalization factor $\frac{1}{n}$.
\end{proof}
Finally, it remains to show that if a measure does not have large atoms, then the simple approximation of Lemma~\ref{simple_approximation_lemma} contains many disjoint subsets as in Corollary~\ref{disjoint_subsets_cor}.
\begin{lemma}\label{subsets_exist_lemma}
Let $\sigma$ be a probability measure with $\sigma([0,1])=1$. For $n\in\N$, let $(y_i)_{i=1}^n$ be the simple approximation \eqref{simple_approximation}. Then for each integer $k\ge 2$ such that $n\ge k(k+3)$ there exist $\lceil\frac{n}{k+3}\rceil$ disjoint subsets $(z(r))$ of the $(y_i)$, each of size exactly $k$, which satisfy \eqref{separation_condition} with $\rho=(k-1)R_\sigma(\frac{1}{k+3})$.
\end{lemma}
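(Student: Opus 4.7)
The plan is to exploit the fact that the $y_i$ are empirical quantiles of $\sigma$ to obtain a spacing estimate, and then to extract the required subsets by an explicit interleaving of indices. Set $\rho':=R_\sigma\!\left(\frac{1}{k+3}\right)$ so that the required separation $\rho/(k-1)$ equals $\rho'$ and the endpoint margin $\rho/(3(k-1))$ equals $\rho'/3$.

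First I would record the quantile inequality. Since $y_j=\min\{y:\sigma([0,y])\ge j/n\}$, every $y<y_i$ satisfies $\sigma([0,y])<i/n$, so $\sigma([0,y_i))\le i/n$ and hence
\[
\sigma([y_i,y_j]) \;=\; \sigma([0,y_j]) - \sigma([0,y_i)) \;\ge\; \frac{j-i}{n} \qquad \text{for all } i\le j.
\]
Let $s:=\lceil n/(k+3)\rceil$. Whenever $j-i\ge s$ we get $\sigma([y_i,y_j])\ge 1/(k+3)$, and the definition of $R_\sigma$ forces $|y_j-y_i|\ge\rho'$.

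Next I would bound the number of $y_i$ lying within $\rho'/3$ of the endpoints. The interval $[0,\rho'/3]$ has length strictly less than $\rho'$, so it carries $\sigma$-mass strictly less than $1/(k+3)$ (else $R_\sigma(1/(k+3))\le\rho'/3$, contradicting its definition). Thus the number $a$ of indices $i$ with $y_i<\rho'/3$ satisfies $a\le\lfloor n\sigma([0,\rho'/3))\rfloor<n/(k+3)\le s$, so $a\le s-1$; by symmetry the number $b$ of indices with $y_i>1-\rho'/3$ also satisfies $b\le s-1$. Consequently $y_i\in[\rho'/3,\,1-\rho'/3]$ for every $a<i\le n-b$.

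Finally I would define
\[
z(r)_i \;:=\; y_{a+r+(i-1)s}, \qquad 1\le r\le s,\;\; 1\le i\le k.
\]
The map $(r,i)\mapsto a+r+(i-1)s$ is a bijection onto $\{a+1,\ldots,a+ks\}$ (read off $r$ from the residue modulo $s$), so the $s$ subsets are disjoint in the sense of the lemma and each has size exactly $k$. Within a single $z(r)$ consecutive indices differ by $s$, so the first step gives $|z(r)_i-z(r)_j|\ge\rho'$ for all $i\ne j$. It remains to verify that the whole index range $\{a+1,\ldots,a+ks\}$ sits inside $\{a+1,\ldots,n-b\}$, i.e.\ $a+b+ks\le n$. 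Using $a,b\le s-1$ this reduces to $(k+2)s-2\le n$, and from $s\le n/(k+3)+1$ a direct computation shows this is equivalent to $n\ge k(k+3)$, precisely our hypothesis.

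The only real subtlety is this last piece of arithmetic: the interleaving is tight at $n=k(k+3)$, and one has to track the floor/ceiling bounds $a,b\le s-1$ carefully to see that just enough room is left for $ks$ interior indices. Everything else is straightforward bookkeeping once the quantile estimate of the first step is in hand.
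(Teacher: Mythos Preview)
Your argument follows the same interleaving idea as the paper: partition the indices into $s=\lceil n/(k+3)\rceil$ arithmetic progressions with common difference $s$, and use the quantile estimate $\sigma([y_i,y_j])\ge (j-i)/n$ to get the separation. The paper's only real difference is that it does not count the boundary indices $a,b$ at all; it simply sets $z(r)_j:=y_{js+r-1}$, so the used indices run from $s$ to $(k+1)s-1$, and then checks the endpoint condition directly via $y_s-y_0\ge R_\sigma(s/n)\ge\rho'$ and $y_n-y_{(k+1)s-1}\ge R_\sigma\!\bigl(1-\tfrac{(k+1)s-1}{n}\bigr)\ge\rho'$, the latter reducing exactly to $n\ge k(k+3)$.

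One small gap in your version: the ``by symmetry'' step for $b$ is not quite sound, because the quantile map $i\mapsto y_i$ is defined through $\sigma([0,\cdot])$ and is not symmetric under reflection about $1/2$. Concretely, from $\sigma((1-\rho'/3,1])<1/(k+3)$ you only get $b\le s$ (not $b\le s-1$) when $(k+3)\mid n$. This does not break the proof---with $a\le s-1$ and $b\le s$ the needed inequality becomes $(k+2)s-1\le n$, which still follows from $n\ge k(k+3)$ by the same case split---but the symmetry claim as stated is inaccurate. The paper's fixed offset avoids this bookkeeping entirely.
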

\begin{proof}
Define $y_0:=0$. Note that by definition, we have $y_j-y_i\ge R_\sigma(\frac{j-i}{n})$ for $0\le i<j\le n$. Let $i_0:=\lceil\frac{n}{k+3}\rceil$ and define the subsets $(z(r))_{r=1}^{i_0}$ by
\begin{equation*}
z(r)_j:=y_{ji_0+r-1}
\end{equation*}
for $1\le j\le k$. We need to verify that the conditions in \eqref{separation_condition} hold with $\rho=(k-1)R_\sigma(\frac{1}{k+3})$. To check the first condition, note that since $R_\sigma$ is non-decreasing and $\frac{n}{k+3}\le i_0\le\frac{n}{k+3}+1$, we have
\begin{equation*}
\begin{split}
z(r)_i&\ge z(1)_1=y_{i_0}\ge R_\sigma\left(\frac{i_0}{n}\right)\ge R_\sigma\left(\frac{1}{k+3}\right)\ge \frac{1}{3}R_\sigma\left(\frac{1}{k+3}\right),\\
z(r)_i&\le z(i_0)_k=y_{(k+1)i_0-1}\le 1-R_\sigma\left(1-\frac{(k+1)i_0-1}{n}\right)\le \\
&\le1-R_\sigma\left(1-\frac{k+1}{k+3}-\frac{k}{n}\right)\le 1-R_\sigma\left(\frac{1}{k+3}\right)\le 1-\frac{1}{3}R_\sigma\left(\frac{1}{k+3}\right)
\end{split}
\end{equation*}
using the assumption that $n\ge k(k+3)$. The second condition in \eqref{separation_condition} follows similarly.
\end{proof}
\begin{remark}
We note that there do exist $\sigma$ with atoms of size $\frac{1}{k+1}$ for which the $(y_i)$ of \eqref{simple_approximation} do not contain even one subset which satisfies the separation condition \eqref{separation_condition} for a positive $\rho$. For example, $\sigma=\frac{1}{k+1}\sum_{i=1}^{k+1}\delta_{\frac{i-1}{k}}$. Hence the above lemma is close to optimal.
\end{remark}
Putting all the above claims together, we may finish the proof of Theorem~\ref{one_dim_quadrature_thm}.
\begin{proof}[Conclusion of the proof of Theorem~\ref{one_dim_quadrature_thm}]
Let $\rho$ and $r$ be as in the theorem. Fix an integer $n\ge r^{-1}$ and a vector $p\in\R^k$ satisfying \eqref{p_condition}. By Lemma~\ref{simple_approximation_lemma}, we have $(y_i)_{i=1}^n\subseteq[0,1]$ such that for all $j\in \N$ we have
\begin{equation}\label{first_approx}
\left|\frac{1}{n}\sum_{i=1}^n y_i^j-\int x^jd\sigma(x)\right|\le \frac{1}{n}.
\end{equation}
Note that using the 3'rd property of $R_\sigma$ appearing in the beginning of the section, we have $\rho\le 1$ and so $n\ge k(k+3)$. Hence, by Lemma~\ref{subsets_exist_lemma}, there exist $s:=\lceil\frac{n}{k+3}\rceil$ disjoint subsets $(z(r))_{r=1}^s$ of the $(y_i)$, each of size exactly $k$, which satisfy \eqref{separation_condition} for the given $\rho$. Hence, by Corollary~\ref{disjoint_subsets_cor}, for any $p'\in\R^k$ satisfying
\begin{equation}\label{perturb_approx}
\left|p'_j - \frac{1}{n}\sum_{i=1}^n y_i^j\right|\le \frac{\rho s}{3n}\left(\frac{\rho}{12e}\right)^{k-1}
\end{equation}
for all $j$, there exist $(x_i)_{i=1}^n\subseteq[0,1]$ such that $\frac{1}{n}\sum_{i=1}^n x_i^j = p'_j$ for all $1\le j\le k$. Since $p$ satisfies \eqref{p_condition}, equations \eqref{first_approx} and \eqref{perturb_approx} will imply the theorem if
\begin{equation*}
r+\frac{1}{n}\le \frac{\rho s}{3n}\left(\frac{\rho}{12e}\right)^{k-1}.
\end{equation*}
This now follows by the definition of $s$ and the condition $n\ge r^{-1}$.
\end{proof}

\subsubsection*{Proof of Proposition \ref{perturbation_of_moments_prop}}
Fix an integer $k\ge 2$. We first define some notation: for $w\in\R^k$, let $V(w)$ be the Vandermonde matrix defined by
\begin{equation*}
V(w):=\begin{pmatrix}1&\cdots&1\\w_1&\cdots&w_k\\\vdots&\vdots&\vdots\\w_1^{k-1}&\cdots&w_k^{k-1}\end{pmatrix},
\end{equation*}
and let $U(w)$ be a slightly modified version defined by
\begin{equation*}
U(w):=\begin{pmatrix}1&\cdots&1\\2w_1&\cdots&2w_k\\\vdots&\vdots&\vdots\\kw_1^{k-1}&\cdots&kw_k^{k-1}\end{pmatrix}.
\end{equation*}
For a matrix $A\in M_{k\times k}$, define $\lVert A\rVert_{\infty}:=\max_{1\le i\le k} \sum_{j=1}^k |A_{ij}|$, the infinity norm of the matrix. We continue by citing (a special case of) a theorem of Gautschi about norms of inverses of Vandermonde matrices \cite{G62}.
\begin{theorem}\label{Vandermonde_estimate}(Gautschi)
For $w\in\R^k$ satisfying $w_i\ge 0$ for all $i$ and $w_i\neq w_j$ for all $i\neq j$, we have
\begin{equation*}
\lVert V(w)^{-1}\rVert_{\infty}=\max_{1\le i\le k}\prod_{\substack{j=1\\j\neq i}}^k\frac{1+w_j}{|w_i-w_j|}.
\end{equation*}
\end{theorem}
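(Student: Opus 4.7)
The plan is to identify $V(w)^{-1}$ explicitly via Lagrange interpolation, and then extract the $\infty$-norm by exploiting the sign pattern that arises when all nodes are non-negative.

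First, I will realize the entries of $V(w)^{-1}$ in terms of the Lagrange basis polynomials $L_i(x):=\prod_{r\neq i}(x-w_r)/(w_i-w_r)$. Solving $V(w)p=e_j$ means $\sum_i p_i w_i^{\ell-1}=\delta_{\ell j}$ for every $1\le \ell\le k$, i.e.\ $\sum_i p_i Q(w_i)$ equals the coefficient of $x^{j-1}$ in $Q$ for every polynomial $Q$ of degree at most $k-1$. Taking $Q=L_i$ and using the interpolation identity $L_i(w_s)=\delta_{is}$ immediately isolates $p_i$ as the coefficient of $x^{j-1}$ in $L_i$. Thus
\begin{equation*}
V(w)^{-1}_{ij}=\text{coefficient of }x^{j-1}\text{ in }L_i(x).
\end{equation*}

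Second, I will evaluate each row sum. The $i$-th row sum of $|V(w)^{-1}|$ is the sum of absolute values of the coefficients of $L_i$. Writing
\begin{equation*}
L_i(x)=\frac{P_i(x)}{\prod_{r\neq i}(w_i-w_r)},\qquad P_i(x):=\prod_{r\neq i}(x-w_r),
\end{equation*}
the denominator contributes the positive constant $\prod_{r\neq i}|w_i-w_r|$. The key observation is that $P_i$ is a monic polynomial whose roots $w_r$ are all non-negative, so its coefficients alternate in sign. Consequently, the sum of absolute values of the coefficients of $P_i$ equals $|P_i(-1)|=\prod_{r\neq i}(1+w_r)$. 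Combining these two computations, the $i$-th row sum of $|V(w)^{-1}|$ equals $\prod_{r\neq i}(1+w_r)/|w_i-w_r|$, and taking the maximum over $i$ yields the formula.

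The only step that really uses anything beyond bookkeeping will be the sign-alternation: it relies on $w_r\ge 0$ essentially (for general real nodes one only gets the inequality $\sum|\text{coeffs}|\ge|P_i(-1)|$, and the claimed equality can fail). Beyond that, the argument is elementary---matching indices between the Vandermonde inverse and the Lagrange basis via $V(w)\cdot V(w)^{-1}=I\Leftrightarrow L_i(w_s)=\delta_{is}$, and invoking the definition $\lVert A\rVert_\infty=\max_i\sum_j|A_{ij}|$. No spectral or analytic input is required; the proof is purely polynomial manipulation.
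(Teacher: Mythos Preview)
Your argument is correct. The identification of $V(w)^{-1}_{ij}$ with the coefficient of $x^{j-1}$ in the Lagrange basis polynomial $L_i$ is right, and the sign-alternation step is exactly what makes the equality (rather than merely the upper bound) go through: since each $w_r\ge 0$, the elementary symmetric functions of the $w_r$ are non-negative, so the coefficients of $P_i(x)=\prod_{r\neq i}(x-w_r)$ alternate in sign and the $\ell_1$-norm of the coefficient vector equals $|P_i(-1)|=\prod_{r\neq i}(1+w_r)$.

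Note, however, that the paper does not prove this statement at all: it is quoted as Gautschi's theorem with a reference to \cite{G62} and used as a black box to derive Corollary~\ref{perturbed_Vandermonde_norm_estimates}. What you have written is, in fact, essentially Gautschi's original argument, so there is no divergence of approach to compare---you have simply supplied the proof that the paper chose to cite rather than reproduce.
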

We immediately deduce
\begin{corollary}\label{perturbed_Vandermonde_norm_estimates}
For $w\in\R^k$ satisfying $w_i\ge 0$ for all $i$ and $w_i\neq w_j$ for all $i\neq j$, we have
\begin{equation*}
\lVert U(w)^{-1}\rVert_{\infty}\le \max_{1\le i\le k}\prod_{\substack{j=1\\j\neq i}}^k\frac{1+w_j}{|w_i-w_j|}.
\end{equation*}
In particular, if $0\le w_i\le 1$ for all $i$ and there exists $0<\sigma\le 1$ such that $|w_i-w_j|\ge \frac{\sigma}{k-1}$ for all $i\neq j$ then
\begin{equation*}
\lVert U(w)^{-1}\rVert_{\infty}\le \frac{1}{k}\left(\frac{4e}{\sigma}\right)^{k-1}.
\end{equation*}
\end{corollary}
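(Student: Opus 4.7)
The plan is to factor $U(w) = D\,V(w)$ where $D = \mathrm{diag}(1,2,\ldots,k)$, which is immediate from comparing rows (the $i$-th row of $U(w)$ is $i$ times the $i$-th row of $V(w)$). Inverting yields $U(w)^{-1} = V(w)^{-1} D^{-1}$, and $D^{-1}$ is diagonal with entries $1/j \in (0,1]$. Right-multiplication by $D^{-1}$ scales the $j$-th column of $V(w)^{-1}$ by $1/j \le 1$, and since the $\infty$-matrix norm is the maximum absolute row sum, this operation can only decrease each row sum. Hence $\lVert U(w)^{-1}\rVert_{\infty} \le \lVert V(w)^{-1}\rVert_{\infty}$, and the first claimed inequality follows by applying Gautschi's Theorem~\ref{Vandermonde_estimate} to $V(w)^{-1}$.

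For the quantitative second bound, observe that both sides are invariant under permutations of the $w_i$, so I may assume $w_1 < w_2 < \cdots < w_k$. Under this ordering, the hypothesis $|w_i-w_j|\ge \sigma/(k-1)$ strengthens to $|w_i-w_j|\ge |i-j|\sigma/(k-1)$, giving
\[
\prod_{j\neq i}|w_i-w_j|\;\ge\;\left(\frac{\sigma}{k-1}\right)^{k-1}(i-1)!\,(k-i)!.
\]
Since $1+w_j\le 2$ for each $j$, we have $\prod_{j\neq i}(1+w_j)\le 2^{k-1}$, so the Gautschi expression is bounded above, for each $i$, by
\[
\frac{2^{k-1}(k-1)^{k-1}}{(i-1)!\,(k-i)!\,\sigma^{k-1}}.
\]

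Maximizing over $i$ amounts to minimizing $(i-1)!(k-i)!$, which is attained at the central index; rewriting $(i-1)!(k-i)! = (k-1)!/\binom{k-1}{i-1}$ produces a bound involving the central binomial coefficient and the factor $(k-1)^{k-1}/(k-1)!$. The endgame is to combine two standard Stirling estimates, $\binom{k-1}{\lfloor(k-1)/2\rfloor}\le 2^{k-1}\sqrt{2/(\pi(k-1))}$ and $(k-1)^{k-1}/(k-1)!\le e^{k-1}/\sqrt{2\pi(k-1)}$. The two $\sqrt{k-1}$ denominators multiply precisely to $\pi(k-1)$, the exponential pieces assemble into $(4e/\sigma)^{k-1}$, and the trivial check $\pi(k-1)\ge k$ for $k\ge 2$ converts the denominator to $k$, delivering the stated bound. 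The only real subtlety is in this last step: one must extract a $\sqrt{k-1}$ factor from \emph{both} Stirling estimates, since using only one would give merely a $1/\sqrt{k}$ prefactor rather than the $1/k$ required by the corollary.
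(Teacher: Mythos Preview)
Your proof is correct and follows essentially the same approach as the paper: the factorization $U(w)=DV(w)$ combined with Gautschi's theorem for the first part, and the separation bound $\prod_{j\neq i}|w_i-w_j|\ge(\sigma/(k-1))^{k-1}(i-1)!(k-i)!$ followed by Stirling for the second. The paper's version treats odd $k$ explicitly via $[(\tfrac{k-1}{2})!]^2$ and waves at the even case, whereas your rewriting through $\binom{k-1}{i-1}$ handles both parities uniformly and makes the appearance of the $1/(\pi(k-1))$ factor cleaner; otherwise the arguments coincide.
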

\begin{proof}
Noting that $U(w) = DV(w)$ where $D$ is a diagonal matrix with $1,2,\ldots, k$ on its diagonal, we see that
\begin{equation*}
\lVert U(w)^{-1}\rVert_{\infty} \le \lVert V(w)^{-1}\rVert_{\infty}\lVert D^{-1}\rVert_{\infty}=\lVert V(w)^{-1}\rVert_{\infty},
\end{equation*}
so the first part of the corollary follows from Theorem \ref{Vandermonde_estimate}. For the second part, assume for simplicity that $k$ is odd. For the case $k=1$ there is nothing to prove, for $k\ge 3$ the assumptions and Stirling's approximation imply
\begin{equation*}
\begin{split}
\max_{1\le i\le k}\prod_{\substack{j=1\\j\neq i}}^k\frac{1+w_j}{|w_i-w_j|}&\le 2^{k-1}\left(\left(\frac{\sigma}{k-1}\right)^{k-1}\left[\left(\frac{k-1}{2}\right)!\right]^2\right)^{-1}\le\\
&\le \frac{1}{\pi(k-1)}2^{k-1}\left(\frac{2e}{\sigma}\right)^{k-1}\le\frac{1}{k}\left(\frac{4e}{\sigma}\right)^{k-1}.
\end{split}
\end{equation*}
Similarly, one can check that the required estimate holds when $k\ge 2$ is even.
\end{proof}
We continue the proof by defining a vector field $G:\R^k\to\R^k$ and an ODE,
\begin{equation*}
\begin{split}
G(w)&:=U(w)^{-1}(p-T_k(w)),\\
\dot{w}(t)&:=G(w(t)) \quad \text{ and } \quad w(0):=z.
\end{split}
\end{equation*}
By standard existence theorems for ODEs, 
there exists a solution to the ODE $w:[0,\tau_*)$ defined up to the first time that $G(w(t))$ is undefined, i.e., the first time that $w_i(t)=w_j(t)$ for some $i\neq j$. $\tau_*=\infty$ if such a time does not exist. Let also $t_*$ be the first time that $|w(t)-z|_\infty=\frac{\rho}{3(k-1)}$, or infinity if such a time does not exist. It is clear from the separation conditions \eqref{z_separation_cond} on the coordinates of $z$ that $t_*\le \tau_*$ with a strict inequality if $\tau_*<\infty$.

Note that the Jacobian $\frac{d}{dw} T_k(w) = U(w)$. Hence, for each $t<\tau_*$,
\begin{equation*}
\frac{d}{dt}(T_k(w(t))-p) = \frac{d}{dw} T_k(w(t)) \dot{w}(t)=p-T_k(w(t))
\end{equation*}
from which it follows that $T_k(w(t))-p=e^{-t}(T_k(z)-p)$. We deduce that if $t_*=\infty$, then since for $t<t_*$, $w(t)$ lies in a compact set, we may extract a subsequence of $w(t)$ converging to some $w$ with $|w-z|_\infty\le\frac{\rho}{3(k-1)}$. By continuity of $T_k$, this $w$ satisfies $T_k(w)=p$ as required. Hence, we assume, in order to get a contradiction, that $t_*<\infty$. We now calculate
\begin{multline*}
w(t_*)-z = \int_0^{t_*} \dot{w}(s)ds = \int_0^{t_*} U(w(s))^{-1}(p-T_k(w(s)))ds =\\
=\int_0^{t_*} e^{-s}U(w(s))^{-1}ds(p-T_k(z)).
\end{multline*}
Hence, noting that by Corollary \ref{perturbed_Vandermonde_norm_estimates} with $\sigma=\frac{\rho}{3}$ we have for $s\le t_*$ that
\begin{equation*}
\lVert U(w(s))^{-1}\rVert_{\infty}\le \frac{1}{k}\left(\frac{12e}{\rho}\right)^{k-1},
\end{equation*}
we obtain (using assumption \eqref{close_moments_starting_point})
\begin{multline*}
|w(t_*)-z|_{\infty}\le \frac{1}{k}\left(\frac{12e}{\rho}\right)^{k-1}\int_0^{t_*} e^{-s}ds |p-T_k(z)|_\infty<\\
<\frac{1}{k}\left(\frac{12e}{\rho}\right)^{k-1} \left(\frac{\rho}{12e}\right)^{k-1}\frac{\rho }{3}<\frac{\rho}{3(k-1)}
\end{multline*}
contradicting the definition of $t_*$. Thus the proposition is proven.

\subsubsection{Proof of Theorem~\ref{large_atom_upper_bound_theorem}}
Recalling the notation of Theorem~\ref{large_atom_upper_bound_theorem}, let us fix $0<\eps<1$ satisfying 
\begin{equation}\label{eps_condition_proof}
 \frac{\eps}{\sigma^t_\eps([0,1])}<\frac{2}{2k+7}
\end{equation}
and
\begin{equation}\label{n_definition}
 n\ge\max\left(\frac{1}{r\sigma^t_\eps([0,1])},\frac{2k+6}{\eps}\right).
\end{equation}
 As noted in Remark~\ref{atomic_theorem_remark}, condition \eqref{eps_condition_proof} implies that the right hand side of \eqref{n_definition} is finite. Let $A=\{x\ |\ \sigma(\{x\})>\frac{2k+7}{2k+6}\eps\}$ and define a measure
\begin{equation*}
 \sigma_{2,n}:=\sum_{x\in A} \frac{1}{n}\lfloor n(\sigma(\{x\})-\eps)\rfloor\delta_{x}.
\end{equation*}
In words, $\sigma_{2,n}$ has an atom for every atom $x\in A$ and the mass of this atom is the largest multiple of $\frac{1}{n}$ which is no larger than $\sigma(\{x\})-\eps$. Define also
\begin{equation*}
 \sigma_{1,n}:=\sigma - \sigma_{2,n}.
\end{equation*}
Then, by our definitions and \eqref{n_definition}, we have
\begin{equation}\label{measure_sandwich}
 \sigma_\eps^t(B)\le \sigma_{1,n}(B)\le \frac{2k+7}{2k+6}\sigma_{\eps}^t(B)
\end{equation}
for every Borel set $B$. 
We now let $q:=\sigma_{1,n}([0,1])$ so that $1-q=\sigma_{2,n}([0,1])$. Note that $q>0$ and is a multiple of $\frac{1}{n}$ by the definition of $\sigma_{2,n}$. Letting $\sigma_{1,n}':=\frac{\sigma_{1,n}}{q}$, we have
\begin{equation}\label{sigma_decomposition}
 \sigma = q\sigma_{1,n}'+\sigma_{2,n}.
\end{equation}
We claim that there exists a Chebyshev-type quadrature for $\sigma_{1,n}'$ of degree at least $k$ and having exactly $qn$ (not necessarily distinct) nodes in $[0,1]$ ($qn$ is an integer!). By Theorem~\ref{one_dim_quadrature_thm}, we know that such a quadrature exists if 
\begin{equation}\label{n_condition}
 qn\ge\frac{1}{r'}
\end{equation}
where
\begin{equation*}
\begin{split}
 \rho'&:=(k-1)R_{\sigma_{1,n}'}\left(\frac{1}{k+3}\right),\\
r'&:=\frac{\rho'}{6(k+3)}\left(\frac{\rho'}{12e}\right)^{k-1}.
\end{split}
\end{equation*}
By \eqref{measure_sandwich} we have that $\sigma_{1,n}'([x,y])\le \frac{2k+7}{2k+6}\sigma_{\eps}'([x,y])$ for any $x\le y$. Hence $R_{\sigma_{1,n}'}(\delta)\ge R_{\sigma_{\eps}'}(\frac{2k+6}{2k+7}\delta)$ for any $0<\delta<\frac{1}{2}$ and in particular $\rho'\ge \rho$ and consequently $r'\ge r$ ($\rho$ and $r$ were defined in the statement of the theorem). In addition, by \eqref{measure_sandwich}, we have that $q\ge \sigma_\eps^t([0,1])$. We conclude that \eqref{n_condition} holds by \eqref{n_definition}.

To finish, we have obtained a Chebyshev-type quadrature for $\sigma_{1,n}'$ of degree at least $k$,
\begin{equation*}
 \mu_1:=\frac{1}{qn}\sum_{i=1}^{qn} \delta_{x_i}
\end{equation*}
for some $\{x_i\}_{i=1}^{qn}\subseteq[0,1]$. Defining
\begin{equation*}
 \mu:=\sigma_{2,n} + \frac{1}{n}\sum_{i=1}^{qn} \delta_{x_i},
\end{equation*}
it is straightforward to check using \eqref{sigma_decomposition} that $\mu$ is a Chebyshev-type quadrature of degree at least $k$ for $\sigma$ having exactly $n$ (not necessarily distinct) nodes in $[0,1]$.

\subsubsection{Lower bounds for the number of nodes}\label{lower_bounds_sec}
In this section we prove Theorem~\ref{general_lower_bound}, Corollary~\ref{lower_bound_theorem} and Theorem~\ref{second_example_theorem}.
\begin{proof}[Proof of Theorem~\ref{general_lower_bound}] Fix an odd integer $k\ge 3$ and let $\sigma$ be a probability measure on $\R$ with $\sigma(\{0\})<1$ and  $\int |x|^k d\sigma(x)<\infty$. Denote $m_j:=\int x^j d\sigma(x)$ for $1\le j\le k$. If $m_k=0$, the theorem is trivial. If $m_k<0$, we define $\tilde{\sigma}$, the ``reflection through 0 of $\sigma$'', by $\tilde{\sigma}(A):=\sigma(-A)$ for measurable sets $A$. It is straightforward that $\int x^j d\tilde{\sigma}(x)=(-1)^jm_j$ for $1\le j\le k$ implying that the RHS of the bound \eqref{n_sigma_0_lower_bound} of the theorem is the same for $\sigma$ and $\tilde{\sigma}$. Since it is also straightforward that $n_{\tilde{\sigma}}^0(k)=n_\sigma^0(k)$, we see that it is sufficient to prove the theorem with $\tilde{\sigma}$ replacing $\sigma$. Noting that $\int x^kd\tilde{\sigma}(x)=-m_k>0$ (since $k$ is odd), we shall henceforth assume, WLOG, that $m_k>0$.

Set $a:=\frac{m_k}{m_{k-1}}>0$ (using that $m_{k-1}>0$ since $k$ is odd and $\sigma(\{0\})<1$). Suppose that, for some $n$, $\mu:=\frac{1}{n}\sum_{i=1}^n \delta_{x_i}$ for $(x_i)_{i=1}^n\subset \R$ is a Chebyshev-type quadrature formula for $\sigma$ of degree at least $k$. Letting $f(x):=x^{k-1}(a-x)$, we then have that
\begin{equation}\label{f_int_zero}
\int f(x)d\mu(x) = \int f(x) d\sigma(x) = am_{k-1}-m_k = 0.
\end{equation}
We continue by noting that, since $k$ is odd, $f(0)=0$ and $f(x)>0$ for all $x\in (-\infty,a)\setminus\{0\}$. Thus, \eqref{f_int_zero} implies that either $\mu=\delta_0$ or $\mu([a,\infty))>0$. However, the former option is impossible since $\sigma\neq\delta_0$, which implies $m_2>0$, and $\mu$ has the same second moment as $\sigma$. It follows that if we denote $\xi:=\max\{x_i\}_{i=1}^n$, then $\xi\ge a>0$. Denoting now $g(x):=\frac{x^{k-1}}{a^{k-1}}$, we have that
\begin{equation*}
\int g(x)d\mu(x) = \int g(x) d\sigma(x) = \frac{m_{k-1}}{a^{k-1}}=\frac{m_{k-1}^k}{m_k^{k-1}}.
\end{equation*}
However, since $g(x)\ge 0$ for all $x$ (using that $k$ is odd) and $g(\xi)\ge g(a)=1$ (using that $\xi\ge a>0$), it follows that
\begin{equation*}
\frac{1}{n}=\frac{1}{n}g(a) \le \frac{1}{n}g(\xi)\le \int g(x)d\mu(x) = \frac{m_{k-1}^k}{m_k^{k-1}},
\end{equation*}
whence $n\ge \frac{m_k^{k-1}}{m_{k-1}^k}$ as required.
\end{proof}

\begin{proof}[Proof of Corollary~\ref{lower_bound_theorem}] Let $d\sigma(x):=1_{[0,\infty)}(x)\exp(-x)dx$ be the exponential distribution. Recall that $\sigma([x,\infty))=\exp(-x)$ for $x\ge 0$ and that $\int x^jd\sigma(x)=j!$ for $j\in\N$. Fix an odd integer $k\ge 3$ and define a new measure $\sigma_k'$ by
\begin{equation*}
\sigma_k'(A):=c_k\sigma(A\cap [0,2k])
\end{equation*}
for measurable sets $A$, where $c_k:=(1-\exp(-2k))^{-1}$ is chosen so that $\sigma_k'$ is a probability distribution. Define also the rescaling, $\sigma_k$, of $\sigma_k'$ to the interval $[0,1]$ by
\begin{equation*}
\sigma_k(A):=\sigma_k'(A*2k)
\end{equation*}
for measurable sets $A$, where $A*2k:=\{2kx\ |\ x\in A\}$. Noting that $\sigma_k$ is absolutely continuous, supported on $[0,1]$ and having density bounded above by $Ck$ for some $C>0$, we claim that if $k$ is sufficiently large, $\sigma_k$ satisfies the corollary (for that $k$). To see this, we shall prove below that if $k$ is sufficiently large then
\begin{equation}\label{moment_bounds_for_sigma_k}
\frac{c_k}{2} j! \le \int x^jd\sigma_k'(x) \le c_k j! \qquad \text{for $j=k-1$ and $j=k$}.
\end{equation}
From these inequalities we deduce, using that $c_k\to 1$ as $k\to\infty$ and that by Stirling's formula, $(k-1)!\sim \sqrt{2\pi k}\left(\frac{k-1}{e}\right)^{k-1}$ as $k\to\infty$,
\begin{equation*}
\frac{(\int x^k d\sigma_k)^{k-1}}{(\int x^{k-1}d\sigma_k)^k}=\frac{(\int x^k d\sigma'_k)^{k-1}}{(\int x^{k-1}d\sigma'_k)^k}\ge \frac{k^{k-1}}{c_k 2^{k-1}(k-1)!}\sim\sqrt{\frac{2}{\pi k}}\left(\frac{e}{2}\right)^k>\frac{1}{2\sqrt{k}}\left(\frac{e}{2}\right)^k
\end{equation*}
as $k\to\infty$. Thus, by Theorem~\ref{general_lower_bound}, for sufficiently large $k$,
\begin{equation*}
n_{\sigma_k}^0(k)\ge \frac{1}{2\sqrt{k}}\left(\frac{e}{2}\right)^k
\end{equation*}
as required. It remains only to prove \eqref{moment_bounds_for_sigma_k}. The second inequality of \eqref{moment_bounds_for_sigma_k} follows from the fact that $\int x^jd\sigma(x)=j!$. To see the first inequality, note first that
\begin{equation}\label{j_moment_equality}
\int x^j d\sigma_k(x) = c_k\left(j! - \int_{2k}^\infty x^j d\sigma(x)\right).
\end{equation}
Second, note that for $x\ge 2k$ and $j\le k$ we have $x^j\le (2k)^j\exp\left(\frac{x-2k}{2}\right)$, which can be seen by taking logarithms and differentiating. Thus, for $j=k-1$ and $j=k$, if $k$ is sufficiently large,
\begin{equation*}
\int_{2k}^\infty x^j d\sigma(x)\le \frac{(2k)^j}{e^k}\int_{2k}^\infty \exp\left(-\frac{x}{2}\right)dx\le \frac{2(2k)^j}{e^{2k}}\le \frac{j!}{2}
\end{equation*}
which, when plugged into \eqref{j_moment_equality}, proves the first inequality.
\end{proof}

\begin{proof}[Proof of Theorem~\ref{second_example_theorem}]
The bounds \eqref{example_n_sigma_0_bound} and \eqref{example_n_sigma_bound} follow directly from the bounds \eqref{even_n_bound} and \eqref{odd_n_bound} and the definitions of $d_n(\sigma_0), n_{\sigma_0}^0(k)$ and $n_{\sigma_0}(k)$. The bound \eqref{even_n_bound} follows from Bernstein's Theorem~\ref{Bernstein_CTQ} by replicating the Chebyshev-type quadrature given by the upper bound of that theorem to each of the two intervals in the support of $\sigma_0$. To see \eqref{odd_n_bound}, we will need the following definition and theorems. We call a Chebyshev-type quadrature formula \eqref{Chebyshev_type_quadrature} symmetric if the measures $\frac{1}{n}\sum_{i=1}^n\delta_{x_i}$ and $\frac{1}{n}\sum_{i=1}^n\delta_{-x_i}$ are equal. We will use a special case of a Theorem of F\"orster and Ostermeyer \cite{FO86}.
\begin{theorem}(\cite[Section 3, Corollaries 1 and 2]{FO86}\label{Forster_Ostermeyer_theorem}
If a probability measure $\sigma$ on a bounded interval has a density $w(x)$ satisfying $w(x)=w(-x)$ for all $x$, then for each $n\in\N$, there exists a \emph{symmetric} Chebyshev-type quadrature having exactly $n$ (not necessarily distinct) nodes and degree of accuracy $d_n(\sigma)$.
\end{theorem}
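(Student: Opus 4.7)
The plan is to exhibit, from any $n$-node Chebyshev-type quadrature $\mu$ for $\sigma$ achieving degree $k = d_n(\sigma)$, a symmetric $n$-node Chebyshev-type quadrature achieving the same degree. The strategy combines a reflection-symmetrization with Newton's identities and a reduction to the pushforward measure under $x \mapsto x^2$.

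First, I would observe that the reflection $\mu^* := \frac{1}{n}\sum_{i=1}^n \delta_{-x_i}$ is also a Chebyshev-type quadrature of degree $k$, since $\sigma$ symmetric implies $m_j := \int x^j d\sigma(x) = 0$ for odd $j$. By Newton's identities, the moment constraints $\sum_i x_i^j = n m_j$ for $j = 1, \ldots, k$ uniquely determine the first $k$ elementary symmetric polynomials $e_1, \ldots, e_k$ of the nodes, and the vanishing of $m_j$ for odd $j \le k$ forces $e_j = 0$ for every odd $j \le k$. Hence the polynomial $q(t) := \prod_{i=1}^n (t - x_i)$ has a prescribed ``half-symmetric'' top: in its top $k+1$ coefficients, every odd-index coefficient is forced to zero. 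The remaining $n - k$ bottom coefficients of $q$ are free, subject only to the constraint that all roots of $q$ be real.

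Next, I would reformulate the existence of a symmetric quadrature in terms of the pushforward measure $\tau$ on $[0, a^2]$ defined by $\int f \, d\tau := \int f(x^2)\, d\sigma$. A symmetric Chebyshev-type quadrature for $\sigma$ on $n = 2m$ nodes $\pm y_1, \ldots, \pm y_m$ of degree $\ge k$ corresponds precisely to an $m$-node Chebyshev-type quadrature for $\tau$ on the nodes $y_1^2, \ldots, y_m^2$ of degree $\ge \lfloor k/2 \rfloor$; for odd $n = 2m+1$ it corresponds to a weighted quadrature for $\tau$ with weight $2/(2m+1)$ on each $y_i^2$ and weight $1/(2m+1)$ at zero. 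Existence of the desired symmetric quadrature thus reduces to producing these reduced quadratures for $\tau$. One recovers them from the original $\mu$ by considering the pushforward $\mu_+$ of $\mu$ under $x \mapsto x^2$, which is a (non-uniformly weighted) $n$-node quadrature for $\tau$ matching the first $\lfloor k/2 \rfloor$ moments; canonical reduction results from the classical theory of the moment problem (see \cite{KN77}) then compress $\mu_+$ to a representation with the correct support size and weight structure, which when pulled back through $x \mapsto \pm \sqrt{x}$ yields the sought symmetric CTQ.

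The main obstacle is the final compression step, where the tension between the Chebyshev-type constraint (equal weights) and the desired symmetric structure is most acute. For even $n$ the reduction to an $m$-node Chebyshev-type quadrature for $\tau$ is clean once one verifies $m$ nodes suffice; for odd $n$ one must handle the exceptional weight at zero, which corresponds algebraically to forcing the remaining odd-index coefficients of $q$ below position $n-k$ to also vanish while preserving real-rootedness. This is precisely where the argument of F\"orster and Ostermeyer invests its technical effort, via a continuous deformation of the free bottom coefficients of $q$ that stays within the real-root locus, or equivalently a canonical representation result for the symmetric moment problem on $[0, a^2]$. For the purposes of Theorem~\ref{second_example_theorem} we take their theorem as a black box, noting only that it delivers precisely the symmetric quadrature that the subsequent application of Peherstorfer's result needs.
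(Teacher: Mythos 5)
This statement is quoted in the paper as an external result of F\"orster and Ostermeyer \cite{FO86}; the paper gives no proof of it, so taking it as a black box (as you do in your final sentence) is exactly what the paper does, and for the application to Theorem~\ref{second_example_theorem} that is all that is needed. The issue is that the sketch you wrap around the citation should not be mistaken for a proof: its decisive step is wrong as stated. Having pushed the optimal quadrature $\mu$ forward to $\mu_+$ on $[0,a^2]$ ($n$ atoms, each of weight $\frac{1}{n}$, matching the first $\lfloor k/2\rfloor$ moments of $\tau$), you claim that ``canonical reduction results from the classical theory of the moment problem'' compress $\mu_+$ to a representation ``with the correct support size \emph{and weight structure}.'' Canonical and principal representations in the Markov moment problem (\cite[III]{KN77}) minimize the number of nodes but exert no control whatsoever over the weights; producing a representation whose weights are all equal to $\frac{1}{m}$ (or $\frac{2}{2m+1}$ with an exceptional $\frac{1}{2m+1}$ at zero) is precisely the Chebyshev-type constraint, i.e.\ the entire difficulty of the subject. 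Nothing in the classical theory delivers it, and the alternative you offer --- a continuous deformation of the free bottom coefficients of $q$ ``that stays within the real-root locus'' --- asserts exactly the property (preservation of real-rootedness, and of confinement of the roots to the interval) that would need to be proved.

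The preparatory observations are fine: the reflection $\mu^*$ is again a degree-$k$ formula; Newton's identities do show that $p_j=0$ for odd $j\le k$ forces $e_j=0$ for odd $j\le k$, so the top $k+1$ coefficients of $q(t)=\prod_i(t-x_i)$ are invariant under $t\mapsto -t$; and the even/odd dichotomy via the pushforward under $x\mapsto x^2$ is the standard correspondence (modulo the small point that a symmetric formula with $2m+1$ nodes could carry any odd number of nodes at $0$, not necessarily one). But none of this closes the gap between an $n$-atom equal-weight representation and an $m$-atom equal-weight one. Since you ultimately defer to \cite{FO86} anyway, I would simply state the theorem with its citation, as the paper does, and drop the sketch or clearly label it as heuristic.
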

We will also use a special case of a Theorem of Peherstorfer \cite{P90}.
\begin{theorem}(\cite[special case of Theorem 3.1]{P90}\label{Peherstorfer_theorem}
There exists $C>0$ such that for each $n\in \N$, if a Chebyshev-type quadrature \eqref{Chebyshev_type_quadrature} for $\sigma_0$ has $n$ (not necessarily distinct) nodes $(x_i)_{i=1}^n$ satisfying $0\in(x_i)_{i=1}^n$, then its degree of accuracy $k$ satisfies $k\le C\ln(Cn)$.
\end{theorem}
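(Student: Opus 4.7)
Since $\sigma_0$ is symmetric about $0$, the substitution $y=x^2$ maps both arcs $[-1,-1/2]$ and $[1/2,1]$ onto the single interval $[1/4,1]$, while sending $0$ to $0$. This suggests building an extremal polynomial from the classical Chebyshev polynomial on the transformed interval. Pulling back $T_m$ via the affine map $y\mapsto(8y-5)/3$, which sends $[1/4,1]$ onto $[-1,1]$, yields a polynomial bounded by $1$ on $[1/4,1]$; at the point $y=0$, which maps to $-5/3$, it takes value $(-1)^m\cosh(m\ln 3)$, since $\cosh^{-1}(5/3)=\ln 3$. Defining
\begin{equation*}
P(x):=T_m\!\left(\frac{8x^2-5}{3}\right),
\end{equation*}
one obtains a polynomial of degree $2m$ in $x$ satisfying $|P(x)|\le 1$ on $\operatorname{supp}\sigma_0$ and $|P(0)|=\cosh(m\ln 3)\ge 3^m/2$.

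The key step is to test the quadrature identity against $P^2$. This polynomial has degree $4m$, is nonnegative everywhere, is bounded by $1$ on $\operatorname{supp}\sigma_0$ (so $\int P^2\,d\sigma_0\le 1$), and satisfies $P(0)^2\ge 9^m/4$. Provided the given Chebyshev-type quadrature has degree of accuracy $k\ge 4m$, we may apply \eqref{Chebyshev_type_quadrature} to $P^2$ and use the hypothesis that $0$ appears among the $(x_i)$ to conclude
\begin{equation*}
1\ge\int P^2\,d\sigma_0 = \frac{1}{n}\sum_{i=1}^n P(x_i)^2 \ge \frac{1}{n}P(0)^2 \ge \frac{9^m}{4n},
\end{equation*}
hence $9^m\le 4n$. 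Taking $m:=\lfloor k/4\rfloor$ and unwinding gives $9^{(k-3)/4}\le 4n$, which rearranges to $k\le C\ln(Cn)$ for a suitable absolute constant $C$. Small values of $k$ are absorbed into $C$.

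The main obstacle is the nonnegativity step. Without it, cancellations among the values $P(x_i)$ could keep $\frac{1}{n}\sum_i P(x_i)$ small even while $P(0)$ is huge, so the linear identity $\frac{1}{n}\sum_i P(x_i)=\int P\,d\sigma_0$ would yield no useful information. Squaring kills the cancellation but doubles the degree, which is why the bound compares $4m$ (rather than $2m$) to $k$. The rate $3^m$ of growth at $0$ is precisely the Chebyshev extremal rate for polynomials uniformly bounded on $[1/4,1]$ evaluated at $-5/3$, so one cannot reduce the logarithm to anything smaller by a different choice of $P$; this indicates the shape of the bound $k \le C\ln(Cn)$ is sharp in this argument.
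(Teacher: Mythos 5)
Your proof is correct, and it follows exactly the strategy the paper sketches in the remark after the theorem statement (the paper itself only cites Peherstorfer's Theorem 3.1 rather than proving it): take a Chebyshev-type polynomial bounded by $1$ on $\operatorname{supp}\sigma_0$ and huge at $0$, square it to force nonnegativity, and use exactness together with the weight $\frac{1}{n}$ at the node $0$. Your version has the added merit of being fully self-contained with explicit constants ($\cosh^{-1}(5/3)=\ln 3$, growth rate $3^m$), so it can stand in place of the citation.
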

We remark that the proof of Theorem~\ref{Peherstorfer_theorem} proceeds by taking a polynomial $T(x)$ of degree at most $k$ which satisfies $|T(x)|\le 1$ on the support of $\sigma_0$ and which is positive and grows very fast off the support of $\sigma_0$ (a variant of the Chebyshev polynomial may be used). Using the facts that the integral of $T$ with respect to $\sigma_0$ is at most $1$, that this integral must equal the integral of $T$ with respect to the given quadrature formula and that each node of the quadrature formula has weight $\frac{1}{n}$, one deduces that $n$ must be very large, to offset the contribution of the quadrature nodes outside the support of $\sigma_0$.

Now fix an odd integer $n\in \N$. By Theorem~\ref{Forster_Ostermeyer_theorem}, there exists a \emph{symmetric} Chebyshev-type quadrature \eqref{Chebyshev_type_quadrature} for $\sigma_0$ having exactly $n$ nodes and algebraic degree of accuracy $d_n$. Since $n$ is odd, the symmetry implies that $0$ is one of the nodes of this formula. This implies, by Theorem~\ref{Peherstorfer_theorem}, that $d_n(\sigma_0)\le C\ln(Cn)$ for some $C>0$, proving \eqref{odd_n_bound}.
\end{proof}

\begin{remark}
As a final remark for this section, we note that it is possible to have a sequence of absolutely continuous distributions $\sigma_k$ with $n_{\sigma_k}^0(k)$ rising as quickly as we want with $k$. However, the densities of these distributions will have very large essential supremums. For example, for $k=2m-1$, we can take a distribution with $m$ atoms and with the leftmost atom as small as we want. By Bernstein's theorem~\ref{Bernstein_estimate}, any Chebyshev-type quadrature for it of degree at least $k$ will have at least as many nodes as one over that atom (since the distribution and its Gaussian quadrature coincide in this case). Now, we can convolve this distribution with a smooth function which is very close to a delta measure to obtain an absolutely continuous distribution whose Gaussian quadrature is as close as we want to the atomic measure (in the weak topology), so that Bernstein's theorem implies the result.
\end{remark}
\end{subsection}

\begin{subsection}{Random Chebyshev-type cubatures on the cube}\label{random_cubatures_on_cube_section}


In this section, we prove Lemma~\ref{small_ball_to_cubature_lemma} and Theorem~\ref{positive_density_thm}.
\begin{proof}[Proof of Lemma~\ref{small_ball_to_cubature_lemma}]
Suppose that for some $n,k\ge 1$ and all $\eps>0$ we have $p_{n,k,\eps}(\sigma)>0$. If $k=1$, a Chebyshev-type cubature always exists for $\sigma$ (placing all nodes on the mean of $\sigma$). Assume $k\ge 2$ and fix a sequence $\eps_j\to0$. Since $p_{n,k,\eps_j}>0$ we can find a measure
\begin{equation*}
 \sigma_j:=\frac{1}{n}\sum_{i=1}^n\delta_{x_i^{(j)}}
\end{equation*}
such that $\Vert M_k(\sigma) - M_k(\sigma_j)\Vert_\infty\le \eps_j$. These measures must have a converging subsequence as $j\to\infty$ (in the sense that the location of the atoms converges) to some $\sigma':=\frac{1}{n}\sum_{i=1}^n\delta_{x_i}$ since if any of the atoms goes to infinity we necessarily have $\Vert M_k(\sigma_j)\Vert_\infty\to\infty$ since $k\ge 2$ and each atom carries a fixed weight $\frac{1}{n}$. $\sigma'$ is the required cubature.
\end{proof}
We proceed to prove Theorem~\ref{positive_density_thm}. Recalling the statement of the theorem, we first observe that since the $M_i$ are IID vectors in $\R^{\polydim(k,d)}$,
the central limit theorem gives that $\bar{S}_n$ converges weakly to a $N(0,\Sigma)$ RV for some matrix $\Sigma$. To prove the proposition we would like to show that $\Sigma$ is positive definite and that a local limit theorem also holds. This will imply that for large enough $n$, the density of $\bar{S}_n$ exists and is uniformly close to that of $N(0,\Sigma)$, whence it is uniformly positive in a neighborhood of the origin.

For a random variable $X\in\R^m$ we write $\hat{X}:\R^m\to\C$ for the characteristic function $f(\lambda):=\E e^{i\lambda\cdot X}$. We use the following local limit theorem from \cite{BR76}.
\begin{theorem}(\cite[Th. 19.1, Ch. 4]{BR76})
Let $(X_n)_{n\ge 1}$ be a sequence of IID random vectors in $\R^m$ with $\E X_1=0$ and positive definite $\Sigma:=\Cov(X_1)$. Let $Q_n:=\frac{1}{\sqrt{n}}(X_1+\cdots X_n)$, then the following are equivalent:
\begin{enumerate}
\item $\hat{Q}_1\in L^p(\R^m)$ for some $1\le p<\infty$.
\item For every sufficiently large $n$, $Q_n$ has a density $q_n$ and
\begin{equation*}
\lim_{n\to\infty} \sup_{x\in\R^m} |q_n(x)-\phi_{0,\Sigma}(x)|=0
\end{equation*}
where $\phi_{0,\Sigma}$ is the density of a $N(0,\Sigma)$ random vector.
\end{enumerate}
\end{theorem}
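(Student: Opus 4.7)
The overall plan is to verify the two hypotheses of the Bhattacharya--Rao local limit theorem quoted immediately after the statement, applied to $X_i := M_i - \E M_1$, so that $Q_n = \bar S_n$ in its notation. Once this is done, the conclusion of that theorem provides a density $f_n$ for all sufficiently large $n$ and gives $\sup_{x} |f_n(x) - \phi_{0,\Sigma}(x)| \to 0$, where $\phi_{0,\Sigma}$ is the density of $N(0,\Sigma)$ with $\Sigma := \Cov(M_1)$. Since $\phi_{0,\Sigma}(0) > 0$ and $\phi_{0,\Sigma}$ is continuous, one gets $t>0$ and $a>0$ with $\phi_{0,\Sigma}(x)\ge 2a$ for $|x|\le t$; the uniform convergence then yields $N_0$ beyond which $f_n \ge a$ on the ball of radius $t$.

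Checking that $\Sigma$ is positive definite is routine: for any nonzero $\lambda \in \R^{\polydim(k,d)}$, the polynomial $P_\lambda(x) := \sum_{0 < |\alpha| \le k} \lambda_\alpha x^\alpha$ is not identically zero on $[-1,1]^d$ (the monomials $\{x^\alpha\}_{0<|\alpha|\le k}$ are linearly independent as functions), and does not include a constant term, so $P_\lambda$ is a non-constant polynomial on $[-1,1]^d$. Hence
$$\lambda^\top \Sigma \lambda \;=\; \var P_\lambda(X_1) \;>\; 0.$$

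The crux is verifying that $\hat X_1 \in L^p(\R^{\polydim(k,d)})$ for some finite $p$. Since $|\hat X_1(\lambda)| = |I(\lambda)|$, where
$$I(\lambda) \;:=\; \frac{1}{2^d} \int_{[-1,1]^d} e^{i P_\lambda(x)}\, dx,$$
the task reduces to a uniform decay estimate $|I(\lambda)| \le C|\lambda|^{-\eta}$ as $|\lambda|\to\infty$ with $\eta$ large enough. Writing $\lambda = R\omega$ with $\omega$ on the unit sphere of $\R^{\polydim(k,d)}$, we have $P_\lambda = R P_\omega$, and need $|I(R\omega)| \le C(k,d)\, R^{-\eta}$ uniformly in $\omega$. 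For any such $\omega$, some coefficient $\omega_{\alpha^*}$ has modulus at least $\polydim(k,d)^{-1/2}$; choosing a coordinate $i_0$ with $\alpha^*_{i_0}\ge 1$ and integrating in $x_{i_0}$ first via the univariate Van der Corput / stationary-phase estimate for polynomial phases of degree at most $k$ (as in Stein's book, hinted at in the acknowledgments), one expects a bound of the form $|I(R\omega)| \le C(k,d) R^{-1/k}$. With any $p$ satisfying $p/k > \polydim(k,d)$, this together with the trivial bound $|I|\le 1$ near the origin gives $|I|^p \in L^1(\R^{\polydim(k,d)})$, as required.

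The main obstacle is making the Van der Corput decay estimate genuinely uniform in the direction $\omega$. The subtlety is that after fixing the favored coordinate $x_{i_0}$, the resulting univariate polynomial in $x_{i_0}$ has leading coefficient which is itself a polynomial in the remaining variables; for some $\omega$ and some values of those variables this coefficient can be small, potentially weakening the pointwise one-dimensional decay. Resolving this requires either (a) integrating by parts / subdividing the cube so that the effective phase is bounded below on each piece after uniformly small exceptional sets are removed, or (b) decomposing the unit sphere in $\R^{\polydim(k,d)}$ into finitely many regions on each of which a different choice of privileged coordinate and a different iteration of Van der Corput yields uniform decay. Once this uniform oscillatory integral estimate is secured, the rest of the argument is a direct application of Bhattacharya--Rao.
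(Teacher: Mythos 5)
Your proposal does not prove the statement in question. The statement is the Bhattacharya--Rao local limit theorem itself: for an \emph{arbitrary} IID sequence $(X_n)$ in $\R^m$ with mean zero and positive definite covariance $\Sigma$, the equivalence of (1) $\hat Q_1\in L^p(\R^m)$ for some finite $p$ and (2) the existence, for all large $n$, of densities $q_n$ of $Q_n$ converging uniformly to $\phi_{0,\Sigma}$. Your argument addresses neither direction of this equivalence. Instead it takes the equivalence as given and verifies hypothesis (1) for the particular vectors $M_i=P_k^d(X_i)$ arising from uniform points in the cube --- that is the content of the paper's theorem on the density of $\bar S_n$ (Theorem on positive density), not of the quoted theorem. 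Read as a proof of the quoted statement it is circular: it invokes the implication $(1)\Rightarrow(2)$ that it is supposed to establish, and the converse $(2)\Rightarrow(1)$ is never mentioned. Note that the paper itself gives no proof of this statement; it is cited verbatim from Bhattacharya--Rao, so there is no internal proof to compare against. An actual proof would proceed by Fourier inversion of $\hat Q_n(\lambda)=\hat Q_1(\lambda/\sqrt n)^n$, splitting frequency space into a central region where an Edgeworth/Berry--Esseen-type comparison with the Gaussian characteristic function applies, and a tail region controlled by the $L^p$ hypothesis together with the fact that $|\hat Q_1|<1$ away from the origin; none of this appears in your write-up.

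For what it is worth, as a sketch of the paper's \emph{application} of this theorem your outline follows the same strategy (your direct verification that $\lambda^\top\Sigma\lambda=\var P_\lambda(X_1)>0$ is in fact cleaner than the paper's, which deduces positive definiteness from the $L^p$ condition). But the step you yourself flag as ``the main obstacle'' --- uniformity of the Van der Corput decay over directions $\omega$ --- is exactly the nontrivial point, and you leave it as an either/or of two unexecuted strategies. The paper's resolution is not to privilege a coordinate axis: for each direction $\eta$ it produces some $1\le j\le k$ and a unit vector $u\in\S^{d-1}$, \emph{depending on} $\eta$, such that $D_u^j(\eta\cdot P_k^d)$ is a nonzero constant, and a compactness argument over $\eta\in\S^{\polydim(k,d)-1}$ yields a uniform lower bound $c_k>0$ for $\min_{x\in[-1,1]^d}|D_u^j(\eta\cdot P_k^d)(x)|$. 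Slicing the cube along lines parallel to $u$ and applying Van der Corput on each segment (with a separate integration-by-parts estimate for $j=1$, where monotonicity of the derivative is unavailable) gives $|\hat M_1(r\eta)|\le \hat C_k r^{-1/k}$ uniformly in $\eta$. Choosing $u$ adapted to $\eta$ avoids entirely the degeneration of the leading coefficient that your coordinate-based approach would have to confront.
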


In our case we take $X_i:=M_i-\E M_1$ and we will show that
\begin{equation} \label{L_p_estimate}
\hat{M}_1\in L^p(\R^{\polydim(k,d)}) \qquad \text{for some $1\le p<\infty$.}
\end{equation}
Note that to use the above theorem it may seem necessary to separately show that $\Sigma:=\Cov(M_1)$ is positive definite, but this also follows from \eqref{L_p_estimate} since if $\Cov(M_1)$ were singular then $X_1$ would be supported in a linear subspace and \eqref{L_p_estimate} would not hold, since in that case $\hat{X}_1(\mu+\lambda)$ would equal $\hat{X}_1(\mu)$ for every $\lambda$ orthogonal to that linear subspace.

Hence Theorem \ref{positive_density_thm} will follow by verifying \eqref{L_p_estimate}. Such estimates are standard in the theory of oscillatory integrals but since we could not find this exact result, we prove it using standard methods from the book \cite{S93} by Stein. Following that book, we use the next estimate of Van der Corput to prove what we need.
\begin{proposition}\label{VdC_estimate}(\cite[Prop. 2, Ch. VIII]{S93})
Suppose $\phi:(a,b)\to\R$ is smooth and satisfies $|\phi^{(j)}(\rho)|\ge 1$ for $\rho\in(a,b)$. Then
\begin{equation*}
\left|\int_a^b e^{i\lambda\phi(\rho)}d\rho\right|\le C_j\lambda^{-1/j}
\end{equation*}
when $j\ge 2$ or when $j=1$ and $\phi'$ is monotonic. The bound $C_j$ is independent of $\phi$, $\lambda$, $a$ and $b$.
\end{proposition}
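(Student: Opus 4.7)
The plan is to prove the estimate by induction on $j$, with the base case $j=1$ handled by integration by parts and the inductive step handled by splitting $(a,b)$ into a ``bad'' subset on which $|\phi^{(j-1)}|$ is small (handled trivially) and a ``good'' complement on which a rescaling reduces matters to the inductive hypothesis at level $j-1$.

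For the base case $j=1$, I would write
\begin{equation*}
\int_a^b e^{i\lambda\phi(\rho)}d\rho = \frac{1}{i\lambda}\int_a^b \frac{1}{\phi'(\rho)}\frac{d}{d\rho}e^{i\lambda\phi(\rho)}d\rho
\end{equation*}
and integrate by parts. Since $|\phi'|\ge 1$ on $(a,b)$, each of the two boundary terms has absolute value at most $\lambda^{-1}$. The remaining integral equals $\frac{-1}{i\lambda}\int_a^b e^{i\lambda\phi(\rho)}\frac{d}{d\rho}\frac{1}{\phi'(\rho)}d\rho$, whose absolute value is bounded by $\lambda^{-1}\int_a^b \bigl|(1/\phi')'\bigr|d\rho$. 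Because $\phi'$ is monotonic, so is $1/\phi'$, and this integral telescopes to $|1/\phi'(b)-1/\phi'(a)|\le 2$. Altogether $C_1=4$ suffices.

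For the inductive step, assume the estimate at level $j-1$ with constant $C_{j-1}$, and let $\phi$ satisfy $|\phi^{(j)}|\ge 1$ on $(a,b)$. Then $\phi^{(j)}$ has constant sign, so $\phi^{(j-1)}$ is strictly monotonic; for any $\delta>0$ the sub-level set $E_\delta:=\{\rho\in(a,b):|\phi^{(j-1)}(\rho)|<\delta\}$ is therefore a single subinterval, and the mean value theorem combined with $|\phi^{(j)}|\ge 1$ forces its length to be at most $2\delta$. Its complement in $(a,b)$ is a union of at most two subintervals $I$ on each of which $|\phi^{(j-1)}|\ge\delta$, hence the rescaled function $\phi/\delta$ satisfies $|(\phi/\delta)^{(j-1)}|\ge 1$, and the inductive hypothesis applied with frequency parameter $\lambda\delta$ yields $|\int_I e^{i\lambda\phi}d\rho|\le C_{j-1}(\lambda\delta)^{-1/(j-1)}$. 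Estimating the contribution of $E_\delta$ trivially by its length gives
\begin{equation*}
\left|\int_a^b e^{i\lambda\phi(\rho)}d\rho\right|\le 2\delta + 2C_{j-1}(\lambda\delta)^{-1/(j-1)}.
\end{equation*}
Choosing $\delta:=\lambda^{-1/j}$ balances the two terms and gives the claimed $\lambda^{-1/j}$ bound with $C_j:=2+2C_{j-1}$, independent of $\phi$, $a$, $b$.

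The only delicate point is checking, when the inductive hypothesis is applied at level $j-1=1$, that the required monotonicity of $(\phi/\delta)'$ on each good subinterval holds; this is automatic, since $|\phi''|\ge 1$ on $(a,b)$ forces $\phi'$ to be monotonic on every subinterval. For $j-1\ge 2$ no monotonicity hypothesis is needed in the inductive statement, so the recursion proceeds cleanly. A minor bookkeeping detail is that $E_\delta$ may be empty or touch the endpoints of $(a,b)$, and its complement consists of at most two intervals; this is the reason for the factor of $2$ multiplying $C_{j-1}$ in the recursion for $C_j$.
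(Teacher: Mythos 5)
Your proof is correct. Note that the paper does not prove this proposition at all --- it is quoted verbatim from Stein's book \cite[Prop.~2, Ch.~VIII]{S93} --- and your argument (integration by parts for $j=1$ using monotonicity of $1/\phi'$, then induction on $j$ by excising the interval where $|\phi^{(j-1)}|<\delta$, rescaling, and optimizing $\delta=\lambda^{-1/j}$) is precisely the standard Van der Corput proof given there, including the check that monotonicity of $\phi'$ is automatic when descending to the level $j-1=1$.
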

For the case $j=1$ we will not be able to ensure monotonicity, so we will use instead:
\begin{lemma}\label{oscil_estimate_k_one}
Suppose $\phi:(a,b)\to\R$ is smooth with $|\phi'(\rho)|\ge 1$ for $\rho\in(a,b)$. then
\begin{equation}
\left|\int_a^b e^{i\lambda\phi(\rho)}d\rho\right|\le \frac{2}{\lambda}+\frac{b-a}{\lambda}\max_{\rho\in(a,b)}|\phi''(\rho)|.
\end{equation}
\end{lemma}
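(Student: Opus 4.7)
The plan is to use the standard integration-by-parts trick for oscillatory integrals, exploiting the lower bound $|\phi'(\rho)|\ge 1$ to divide by $\phi'$ safely. Specifically, I would write
\[
e^{i\lambda\phi(\rho)} = \frac{1}{i\lambda\phi'(\rho)}\,\frac{d}{d\rho}e^{i\lambda\phi(\rho)},
\]
which is well-defined on $(a,b)$ since $|\phi'|\ge 1$ there, and then integrate by parts.

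Performing the integration by parts yields
\[
\int_a^b e^{i\lambda\phi(\rho)}\,d\rho = \left[\frac{e^{i\lambda\phi(\rho)}}{i\lambda\phi'(\rho)}\right]_a^b + \int_a^b e^{i\lambda\phi(\rho)}\,\frac{\phi''(\rho)}{i\lambda(\phi'(\rho))^2}\,d\rho.
\]
The boundary term is bounded in absolute value by $\frac{2}{\lambda}$, using $|\phi'(\rho)|\ge 1$ at both endpoints (interpreting as limits if needed) and $|e^{i\lambda\phi}|=1$. For the remaining integral, the integrand is bounded pointwise by $\frac{|\phi''(\rho)|}{\lambda(\phi'(\rho))^2}\le \frac{\max_{\rho\in(a,b)}|\phi''(\rho)|}{\lambda}$, so integrating over $(a,b)$ gives a contribution of at most $\frac{(b-a)\max_{\rho\in(a,b)}|\phi''(\rho)|}{\lambda}$. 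Adding the two contributions yields the claimed inequality.

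There is no real obstacle here; the only minor care needed is in handling the boundary term if $\phi$ is only smooth on the open interval $(a,b)$, which is addressed by passing to a limit from $(a+\eps,b-\eps)$ and letting $\eps\to 0$, using that both the boundary values $\left|\frac{e^{i\lambda\phi(\rho)}}{i\lambda\phi'(\rho)}\right|\le \frac{1}{\lambda}$ are uniformly controlled by the hypothesis $|\phi'|\ge 1$. This is the natural substitute for Proposition~\ref{VdC_estimate} in the $j=1$ case when monotonicity of $\phi'$ is unavailable, at the cost of picking up the $\max|\phi''|$ factor.
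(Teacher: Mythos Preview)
Your proof is correct and essentially identical to the paper's own argument: integration by parts via $e^{i\lambda\phi}=\frac{1}{i\lambda\phi'}\frac{d}{d\rho}e^{i\lambda\phi}$, bounding the boundary term by $\frac{2}{\lambda}$ using $|\phi'|\ge 1$, and bounding the remaining integral by $\frac{b-a}{\lambda}\max|\phi''|$ using $|\phi'|^2\ge 1$.
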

\begin{proof}
The proof is a slight variation on the proof of the previous proposition for the case $k=1$, as it appears in \cite{S93}. Using integration by parts,
\begin{equation*}
\int_a^b e^{i\lambda\phi(\rho)}d\rho = \int_a^b e^{i\lambda\phi(\rho)}\frac{i\lambda\phi'(\rho)}{i\lambda\phi'(\rho)}d\rho = \frac{e^{i\lambda\phi(\rho)}}{i\lambda\phi'(\rho)}|_a^b-\int_a^b e^{i\lambda\phi(\rho)}\frac{d}{d\rho}\left(\frac{1}{i\lambda\phi'(\rho)}\right)d\rho.
\end{equation*}
The boundary terms are majorized by $\frac{2}{\lambda}$ and the second term satisfies
\begin{equation*}
\left|\int_a^b e^{i\lambda\phi(\rho)}\frac{d}{d\rho}\left(\frac{1}{i\lambda\phi'(\rho)}\right)d\rho\right| \le \frac{1}{\lambda}\int_a^b \frac{|\phi''(\rho)|}{|\phi'(\rho)|^2}d\rho\le \frac{b-a}{\lambda}\max_{\rho\in(a,b)}|\phi''(\rho)|.\qedhere
\end{equation*}
\end{proof}

For $u\in \S^{d-1}$, let $D_u$ denote the directional derivative operator in the direction $u$, and let $D_u^j$ be its $j$-th power; i.e.,
\begin{equation*}
D_u^j(f)(x)=\frac{d^j}{d\rho^j}f(x+\rho u)|_{\rho=0}.
\end{equation*}
We continue with two simple technical lemmas:
\begin{lemma}
Let $Q:\R^d\to\R$ be a non-zero polynomial of degree $j$, then there exists $u\in\S^{d-1}$ such that $D_u^j(Q)$ is a non-zero constant function.
\end{lemma}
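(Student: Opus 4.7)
The plan is to reduce to the top-degree homogeneous part of $Q$ and then exploit homogeneity.

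First I would decompose $Q = Q_j + R$, where $Q_j(x) = \sum_{|\alpha|=j} c_\alpha x^\alpha$ is the homogeneous component of degree $j$ and $R$ has total degree at most $j-1$. Since differentiation lowers polynomial degree by one, applying $D_u^j$ to any monomial of degree at most $j-1$ yields zero, so $D_u^j(R) \equiv 0$, and therefore $D_u^j(Q) = D_u^j(Q_j)$ for every $u \in \S^{d-1}$. Because $Q$ has degree exactly $j$, the polynomial $Q_j$ is not identically zero.

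Next I would compute $D_u^j(Q_j)$ using the one-variable expansion along the line $\rho \mapsto x + \rho u$. By homogeneity of $Q_j$, one has
\begin{equation*}
Q_j(\rho u) = \rho^j Q_j(u),
\end{equation*}
and on the other hand Taylor's formula at $x=0$ gives $Q_j(\rho u) = \sum_{i=0}^{j} \frac{\rho^i}{i!} D_u^i(Q_j)(0)$. Matching coefficients of $\rho^j$ shows $D_u^j(Q_j)(0) = j!\, Q_j(u)$. Combined with the observation that $D_u^j(Q_j)$ is a polynomial of degree $0$ (hence constant), this yields $D_u^j(Q_j)(x) = j!\, Q_j(u)$ for all $x \in \R^d$.

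Finally, since $Q_j$ is a non-zero polynomial, there is some $x_0 \in \R^d$ with $Q_j(x_0) \neq 0$; by homogeneity, $u := x_0/|x_0| \in \S^{d-1}$ satisfies $Q_j(u) = |x_0|^{-j} Q_j(x_0) \neq 0$, and then $D_u^j(Q)(\cdot) \equiv j!\, Q_j(u)$ is a non-zero constant, as required. There is no real obstacle here: the argument is elementary multivariable calculus, the only mild care being the separation of the top-degree part (killing lower-order terms) and the application of homogeneity to identify the constant.
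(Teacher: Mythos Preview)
Your proof is correct and follows essentially the same approach as the paper: both isolate the top-degree homogeneous part $Q_j$, observe that $D_u^j$ kills the lower-order remainder, and then choose $u\in\S^{d-1}$ with $Q_j(u)\neq 0$. Your version is slightly more direct---you normalize a point $x_0$ with $Q_j(x_0)\neq 0$ and compute the constant explicitly as $j!\,Q_j(u)$---whereas the paper phrases the existence of such a $u$ via the moment map $\tilde P_j^d$ and argues that the leading $\rho^j$-coefficient of $Q(x+\rho u)$ is independent of $x$; but these are the same idea in different clothing.
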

\begin{proof}
Denote $m:=\binom{j+d-1}{d-1}$ and let $\tilde{P}_j^d:\R^d\to\R^m$ be defined by $\tilde{P}_j^d(x):=(x^\alpha)_{|\alpha|=j}$,
where $\alpha$ is a multi-index. We first note that the image of $\S^{d-1}$ under $\tilde{P}_j^d$ is not contained in any proper linear subspace of $\R^m$. This follows since otherwise there would exist $\eta\in \S^{m-1}$ such that $\eta\cdot \tilde{P}_j^d(u)=0$ for all $u\in\S^{d-1}$ contradicting the fact that $\eta\cdot\tilde{P}_j^d$ is a non-zero homogeneous polynomial.


Now decompose $Q$ as $Q=Q_1+Q_2$ where $Q_1$ is a non-zero homogeneous polynomial of degree $j$ and $Q_2$ is of degree at most $j-1$. Write $Q_1(x)=\sum_{|\alpha|=j} a_\alpha x^{\alpha}$. It follows from the above that we may choose $u\in \S^{d-1}$ such that $\tilde{P}_j^d(u)$ is not orthogonal to $(a_\alpha)_{|\alpha|=j}$. Hence taking $\rho\in\R$, we see that $Q(\rho u)$ is a non-zero polynomial of degree $j$ in $\rho$, from whence it follows that for every $x\in\R^d$, $Q(x+\rho u)$ is a polynomial of degree $j$ in $\rho$ with the same leading coefficient. Finally, we deduce that $\frac{d^j}{d\rho^j}Q(x+\rho u)|_{\rho=0}$ is a non-zero constant function as required.
\end{proof}
\begin{lemma} \label{polynomial_derivative_bound}
There is $c_{k}>0$ such that for every direction $\eta\in \S^{\polydim(k,d)-1}$ the function $\eta\cdot P_k^d$ satisfies that there exists $1\le j\le k$ and $u\in \S^{d-1}$ with
\begin{equation*}
\min_{x\in[-1,1]^d} |D^j_{u}(\eta\cdot P_k^d)(x)|\ge c_k.
\end{equation*}
\end{lemma}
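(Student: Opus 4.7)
The plan is to reduce the statement to a compactness argument: for each fixed $\eta$, the previous lemma will hand us \emph{some} direction $u$ and some order $j\le k$ making $D_u^j(\eta\cdot P_k^d)$ a nonzero constant; then continuity of the relevant quantity in $\eta$ plus compactness of the unit sphere will upgrade this to a uniform bound $c_k>0$.

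More concretely, for each integer $1\le j\le k$ define
\[
f_j(\eta):=\max_{u\in\S^{d-1}}\min_{x\in[-1,1]^d}\bigl|D_u^j(\eta\cdot P_k^d)(x)\bigr|,
\qquad f(\eta):=\max_{1\le j\le k}f_j(\eta).
\]
Since $P_k^d$ is a fixed polynomial map, the quantity $D_u^j(\eta\cdot P_k^d)(x)$ is jointly polynomial in $(x,u,\eta)$. Compactness of $[-1,1]^d$ and $\S^{d-1}$ together with the standard fact that a min (resp.\ max) of a jointly continuous function over a compact set is continuous in the remaining parameters then give that each $f_j$, and hence $f$, is continuous on $\S^{\polydim(k,d)-1}$.

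Next I would show $f(\eta)>0$ for every unit $\eta$. Fix such an $\eta$ and set $Q:=\eta\cdot P_k^d$. Because $\eta\ne 0$ and the monomials $\{x^\alpha:0<|\alpha|\le k\}$ are linearly independent, $Q$ is a nonzero polynomial of some degree $j_0$ with $1\le j_0\le k$. The previous lemma, applied to $Q$ and its degree $j_0$, supplies a direction $u\in\S^{d-1}$ such that $D_u^{j_0}(Q)$ is a nonzero constant function on $\R^d$, say equal to $c\ne 0$. Consequently $\min_{x\in[-1,1]^d}|D_u^{j_0}(Q)(x)|=|c|>0$, so $f_{j_0}(\eta)\ge|c|>0$ and hence $f(\eta)>0$. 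Note that we did not need the degree of $Q$ to depend continuously on $\eta$ (it does not); taking the max over all $j\in\{1,\ldots,k\}$ absorbs the degree jumps.

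Finally, since $\S^{\polydim(k,d)-1}$ is compact and $f$ is continuous and strictly positive on it, $f$ attains its infimum, and we may set
\[
c_k:=\min_{\eta\in\S^{\polydim(k,d)-1}}f(\eta)>0.
\]
For each $\eta$ there is then an index $j\le k$ with $f_j(\eta)\ge c_k$, and compactness of $\S^{d-1}$ lets us select a maximizer $u\in\S^{d-1}$ achieving $\min_{x\in[-1,1]^d}|D_u^j(\eta\cdot P_k^d)(x)|=f_j(\eta)\ge c_k$, as required. The only nontrivial point is the continuity/compactness bookkeeping; the previous lemma does the real algebraic work, so no further obstacle is expected.
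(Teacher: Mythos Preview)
Your proof is correct and follows essentially the same approach as the paper: for each fixed $\eta$ invoke the previous lemma to obtain a direction $u$ and order $j$ making $D_u^j(\eta\cdot P_k^d)$ a nonzero constant, then use continuity of $\max_{u,j}\min_x |D_u^j(\eta\cdot P_k^d)(x)|$ in $\eta$ and compactness of the unit sphere to extract a uniform positive lower bound $c_k$. Your write-up is in fact more careful than the paper's (you spell out why $f_j$ is continuous and why $Q\not\equiv 0$), but the argument is the same.
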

\begin{proof}
Fix $\eta\in \S^{\polydim(k,d)-1}$ and denote $P(x):=\eta\cdot P^d_k(x)$. Since $P(x)$ is a non-zero polynomial of some degree $j\le k$, by the previous lemma, there exists a $u\in \S^{d-1}$ such that $D^j_{u} P$ is a non-zero constant. Hence, in particular, $\min_{x\in[-1,1]^d} |D^j_{u}(P)(x)|>0$. The lemma follows since
\begin{equation*}
\max_{u\in \S^{d-1}}\max_{1\le j\le k}\min_{x\in[-1,1]^d} |D^j_{u}(\eta\cdot P_k^d)(x)|
\end{equation*}
is a continuous function of $\eta$ and $\S^{\polydim(k,d)-1}$ is a compact set.
\end{proof}

\begin{proof}[Proof of Theorem \ref{positive_density_thm}]
Denote $f(\lambda):=\hat{M}_1 (\lambda)$. Fix a direction $\eta\in \S^{\polydim(k,d)-1}$, let $r>0$ and consider
\begin{equation} \label{f_eta_dir_eq}
f(r\eta) = \E e^{i r\eta\cdot M_1} = \int_{[-1,1]^d} e^{ir\eta\cdot P_k^d(x)}dx.
\end{equation}
Our goal is to prove an estimate of the form
\begin{equation}\label{directional_Fourier_estimate}
|f(r\eta)|\le \hat{C}_kr^{-s}
\end{equation}
for some $\hat{C}_k$ and $s>0$ independent of $\eta$ and $r$. Such an estimate will imply \eqref{L_p_estimate} for $p>\frac{\polydim(k,d)}{s}$.

Applying Lemma \ref{polynomial_derivative_bound}, we obtain a number $1\le j\le k$ and a direction $u\in \S^{d-1}$ such that
\begin{equation}\label{poly_derivative_lower_bound}
\min_{x\in[-1,1]^d} |D^j_{u} (\eta\cdot P^d_k)(x)|\ge c_{k},
\end{equation}
where $c_k>0$ is independent of $\eta$. In addition, we may define
\begin{equation}\label{poly_derivative_upper_bound}
\bar{C}_k:=\max_{\eta\in \S^{\polydim(k,d)-1}}\max_{x\in[-1,1]^d} \max_{u\in \S^{d-1}} |D^2_u(\eta\cdot P^d_k)(x)|<\infty.
\end{equation}
We now decompose our space to the line $H=\{\rho u\}_{\rho\in\R}$ and $H^\perp$. We say that $y\in H^{\perp}$ is \emph{contributing} if there exists $\rho\in\R$ such that $\rho u+y\in[-1,1]^d$. For contributing $y$'s define
\begin{equation*}
\begin{split}
a_y &= \min\{\rho\in\R\ |\ \rho u+y\in[-1,1]^d\},\\
b_y &= \max\{\rho\in\R\ |\ \rho u+y\in[-1,1]^d\}.
\end{split}
\end{equation*}
For non-contributing $y$'s set $a_y=b_y=0$. Note that by a simple $l_2$ estimate, if $|y|>\sqrt{d}$, then $y$ is non-contributing. We note that we may estimate the integral \eqref{f_eta_dir_eq} we are after as
\begin{equation*}
|f(r\eta)| = \left|\int_{H^{\perp}}\int_{a_y}^{b_y} e^{ir\eta\cdot P_k^d(\rho u+y)}d\rho dy\right|\le \int_{H^\perp\cap[-\sqrt{d},\sqrt{d}]^d}\left|\int_{a_y}^{b_y} e^{ir\eta\cdot P_k^d(\rho u+y)}d\rho\right| dy.
\end{equation*}
Using the inequalities \eqref{poly_derivative_lower_bound} and \eqref{poly_derivative_upper_bound} we may apply Proposition \ref{VdC_estimate} and Lemma \ref{oscil_estimate_k_one} to $\phi_y(\rho):=(\eta \cdot P_k^d)(\rho u +y)$ to obtain
\begin{equation}
\left|\int_{a_y}^{b_y} e^{ir\phi_y(\rho)}d\rho\right|\le \tilde{C}_{k}r^{-1/j} \le \tilde{C}_{k} r^{-1/k},
\end{equation}
for $\tilde{C}_k$ independent of $\eta$, $r$ and $y$. Plugging this estimate in the previous integral we finally obtain
\begin{equation*}
|f(r\eta)|\le \hat{C}_{k} r^{-1/k}
\end{equation*}
as required.
\end{proof}
\end{subsection}

\begin{subsection}{Local cubature formulas}
In this section we prove Theorems~\ref{special_sphere_cubature} and \ref{special_cubature_on_cyl_thm}. The idea behind the proof of Theorem~\ref{special_sphere_cubature} is to present the sphere in spherical coordinates. Partition the spherical coordinate space into suitable boxes and then use the fact that the measure on each box is a product measure to construct Chebyshev-type quadratures for them using our one-dimensional quadrature results. The same idea with a few variations was used in Wagner \cite{W92}. To prove Theorem~\ref{special_cubature_on_cyl_thm}, we use the fact that the measure on the cylinder is a product of the measures on the $x$-axis and the measure on the sphere. We then partition the $x$-axis to small intervals and construct a product Chebyshev-type cubature on each interval using our construction for the sphere and our one-dimensional quadrature results.

\begin{subsubsection}{Sphere Construction}
We begin the proof by introducing spherical coordinates. Let $\ang_{d}:=(\phi,\theta_1,\ldots, \theta_{d-1})\in\R^{d}$ and $\Omega_{d}:=\{0\le \phi\le 2\pi, 0\le \theta_i\le \pi\text{ for $1\le i\le d-1$}\}$. Then define $T:\Omega_{d}\to\R^{d+1}$ (formally $T_{d}$) by
\begin{equation*}
\begin{split}
T(\ang_{d})_1 &:= \sin(\phi)\prod_{i=1}^{d-1}\sin(\theta_i),\\
T(\ang_{d})_2 &:= \cos(\phi)\prod_{i=1}^{d-1}\sin(\theta_i),\\
T(\ang_{d})_j &:= \cos(\theta_{j-2})\prod_{i=j-1}^{d-1}\sin(\theta_i) \quad \text{for $3\le j\le d+1$}.
\end{split}
\end{equation*}
This is a continuous and onto mapping of $\Omega_d$ to $\S^{d}$. Further endowing $\Omega_{d}$ with the measure
\begin{equation*}
d\mu_{d}(\ang_{d}) := d\phi\prod_{i=1}^{d-1} \sin^i(\theta_i)d\theta_i,
\end{equation*}
the map becomes measure preserving ($\S^d$ is endowed with the surface area measure $\sigma_d$). We will embed $\Omega_{d}$ into $\Omega_{d+1}$ and write (with slight abuse of notation) $\ang_{d+1} = (\ang_{d}, \theta_{d})$. Note also that $d\mu_{d+1}(\ang_{d+1}) = \sin^{d}(\theta_{d}) d\theta_{d}d\mu_{d}(\ang_{d})$. Similarly, we will embed $\S^d = T(\Omega_d)$ into $\S^{d+1}=T(\Omega^{d+1})$ by $T(\ang_{d+1}) = (\sin(\theta_{d})T(\ang_{d}), \cos(\theta_{d}))$.

We now construct the partition we shall use in Theorem~\ref{special_sphere_cubature}. In the spherical coordinates space $\Omega_d$, the sets of the partition will be taken as boxes, that is, Cartesian products of intervals.
\begin{proposition}\label{sphere_partition_to_rectangles}
For each $d\ge 1$ and $0<\tau<1$ there exist $C(d),c(d)>0$ (independent of $\tau$), $K=K(\tau,d)>0$, and a partition of $\Omega_{d}$ (up to measure $0$) into boxes $D_1,\ldots, D_K$ with side lengths smaller than $1$, $\diam(T(D_i))\le C(d)\tau$ and $\mu_{d}(D_i)\ge c(d)\tau^{d}$ for all $i$.
\end{proposition}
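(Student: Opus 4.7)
The plan is to induct on the dimension $d$, exploiting the recursive structure of spherical coordinates. The key identities are $d\mu_d(\ang_d) = \sin^{d-1}(\theta_{d-1})\,d\theta_{d-1}\,d\mu_{d-1}(\ang_{d-1})$ and $T_d(\ang_d) = (\sin(\theta_{d-1})\,T_{d-1}(\ang_{d-1}),\cos(\theta_{d-1}))$, which allow one to reduce a partition of $\Omega_d$ to a partition of $\Omega_{d-1}$ rescaled by a factor depending on $\theta_{d-1}$. The base case $d=1$ is immediate: partition $[0,2\pi]$ into $\lceil 2\pi/\tau\rceil$ equal arcs, each of length in $[\tau/2,\tau]$; since $T_1(\phi)=(\sin\phi,\cos\phi)$ parametrizes the unit circle and the chord length is at most the arc length, the required bounds follow with $c(1),C(1)$ absolute constants.

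For the inductive step, I would first partition the range $[0,\pi]$ of $\theta_{d-1}$ into $N := \lceil \pi/\tau\rceil$ equal intervals $I_l$ of length $\tau'\in[\tau/2,\tau]$. Setting $s_l := \max_{\theta\in I_l}\sin\theta$, I would choose an \emph{effective scale}
\[ \tau_l := \min\bigl(\tfrac12,\,\tau/s_l\bigr)\in(0,1), \]
and apply the inductive hypothesis with parameter $\tau_l$ to produce a partition of $\Omega_{d-1}$ into boxes $D_{l,j}$ satisfying $\diam(T_{d-1}(D_{l,j}))\le C(d-1)\tau_l$, $\mu_{d-1}(D_{l,j})\ge c(d-1)\tau_l^{d-1}$ and with sides smaller than $1$. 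The partition of $\Omega_d$ is then the family $\{I_l\times D_{l,j}\}_{l,j}$, with the side-length bound $<1$ inherited automatically from $\tau'\le\tau<1$ and the inductive hypothesis.

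The diameter bound on $\S^d$ splits as $|\cos\theta_{d-1}^{(1)}-\cos\theta_{d-1}^{(2)}|\le \tau$ for the last coordinate and, for the other $d$ coordinates, $|\sin\theta_{d-1}^{(1)}-\sin\theta_{d-1}^{(2)}| + s_l\cdot \diam(T_{d-1}(D_{l,j}))\le (1+C(d-1))\tau$ by the choice of $\tau_l$. The measure bound $\mu_d(I_l\times D_{l,j})\gtrsim\tau^d$ would be verified in two cases. When $s_l\ge 2\tau$ (so $\tau_l=\tau/s_l$), the $1$-Lipschitz continuity of $\sin$ gives $\min_{I_l}\sin\ge s_l/2$, whence $\int_{I_l}\sin^{d-1}\theta\,d\theta\ge (s_l/2)^{d-1}(\tau/2)$, and multiplication by $c(d-1)\tau_l^{d-1}=c(d-1)(\tau/s_l)^{d-1}$ gives a total at least $c(d)\tau^d$. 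When $s_l<2\tau$ (so $\tau_l=\tfrac12$), the interval $I_l$ lies within $O(\tau)$ of a pole, and the pointwise bound $\sin\theta\ge(2/\pi)\theta$ on $[0,\pi/2]$ (and its analogue near $\pi$) yields $\int_{I_l}\sin^{d-1}\theta\,d\theta\ge c(d)\tau^d$ directly, giving the required lower bound.

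Finally, to bound $K=\sum_l K_l$, I would use $K_l\le C(d-1)\tau_l^{-(d-1)}$; in the case $\tau_l=\tau/s_l$ the contribution is $C(d-1)\tau^{-(d-1)}\sum_l s_l^{d-1}$, and since $\sum_l s_l^{d-1}\tau'$ is comparable to the Riemann sum for $\int_0^\pi \sin^{d-1}\theta\,d\theta=O_d(1)$ while $\tau'\ge\tau/2$, this yields $O_d(\tau^{-d})$; the case $\tau_l=\tfrac12$ contributes only $O_d(1)$ slabs with $O_d(1)$ boxes each. The main obstacle I expect is keeping all constants uniform near the poles, where $\sin\theta_{d-1}$ degenerates and the naive scaling breaks down; the two-case definition of $\tau_l$ together with the elementary pointwise bound $\sin\theta\ge(2/\pi)\theta$ is precisely what makes all three estimates—diameter, measure, and box count—uniform in $\tau$ and captures the right dimensional dependence.
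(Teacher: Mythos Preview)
Your proposal is correct and follows essentially the same inductive scheme as the paper: partition the $\theta_{d-1}$-range into $\lceil\pi/\tau\rceil$ equal subintervals, attach to each a rescaled partition of $\Omega_{d-1}$, and bound diameter and measure of the resulting product boxes. The only cosmetic differences are that the paper rescales by $r:=\sin(\text{midpoint of }I_l)$ rather than your $s_l:=\max_{I_l}\sin$, and that the paper packages the measure lower bound into a single inequality $\int_{I_l}\sin^{d-1}\theta\,d\theta\ge \tilde c(d)r^{d-1}\tau$ together with $r\tau'\ge c\tau$, whereas you split into the two cases $s_l\ge 2\tau$ and $s_l<2\tau$; since $r$ and $s_l$ are comparable on any subinterval of $[0,\pi]$ of length~$<1$, the two arguments are equivalent.
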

\begin{proof}
We proceed by induction. For $d=1$, we partition $\Omega_1=[0,2\pi]$ into $\lceil\frac{2\pi}{\tau}\rceil$ intervals of length $2\pi/\lceil\frac{2\pi}{\tau}\rceil<1$. It is straightforward to see that the required properties hold. Assume that the proposition holds for dimension $d-1$. We will construct boxes $D_1, \ldots, D_K$ satisfying the required properties for dimension $d$. First partition $[0,\pi]$ into $m:=\lceil\frac{\pi}{\tau}\rceil$ length $\frac{\pi}{m}<1$ intervals $(I_i)_{i=0}^{m-1}$ (overlapping in their end points) by $I_i:=[a_i,a_{i+1}]$ with $a_i:=i\frac{\pi}{m}$. For each $0\le i\le m-1$ we will define a set $\CC_i$ of boxes of the form $\tilde{D}\times I_i$ where $\tilde{D}\subseteq \Omega_{d-1}$ is a box. Then $D_1,\ldots, D_K$ will be the union of all of the $\CC_i$.

Fix $0\le i\le m-1$ and define 
$r:=\sin(\frac{a_i+a_{i+1}}{2})$ and $\tau':=\min(\frac{\tau}{r},\frac{1}{2})$. We then have for any $\alpha>0$,
\begin{equation}\label{r_tau_prime_ineq}
\begin{split}
&\max_{a_i\le \theta\le a_{i+1}}\sin(\theta)\le Cr,\\
&\int_{a_i}^{a_{i+1}} \sin^{\alpha}(\theta)d\theta\ge \tilde{c}(\alpha)r^{\alpha}\tau,\\
&c\tau\le r\tau'\le \tau
\end{split}
\end{equation}
for some $C,c>0$ independent of all other parameters and $\tilde{c}(\alpha)>0$ depending only on $\alpha$. Indeed, the first inequality follows from the facts that $0\le a_{i}<a_{i+1}\le \pi$ and $\sin$ is non-negative, concave and continuously differentiable on this interval. The second follows from these facts and $|a_{i+1}-a_i|=\frac{\pi}{m}\ge \frac{\pi}{\frac{\pi}{\tau}+1}\ge \frac{3}{4}\tau$. Finally, the RHS of the third inequality follows directly from the definition of $\tau'$ and the LHS relies on the facts that $\sin x\ge \frac{5}{6}x$ for $0\le x\le 1$ and $|a_{i+1}-a_i|\ge\frac{3}{4}\tau$ to deduce that $r\ge\frac{15}{48}\tau$, from which the inequality follows.

Using the induction hypothesis, let $\tilde{D}_1, \ldots, \tilde{D}_{\tilde{K}}$ be the partition of $\Omega^{d-1}$ which satisfies the proposition for $\tau'$. The set $\CC_i$ is the set $(\tilde{D}_j\times I_i)_{j=1}^{\tilde{K}}$. Fix $1\le j\le \tilde{K}$ and let $D:=\tilde{D}_j\times I_i$. It remains to check that $\diam(T(D))\le C(d)\tau$ and $\mu_d(D)\ge c(d)\tau^d$ for some $C(d),c(d)>0$ independent of $\tau$. To check the former, note that $T(D)=\{(\sin(\theta_{d-1})T(\tilde{D}_j), \cos(\theta_{d-1}))\ |\ a_i\le \theta_{d-1}\le a_{i+1}\}$ and hence by the triangle inequality, the induction hypothesis and \eqref{r_tau_prime_ineq}, we have
\begin{multline*}
\diam(T(D))\le \max_{a_i\le \theta_{d-1}\le a_{i+1}}\sin(\theta_{d-1})\diam(T(\tilde{D}_j))+\cos(a_i)-\cos(a_{i+1})\le\\
\le Cr\cdot C(d-1)\tau'+C\tau\le C(d)\tau.
\end{multline*}
To check the second bound, note that by the product structure of the measure and \eqref{r_tau_prime_ineq}, we have
\begin{equation*}
\mu_d(D) = \mu_{d-1}(\tilde{D}_j)\int_{a_i}^{a_{i+1}} \sin^{d-1}(\theta_{d-1})d\theta_{d-1}\ge c(d-1)(\tau')^{d-1}\cdot\tilde{c}(d-1)r^{d-1}\tau\ge c(d)\tau^d
\end{equation*}
as required.
\end{proof}
For the subsets $\{E_i\}$ of Theorem~\ref{special_sphere_cubature} we take $E_i:=T(D_i)$ where $\{D_i\}_{i=1}^K$ are the boxes of Proposition~\ref{sphere_partition_to_rectangles} (with the same $d$ and $\tau$ as in the theorem). For the rest of the proof fix $1\le i\le K$ and, for brevity, denote $D:=D_i$ and $E:=E_i$. Let $h:\R^{d+1}\to\R$ be defined by $h(z):=z^\alpha$ for a multi-index $\alpha$ with $|\alpha|\le k$. Since $D$ is a box, we may write $D:=J\times I_1\times\cdots\times I_{d-1}$ (these $\{I_i\}$ are different from those used in the proof of Proposition~\ref{sphere_partition_to_rectangles}). Note that
\begin{multline}\label{polynomial_transformation_to_sphere}
\int_E h(z)d\sigma_d(z) = \int_D h(T(\ang_d))d\mu_{d}(\ang_{d}) =\\
=\int_D \sin^{\alpha_1}(\phi) \cos^{\alpha_2}(\phi) d\phi\prod_{q=1}^{d-1}\sin^{\sum_{j=1}^{q+1}\alpha_j}(\theta_q) \cos^{\alpha_{q+2}}(\theta_q) \sin^q(\theta_q) d\theta_q=\\
=\int_J \sin^{\alpha_1}(\phi) \cos^{\alpha_2}(\phi) d\phi\prod_{q=1}^{d-1}\int_{I_q}\sin^{q+\sum_{j=1}^{q+1}\alpha_j}(\theta_q) \cos^{\alpha_{q+2}}(\theta_q) d\theta_q.
\end{multline}
We begin the construction of our cubature formula by constructing quadratures for the intervals $J$ and $I_q$.

\begin{lemma}\label{one_dim_quadrature_for_sine_lemma}
Given $0\le \gamma\le 1$, integers $k\ge 1$, $0\le q\le d-1$ and any $0\le \sigma_1<\sigma_2\le \tau_q$, where $\tau_0:=2\pi$ and $\tau_q:=\pi$ for $1\le q\le d-1$, and such that $\sigma_2-\sigma_1\le 1$, let $m\ge 1$ be the minimal integer such that $\left(\frac{ke}{m+1}\right)^{m+1}\le \frac{\gamma}{2}$. Then there exists $C=C(d)$ such that for any integer $n\ge C^m$ there exist $(y_j)_{j=1}^n\subseteq(\sigma_1,\sigma_2)$ satisfying
\begin{equation}\label{one_dim_quadrature_for_sine_estimate}
\left|\frac{1}{\int_{\sigma_1}^{\sigma_2} \sin^q(\theta)d\theta}\int_{\sigma_1}^{\sigma_2} \sin^{k_1+q}(\theta)\cos^{k_2}(\theta)d\theta - \frac{1}{n}\sum_{j=1}^n \sin^{k_1}(y_j)\cos^{k_2}(y_j)\right|\le\gamma
\end{equation}
for all integers $k_1, k_2\ge 0$ such that $k_1+k_2\le k$.
\end{lemma}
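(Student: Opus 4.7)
The plan is to reduce \eqref{one_dim_quadrature_for_sine_estimate} to an ordinary polynomial Chebyshev-type quadrature via Taylor approximation, and then invoke Theorem~\ref{abs_cont_upper_bound_thm}(1). Set $y_0 := (\sigma_1+\sigma_2)/2$, $L := \sigma_2 - \sigma_1 \le 1$, $I_0 := \int_{\sigma_1}^{\sigma_2}\sin^q(\theta)\,d\theta$, and let $\nu$ be the probability measure on $(\sigma_1,\sigma_2)$ with density $\sin^q/I_0$. Euler's formula expresses each $f(y) := \sin^{k_1}(y)\cos^{k_2}(y)$ with $k_1+k_2 \le k$ as a trigonometric polynomial $\sum_{|j|\le k} c_j e^{ijy}$ satisfying $\sum_j |c_j| \le 1$, so $|f^{(\ell)}(y)|\le k^\ell$ for every $\ell \ge 0$. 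Taylor expanding $f$ around $y_0$ to order $m$ gives $f(y) = P_f(y-y_0) + R_f(y)$ with $P_f$ a polynomial of degree $\le m$ and, by Lagrange's remainder together with $(m+1)!\ge ((m+1)/e)^{m+1}$ and $L\le 1$,
\begin{equation*}
|R_f(y)|\;\le\;\frac{k^{m+1}}{(m+1)!}\left(\frac{L}{2}\right)^{m+1}\;\le\;\frac{1}{2^{m+1}}\left(\frac{ke}{m+1}\right)^{m+1}\;\le\;\frac{\gamma}{4},
\end{equation*}
the last inequality by the defining condition on $m$.

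Rescaling via $u := (\theta-\sigma_1)/L$ pushes $\nu$ to a probability measure $\tilde\nu$ on $[0,1]$ with density $g(u) := L\sin^q(\sigma_1+Lu)/I_0$. If I can produce $(u_j)_{j=1}^n \subseteq (0,1)$ satisfying $\frac{1}{n}\sum_j u_j^i = \int u^i g(u)\,du$ for every $1\le i\le m$, then setting $y_j := \sigma_1 + Lu_j \in (\sigma_1,\sigma_2)$ makes all moments of $(y_j - y_0)^i$ for $1 \le i \le m$ agree with those of $\nu$; the polynomial part $P_f(y-y_0)$ is then integrated exactly, the two remainder terms each contribute at most $\gamma/4$, and the total error is at most $\gamma/2 < \gamma$. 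Such $(u_j)$ exist for $n \ge \lceil 75 e^4\, m M (12eM)^{m-1}\rceil$ by Theorem~\ref{abs_cont_upper_bound_thm}(1) applied to $\tilde\nu$ at degree $m$, provided $g$ is essentially bounded by some $M = M(d)$.

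The key technical step, and where I expect the main work, is the uniform density bound $g \le M(d)$, equivalently $\max_{[\sigma_1,\sigma_2]}\sin^q \le M(d)\cdot\mathrm{avg}_{[\sigma_1,\sigma_2]}\sin^q$. For $q = 0$ one has $g\equiv 1$. For $q \ge 1$, using the symmetry $\sin(\pi-\theta)=\sin\theta$, assume $\sigma_2 \le \pi/2$ or that $[\sigma_1,\sigma_2]$ straddles $\pi/2$. In the first case, $\sin\theta\ge(2/\pi)\theta$ on $[0,\pi/2]$ gives $I_0 \ge (2/\pi)^q(\sigma_2^{q+1}-\sigma_1^{q+1})/(q+1)$ while $\max\sin^q\le\sigma_2^q$, so writing $r := \sigma_1/\sigma_2 \in[0,1]$,
\begin{equation*}
\frac{\max\sin^q\cdot L}{I_0}\;\le\;(q+1)\Bigl(\tfrac{\pi}{2}\Bigr)^q\cdot\frac{1-r}{1-r^{q+1}}\;=\;\frac{(q+1)(\pi/2)^q}{1+r+\cdots+r^q}\;\le\;(q+1)\Bigl(\tfrac{\pi}{2}\Bigr)^q.
\end{equation*}
In the second case, $\min_{[\sigma_1,\sigma_2]}\sin\ge\cos L\ge\cos 1$, so $\max/\mathrm{avg}\le(\cos 1)^{-q}$. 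Since $q\le d-1$, combining gives $M(d) = O\!\bigl(d(\pi/2)^{d-1}\bigr)$, which through Theorem~\ref{abs_cont_upper_bound_thm}(1) yields the claimed $n \ge C(d)^m$. A small final subtlety is that the lemma requires $(y_j)\subseteq(\sigma_1,\sigma_2)$ open; this can be arranged by applying Theorem~\ref{one_dim_quadrature_thm} to a rescaling of $g$ to a slightly contracted interval $[\epsilon,1-\epsilon]$ and using the moment slack of \eqref{p_condition} to absorb the introduced perturbation, in the spirit of Remark~\ref{main_theorem_extensions_remark}(1).
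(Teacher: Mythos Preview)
Your proof is correct and follows essentially the same route as the paper: Taylor-expand $\sin^{k_1}\cos^{k_2}$ to degree $m$, bound the remainder by $(ke/(m+1))^{m+1}\le\gamma/2$, then invoke Theorem~\ref{abs_cont_upper_bound_thm}(1) on the rescaled $\sin^q$-weight to handle the polynomial part, the key point being that the rescaled density is bounded by a constant depending only on $q\le d-1$. The only differences are cosmetic---the paper expands about $\sigma_1$ and bounds the remainder via Cauchy estimates rather than your direct derivative bound, and it simply asserts $M\le C(q)$ without the case analysis you supply; your treatment of the open-interval requirement via Remark~\ref{main_theorem_extensions_remark}(1) is likewise a detail the paper glosses over.
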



We first show how to use the lemma, then give its proof.

\begin{proof}[Proof of Theorem \ref{special_sphere_cubature}]
Denote $I_0:= J$. Using the lemma, for each $0\le q\le d-1$, let $(y_{q,i,j})_{j=1}^n$ be the $(y_j)$ satisfying $\eqref{one_dim_quadrature_for_sine_estimate}$ for the given $k$ and for $\gamma:=\frac{\delta}{d2^k}$. Let $(x_{i,j})_{j=1}^{n^d}$ be the Cartesian product $(y_{0,i,j})_{j=1}^n\times\cdots\times (y_{d-1,i,j})_{j=1}^n$. Finally let $(z_{i,j})_{j=1}^{n^d}$ be defined by $z_{i,j}:=T(x_{i,j})$. Note that for any $h$ as in \eqref{polynomial_transformation_to_sphere}, we have by \eqref{polynomial_transformation_to_sphere}, \eqref{one_dim_quadrature_for_sine_estimate} and using that $|\sin(\theta)|\le 1$ and $|\cos(\theta)|\le 1$ that
\begin{equation*}
\left|\frac{1}{\sigma_d(E)}\int_E h(z)d\sigma_d(z)-\frac{1}{n^d}\sum_{j=1}^{n^d} h(z_{i,j})\right| \le d\gamma.
\end{equation*}
To finish the proof of the theorem, it remains to show that the $z_{i,j}$ provide an approximate Chebyshev-type quadrature also for $g(z)$ of the form $g(z)=(z-w)^\alpha$ for $w\in \S^d$ and a multi-index $\alpha$ with $|\alpha|\le k$. Fix such a $g$. For a multi-index $\beta\in(\N\cup\{0\})^{d+1}$ we write $\beta\preceq \alpha$ if $\beta_q\le \alpha_q$ for all $q$. Then
\begin{multline*}
\left|\frac{1}{\sigma_d(E)}\int_E g(z)d\sigma_d(z)-\frac{1}{n^d}\sum_{j=1}^{n^d} g(z_{i,j})\right|=\\
=\left|\frac{1}{\sigma_d(E)}\int_E \sum_{\beta\preceq\alpha} z^\beta(-w)^{\alpha-\beta}\prod_{q=1}^{d+1} \binom{\alpha_q}{\beta_q}d\sigma_d(z)-\frac{1}{n^d}\sum_{j=1}^{n^d} \sum_{\beta\preceq\alpha} z_{i,j}^\beta(-w)^{\alpha-\beta}\prod_{q=1}^{d+1} \binom{\alpha_q}{\beta_q}\right|\le\\
\le\sum_{\beta\preceq\alpha} d\gamma\prod_{q=1}^{d+1} \binom{\alpha_q}{\beta_q}= d\gamma 2^{|\alpha|}\le d\gamma 2^k=\delta
\end{multline*}
which completes the proof of the theorem.
\end{proof}

\begin{proof}[Proof of Lemma~\ref{one_dim_quadrature_for_sine_lemma}]
Fix $\gamma>0$, integers $k\ge 1, 0\le q\le d-1$ and $k_1,k_2\ge 0$ such that $k_1+k_2\le k$. Fix also $0\le \sigma_1 < \sigma_2\le \tau_q$ satisfying $\sigma_2 - \sigma_1\le 1$. Let $f(\theta):=\sin^{k_1}(\theta)\cos^{k_2}(\theta)$ and write
\begin{equation*}
f(\theta)=\sum_{i=0}^m a_i(\theta-\sigma_1)^i + r_{m}(\theta)(\theta-\sigma_1)^{m+1}
\end{equation*}
the Taylor expansion with remainder term of $f(\theta)$ up to degree $m$. Recall that $r_m(\theta)=\frac{f^{(m+1)}(\tilde{\theta})}{(m+1)!}$ where for $\theta\ge \sigma_1$, $\tilde{\theta}$ is some number in $(\sigma_1, \theta)$. By the Cauchy estimates, we have for any $\rho>0$, 
\begin{equation*}
|r_m(\theta)|\le \rho^{-(m+1)}\max_{\substack{|z|=\rho\\\tilde{\theta}\in(\sigma_1,\theta)}}|f(z+\tilde{\theta})|\le\rho^{-(m+1)}\max_{\substack{|z|=\rho\\\tilde{\theta}\in(\sigma_1,\theta)}}|e^{i(k_1+k_2)(z+\tilde{\theta})}|\le \rho^{-(m+1)}e^{k\rho}.
\end{equation*}
Choosing $\rho=\frac{m+1}{k}$ we obtain $|r_m(\theta)|\le \left(\frac{ke}{m+1}\right)^{m+1}$. We thus choose $m\ge 1$ to be minimal such that $\left(\frac{ke}{m+1}\right)^{m+1}\le \frac{\gamma}{2}$. If we now find $(y_j)_{j=1}^n$ satisfying
\begin{equation}\label{quadrature_for_sine_weight}
\frac{1}{n}\sum_{j=1}^n y_j^s = \frac{1}{\int_{\sigma_1}^{\sigma_2} \sin^q(\theta)d\theta}\int_{\sigma_1}^{\sigma_2} \theta^s \sin^q(\theta)d\theta
\end{equation}
for all integer $0\le s\le m$, then these $(y_j)$ will satisfy the requirements of the lemma since
\begin{multline*}
\left|\frac{1}{\int_{\sigma_1}^{\sigma_2} \sin^q(\theta)d\theta}\int_{\sigma_1}^{\sigma_2} f(\theta)\sin^q(\theta)d\theta - \frac{1}{n}\sum_{j=1}^n f(y_j)\right|\le\\
\le \left|\frac{1}{\int_{\sigma_1}^{\sigma_2} \sin^q(\theta)d\theta}\int_{\sigma_1}^{\sigma_2} r_m(\theta)(\theta-\sigma_1)^{m+1}\sin^q(\theta)d\theta-\frac{1}{n}\sum_{j=1}^n r_m(y_j)(y_j-\sigma_1)^{m+1}\right| \le \gamma,
\end{multline*}
where we used $|\sigma_2-\sigma_1|\le 1$ and $\sin^q(\theta)\ge 0$ for $\theta\in[0,\tau_q]$ in the last inequality.
To find $(y_j)$'s satisfying \eqref{quadrature_for_sine_weight}, first scale the problem from $[\sigma_1,\sigma_2]$ to the $[0,1]$ interval. Theorem~\ref{abs_cont_upper_bound_thm} then shows that $(y_j)_{j=1}^n$ exist for any integer $n\ge 75e^4 mM\left(12eM\right)^{(m-1)}$ where $M=\frac{\sigma_2-\sigma_1}{\int_{\sigma_1}^{\sigma_2} \sin^q(\theta)d\theta}\max_{\theta\in[\sigma_1,\sigma_2]}\sin^q(\theta)$. The lemma follows since $M\le C(q)$ independently of $\sigma_1$ and $\sigma_2$.
%
\end{proof}


\end{subsubsection}
\begin{subsubsection}{Cylinder Construction}
We prove Theorem~\ref{special_cubature_on_cyl_thm} in the special case $W=1$. The general case follows from this as follows. If we want the general case with parameters $k, L',W',\tau'$ and $\delta'$ we can take the $W=1$ construction with the same $k$ and parameters $L:=\frac{L'}{W'}, W=1, \tau:=\frac{\tau'}{W'}, \delta:=\frac{\delta'}{(W')^k}$ and rescale its result by a factor of $W'$.
 

\begin{proof}[Proof of Theorem~\ref{special_cubature_on_cyl_thm} for $W=1$]
We first use Theorem~\ref{special_sphere_cubature} with its input parameters $d, k, \tau, \delta$ taken to be, in terms of the input parameters to Theorem~\ref{special_cubature_on_cyl_thm}, $d-2, k, \tau, \frac{\delta}{(4L)^k}$, respectively. We thus obtain sets $E_1,\ldots, E_{K'}\subseteq\S^{d-2}$ and, for each $1\le i\le K'$, $(z_{i,j})_{j=1}^N\subseteq E_i$, satisfying the assertions of Theorem~\ref{special_sphere_cubature}.
Next, for each $1\le i\le K'$, we define intervals $(I_{i,q})_{q=1}^{m_i}\subseteq[-\frac{3}{2}L,\frac{3}{2}L]$ by the following procedure: $I_{i,q}:= [a_{i,q-1}, a_{i,q}]$ with $a_{i,0}:=-\frac{3}{2}L$, with subsequent $a_{i,q}$'s defined by the rule 
\begin{equation}\label{I_i_q_requirement}
\int_{[a_{i,q-1},a_{i,q}]}v(x)dx=\frac{\tau^{d-1}}{\sigma_{d-2}(E_i)}, 
\end{equation}
and with the integer $m_i$ set to be the maximal one for which $[a_{i,m_i-1},a_{i,m_i}]\subseteq[-\frac{3}{2}L,\frac{3}{2}L]$. Here we recall that, by Property (1) of Theorem~\ref{special_sphere_cubature}, we have that $\frac{\tau^{d-1}}{\sigma_{d-2}(E_i)}\le C'(d)\tau$ for some $C'(d)>0$ and, thus, using that $1\le v(x)\le 2$, $\tau<1$ and our assumption that $L>C(d)$, we can, and do, take $C(d)$ sufficiently large to ensure that $m_i$ is well-defined and $a_{i,m_i}>L$ for all $i$. Note also that the $(I_{i,q})$ satisfy $\diam(I_{i,q})\le \tilde{C}(d)\tau$ for some $\tilde{C}(d)>0$ and all $i$ and $q$. Finally, we take $K:=\sum_{i=1}^{K'} m_i$ and the required sets $(D_j)_{j=1}^{K}$ to be all sets of the form $I_{i,q}\times E_i$ for $1\le i\le K'$ and $1\le q\le m_i$.

We continue by establishing properties (I) and (II). By the fact that the $(E_i)$ are disjoint up to surface measure 0, and our construction, it follows that the $(D_j)$ are disjoint up to $\nu_{2L,1}$-measure 0. Furthermore, as remarked above, we have $a_{i,m_i}>L$ for all $i$ and, thus, $\nu_{2L,1}\left(P_{L,1}\setminus\left(\cup_{j=1}^{K} D_j\right)\right)=0$, proving (I).  In addition, as noted above, $\diam(I_{i,q})\le \tilde{C}(d)\tau$ for all $i$ and $q$ and thus, by the diameter bound on the $(E_i)$ and our construction, it follows that $\diam(D_j)\le \bar{C}(d)\tau$ for some $\bar{C}(d)>0$ and all $j$. Relation \eqref{I_i_q_requirement} ensures that $\nu_{2L,1}(D_j)=\tau^{d-1}$ for all $j$ and this implies the bound on $K$ by a volume estimate. Thus, (II) is proved.


It remains to establish (III). Fix $1\le i\le K'$, $1\le q\le m_i$ and, for brevity, denote $D:=I_{i,q}\times E_i$. We start by constructing the points $(w_{D,j})_{j=1}^n$. We apply Theorem~\ref{abs_cont_upper_bound_thm} to the interval $I_{i,q}$ with the weight $v$ restricted to that interval (by rescaling the interval to $[0,1]$ and renormalizing the measure to be a probability measure) and obtain $(x_{i,q,j})_{j=1}^{n_0}\subseteq [a_{i,q-1},a_{i,q}]$ satisfying
\begin{equation}\label{quadrature_on_interval_of_cyl}
\frac{1}{n_0}\sum_{j=1}^{n_0} x_{i,q,j}^r=\frac{1}{\int_{a_{i,q-1}}^{a_{i,q}} v(x)dx}\int_{a_{i,q-1}}^{a_{i,q}} x^r v(x)dx
\end{equation}
for any integer $0\le r\le k$. Furthermore, $n_0$ may be any integer such that $n_0\ge75e^4 kM\left(12eM\right)^{(k-1)}$ where $M:=\frac{a_{i,q}-a_{i,q-1}}{\int_{a_{i,q-1}}^{a_{i,q}} v(x)dx}\max_{x\in[a_{i,q-1},a_{i,q}]}v(x)\le 2$. Hence, there exists $C>0$, independent of all other parameters, such that $n_0$ may be any integer satisfying $n_0\ge C^k$. Finally, we take the points $(w_{D,j})_{j=1}^n$ to be the Cartesian product of $(x_{i,q,j})_{j=1}^{n_0}$ and $(z_{i,j})_{j=1}^N$ (of Theorem~\ref{special_sphere_cubature}). This implies $n:=n_0 N$, where $N=n_1^{d-2}$ and $n_1$ can be any integer satisfying $n_1\ge C_1(d)^{m_0(d-2,k,\delta/(4L)^k)}$ for some $C_1(d)>0$. Note that $m_0(d-2,k,\delta/(4L)^k)\ge k$ (using that $L>C(d)$ and taking $C(d)>1$), implying that $n$ can be any integer of the form $n_2^{d-1}$ for $n_2\ge C_2(d)^{m_0(d-2,k,\delta/(4L)^k)}$, for some $C_2(d)>0$, as required. 

It remains only to verify the approximate quadrature condition of (III). Fix $h:\R^d\to\R$, $h(w):=(w-y)^{\alpha}$ for $y\in P_{2L,1}$ and a multi-index $\alpha$ with $|\alpha|\le k$. Let $\tilde{h}:\R^{d-1}\to\R$ be defined by $\tilde{h}(\tilde{w}):=(\tilde{w}-\tilde{y})^{\tilde{\alpha}}$, where $\tilde{y}\in R^{d-1}$ is defined by $\tilde{y}_i:=y_{i+1}$ (so that $\tilde{y}\in\S^{d-2})$ and $\tilde{\alpha}\in(\N\cup\{0\})^{d-1}$ is defined by $\tilde{\alpha}_i:=\alpha_{i+1}$. Using the product structure of the $(w_{D,j})$, $D$, $\nu_{2L,1}$ and $h$, and \eqref{quadrature_on_interval_of_cyl} and \eqref{approx_quadrature_on_sphere}, we obtain
\begin{equation*}
\begin{split}
&\left|\frac{1}{n}\sum_{j=1}^n h(w_{D,j})-\frac{1}{\nu_{2L,1}(D)}\int_D h(w)d\nu_{2L,1}(w)\right|=\\
&=\left|\frac{1}{\int_{a_{i,q-1}}^{a_{i,q}} v(w_1)dw_1}\int_{a_{i,q-1}}^{a_{i,q}} (w_1-y_1)^{\alpha_1} v(w_1)dw_1\right|\cdot\\
&\qquad\qquad\cdot\left|\frac{1}{N}\sum_{j=1}^N \tilde{h}(z_{i,j})-\frac{1}{\sigma_{d-2}(E_i)}\int_{E_i} \tilde{h}(\tilde{w})d\sigma_{d-2}(\tilde{w})\right|\le\\
&\le\max_{w,y\in P_{2L,1}} |w_1-y_1|^{\alpha_1} \cdot\frac{\delta}{(4L)^k}\le \delta,
\end{split}
\end{equation*}
where we used that $L>C(d)$ and took $C(d)>1$ in the last inequality.\qedhere


\end{proof}
\end{subsubsection}
\end{subsection}
\end{section}
\begin{section}{Open questions}\label{open_questions_section}
\begin{enumerate}
\item What is the best possible upper bound on $n_\sigma^0(k)$ and $n_\sigma(k)$ using only the information contained in $R_\sigma$ or using other \emph{simple} properties of $\sigma$? Can the conclusion of Theorem~\ref{one_dim_quadrature_thm} be improved? In our lower bounds section, we showed that Theorem~\ref{one_dim_quadrature_thm} is sharp, up to constants, for measures with bounded density. Is this also the case for other measures?

\item For which measures $\sigma$ does $n_\sigma^0(k)$ or $n_\sigma(k)$ grow only polynomially with $k$? Can such behavior be deduced using only $\emph{simple}$ properties of $\sigma$ (i.e., without knowing the orthogonal polynomials of $\sigma$)? Is it always the case for measures on a finite interval, with a density which is bounded away from 0 and infinity?
\item Salkauskas \cite{S75} puts a probability measure on the set of length $k$ moment vectors (normalized volume measure) from which he deduces that the proportion of those vectors for which a Chebyshev quadrature (i.e., a Chebyshev-type quadrature with $n$ nodes and algebraic degree at least $n$) exists is exponentially small in $n$. Can Salkauskas' result be extended to give the typical degree of a $n$ node Chebyshev-type quadrature? Is this typical degree a power of $n$ or logarithmic in $n$?
\item Obtain quantitative theorems for \emph{random} Chebyshev-type quadratures. Do they help to show existence of Chebyshev-type quadratures? In particular, for the case of the uniform measure on the cube, what is the order of magnitude of $N_0$ in Theorem~\ref{positive_density_thm}? (see also remarks preceding the theorem)
\item Theorem~\ref{one_dim_quadrature_thm} and its extensions give upper bounds for $n_\sigma(k)$ for measures $\sigma$ supported on a finite interval. Can similar theorems be written for measures in higher dimensions? or for measures with unbounded support?
\item Theorems~\ref{one_dim_quadrature_thm} and \ref{large_atom_upper_bound_theorem} give upper bounds for $n_\sigma(k)$ in the cases when $\sigma$ has at least $k+4$ atoms, or a non-atomic part. However, we know from Theorem~\ref{CTQ_existence_theorem} that Chebyshev-type quadratures exist once we have roughly $\frac{k}{2}$ atoms. Can we bound $n_\sigma(k)$ using simple properties of $\sigma$ for measures having between $\frac{k}{2}$ and $k+4$ atoms?
\end{enumerate}
\end{section}


\end{document}